\documentclass[11pt, a4paper]{article}
\usepackage{amsfonts, amssymb, amsmath, amsthm}
\usepackage{esint}
\usepackage{latexsym}
\usepackage{mathrsfs}
\usepackage[margin=1in]{geometry}
\usepackage[colorlinks, linkcolor=blue, anchorcolor=blue, citecolor=blue]{hyperref}
\usepackage{graphicx}
\usepackage{enumerate}

\allowdisplaybreaks 

\newtheorem{theorem}{Theorem}[section]
\newtheorem{coro}{Corollary}[section]
\newtheorem{prop}{Proposition}[section]
\newtheorem{definition}{Definition}[section]
\newtheorem{lemma}{Lemma}[section]
\newtheorem{remark}{Remark}[section]
\numberwithin{equation}{section}



\date{}

\begin{document}

\title{Convergence rates in almost-periodic homogenization of higher-order elliptic systems}
\author{Yao Xu \thanks{Supported by China Postdoctoral Science Foundation (2019TQ0339).} \and Weisheng Niu \thanks{Corresponding author. Supported by the NSF of China (11701002) and the NSF of Anhui Province (1708085MA02).}}


\maketitle
\pagestyle{plain}
\begin{abstract}
This paper concentrates on the quantitative homogenization of higher-order elliptic systems with almost-periodic coefficients in bounded Lipschitz domains. For almost-periodic coefficients in the sense of H. Weyl, we establish uniform local $L^2$ estimates for the approximate correctors. Under an additional assumption \eqref{estimate_condition_rho} on the frequencies of the coefficients, we derive the existence of true correctors as well as the $O(\varepsilon)$ convergence rate in $H^{m-1}$. As a byproduct, the large-scale H\"{o}lder estimate and a Liouville theorem are obtained for higher-order elliptic systems with almost-periodic coefficients in the sense of Besicovitch. Since \eqref{estimate_condition_rho} is not well-defined for equivalence classes of almost-periodic functions in the sense of H. Weyl or Besicovitch, we provide another condition yielding the $O(\varepsilon)$ convergence rate under perturbations of the coefficients.
\end{abstract}
 \footnote[0] {AMS Subject Classification 2010: 35B27, 35J48}

\section{Introduction}

Let $\Omega$ be a bounded Lipschitz domain in $\mathbb{R}^d$. We consider the quantitative homogenization for the $2m$-order elliptic system with almost-periodic (a.p.) coefficients
\begin{equation}\label{intro_eq1}
\begin{cases}
\mathcal{L}_\varepsilon u_\varepsilon =f  &\text{ in } \Omega,\\
Tr (D^\gamma u_\varepsilon)=g_\gamma  & \text{ on } \partial\Omega, \text{ for  } 0\leq|\gamma|\leq m-1,
\end{cases}
\end{equation}
where $u_\varepsilon: \Omega \rightarrow \mathbb{R}^n$ is a vector function, \begin{equation}\label{intro_exp_L} \mathcal{L}_\varepsilon=\mathcal{L}_\varepsilon^{A}= (-1)^{m}\sum_{|\alpha|=|\beta|=m} D^\alpha(A_{ij}^{\alpha\beta}(\frac{x}{\varepsilon})D^\beta),\end{equation}
$1\leq i, j\leq n$, $\alpha, \beta,\gamma$ are multi-indexes with components $\alpha_k, \beta_k,\gamma_k, k=1,2,...,d$, and
$$ |\alpha|=\sum_{k=1}^d \alpha_k, ~~D^\alpha=D_{x_1}^{\alpha_1} D_{x_2}^{\alpha_2}\cdot\cdot\cdot D_{x_d}^{\alpha_d}. $$
Assume that the coefficient matrix $A(y)=(A_{ij}^{\alpha \beta}(y))$ is real, bounded measurable with
\begin{align}\label{intro_cond_bdd}
\|A^{\alpha \beta}_{ij}(y)\|_{L^\infty(\mathbb{R}^d)}\leq 1/\mu,
\end{align}
and satisfies the coercivity condition (the summation convention for $i, j$ is  used)
\begin{align}\label{intro_cond_el}
\sum_{|\alpha|=|\beta|=m} \int_{\mathbb{R}^d} A_{ij}^{\alpha\beta} D^\beta\phi_j D^\alpha\phi_i \geq \mu \sum_{|\alpha|=m}\|D^\alpha\phi\|^2_{L^2(\mathbb{R}^d)} ~\textrm{ for any }~ \phi\in C_c^\infty(\mathbb{R}^d; \mathbb{R}^n),
\end{align}
where $\mu>0$. We further assume that $A$ is a.p. in the sense of Besicovitch, i.e. $A\in B^2$ (see Section \ref{hom_sec_almost}).

Let $W\!A^{m,p}(\partial\Omega; \mathbb{R}^n)$ be the Whitney-Sobolev space composed of $\dot{g}=  \{g_\alpha\}_{|\alpha|\leq m-1}$, that is,
the completion of the set $\left\{\{ D^\alpha \mathscr{G}\mid_{\partial\Omega}\}_{|\alpha|\leq m-1}:  \mathscr{G}\in C_c^\infty(\mathbb{R}^d; \mathbb{R}^n ) \right\}$
with respect to norm
$$ \| \dot{g} \|_{W\!A^{m,p}(\partial\Omega)} =\sum_{|\alpha|\leq m-1} \|g_\alpha\|_{L^p(\partial \Omega)} + \sum_{|\alpha|=m-1} \|\nabla_{tan} D^\alpha g_\alpha\|_{L^p(\partial \Omega)}.$$
It is known that, for any $\dot{g}\in W\!A^{m, 2}(\partial\Omega; \mathbb{R}^n)$ and $f\in H^{-m}(\Omega;\mathbb{R}^n)$, problem (\ref{intro_eq1}) admits a unique weak solution $u_\varepsilon\in H^m(\Omega;\mathbb{R}^n)$ such that
\begin{gather*}
\int_\Omega \sum_{|\alpha|=|\beta|=m} D^\alpha v_i A_{ij}^{\alpha \beta}(x/\varepsilon)D^\beta u_{\varepsilon, j} dx= \langle f, v\rangle, ~~\forall~v\in H_0^m(\Omega;\mathbb{R}^n ),\\
\|u_\varepsilon\|_{H^m(\Omega)} \leq C \left\{\|f\|_{H^{-m}(\Omega)} +\|\dot{g}\|_{W\!A^{m,2}(\partial\Omega)}\right\}.
\end{gather*}
Thanks to the qualitative homogenization result in Section \ref{sec_hom}, $u_\varepsilon$ converges weakly in $H^m(\Omega; \mathbb{R}^n)$ to $u_0$, solving
\begin{equation}\label{intro_eq_hom}
\begin{cases}
\mathcal{L}_0 u_0 =f  &\text{ in } \Omega,\\
Tr (D^\gamma u_0)=g_\gamma  & \text{ on } \partial\Omega, \text{ for  } 0\leq|\gamma|\leq m-1,
\end{cases}
\end{equation}
where $\mathcal{L}_0$ is a $2m$-order elliptic operator with constant coefficients depending only on $A$. The primary aim of this paper is to establish the convergence rate of $u_\varepsilon$ to $u_0$.

Homogenization of elliptic equations or systems with a.p. coefficients goes back to \cite{Kozlov1979}, where the qualitative result was obtained for second-order elliptic operators with a.p. coefficients, as well as the sharp convergence rate in $C(\overline{\Omega})$ for operators with sufficiently smooth quasiperiodic coefficients under proper assumptions on the spectrum of $A$. Afterwards, homogenization of linear or nonlinear operators involving a.p. coefficients was further studied by many authors in different contexts (see e.g. \cite{Dungey2001, Bondarenko2005, Lions2005, Caffarelli2010}). Recently, in \cite{Shen2015_Convergence} Z. Shen investigated uniform H\"{o}lder estimates and the convergence rate for second-order elliptic systems with uniformly a.p. coefficients. Based on the convergence rates, the uniform interior and boundary Lipschitz estimates were then obtained by S.N. Armstrong and Z. Shen in \cite{Armstrong2016_Lipschitz}. Further investigation was then carried out by S.N. Armstrong et al. in \cite{Armstrong2016_Bounded}, where they derived the uniform boundedness of approximate correctors and the existence of true correctors using a brilliant quantitative ergodic theorem. More recently, Z. Shen and J. Zhuge in \cite{Shen2018_Approximate, Zhuge2017_Uniform} conducted a very comprehensive study on the homogenization of second-order elliptic systems with a.p. coefficients in the sense of H. Weyl (denoted by $APW^2$), a broader class of a.p. functions than uniformly a.p. functions. Under proper assumptions, the sharp convergence rate, the existence of true correctors as well as the uniform Lipschitz estimate were established.

Quantitative homogenization for higher-order elliptic equations, even in the periodic case, is less understood until very recently. In \cite{Kukushkin2016, Pastukhova2016, Pastukhova2017}, the $O(\varepsilon)$ convergence rate in $L^2$ was established for higher-order elliptic equations with periodic coefficients on the whole space, while in \cite{Suslina2017_Dirichlet, Suslina2018_Neumann} similar results were obtained for more general higher-order systems with Dirichlet or Neumann boundary data in bounded $C^{2m}$ domains. More recently, in \cite{Niu2018_Convergence} Z. Shen and the authors of the present paper investigated the convergence rate in periodic homogenization of higher-order elliptic systems with symmetric coefficients in Lipschitz domains, where the $O(\varepsilon)$ convergence rate in $W^{m-1, 2d/(d-1)}$ and some uniform interior estimates were obtained. Without the symmetry assumption, a suboptimal convergence rate, as well as some uniform boundary estimates, was obtained in \cite{Niu2019_Boundary}. By now, very little is known about quantitative homogenization of higher-order elliptic equations or systems with a.p. coefficients. This motivates the study of the present paper.

Compared to the periodic setting, the main obstacle in the study of quantitative homogenization in a.p. setting is that the equations for correctors may not be solvable. Therefore, following the idea in \cite{Shen2018_Approximate}, we introduce the so-called approximate corrector $\chi_T$ by
\begin{equation*}
\mathcal{L}_1 \chi_T+T^{-2m}\chi_T=-\mathcal{L}_1 P~\quad\text{in}~\mathbb{R}^d,
\end{equation*}
where $T>0$ and $P$ is a monomial of degree $m$ (see Section \ref{sec_appcor}). Our first result concerns the uniform estimates on $\chi_T$, which plays an essential role in the study of convergence rate.

\begin{theorem}\label{estimate_thm_1}
Suppose that $A\in APW^2$ and satisfies \eqref{intro_cond_bdd}--\eqref{intro_cond_el}. Fix $k\geq 1$ and $\sigma\in(0, 1)$. Then for any $T\geq 2$,
\begin{gather}
\|\nabla^m \chi_T\|_{S^2_1}\leq C_\sigma T^\sigma,\label{estimate_es_m_chi}\\
\|\nabla^l\chi_T\|_{S^2_1}\leq C_\sigma\int_1^T t^{m-l-1}\inf_{1\leq L\leq t}\bigg\{\rho_k(L, t)+\exp\Big(-\frac{ct^2}{L^2}\Big)\bigg\}\Big(\frac{T}{t}\Big)^\sigma dt,\label{estimate_es_chi_S1}
\end{gather}
$0\leq l\leq m-1$, where $\nabla^l u$ denotes the vector $(D^\alpha u)_{|\alpha|=l}$, $C_\sigma$ depends only on $d, m, n, k, \sigma$ and $A$, and $c$ depends only on $d$ and $k$.
\end{theorem}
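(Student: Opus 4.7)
The plan is to extend the scheme that Shen--Zhuge developed for second-order systems in $APW^2(\R^d)$ to the $2m$-order framework. Throughout, I think of $\|\cdot\|_{S^2_1}$ as the supremum over translates of the unit-ball $L^2$ average, so the game is: (i) start from a natural scale-$T$ energy bound, (ii) descend to the unit scale at a controlled cost $T^\sigma$, and (iii) for lower-order derivatives, extract quantitative ergodicity through the modulus $\rho_k(L,t)$. Step (i) is obtained by testing $\LL_1\chi_T+T^{-2m}\chi_T=-\LL_1 P$ against $\eta^{2m}\chi_T$, with $\eta$ a standard cutoff on $B(x_0,2T)$: coercivity \eqref{intro_cond_el} absorbs the top-order term, \eqref{intro_cond_bdd} together with $|\nabla^m P|\le C$ controls the forcing, and the zero-order mass term combines with itself to yield
\[
T^{-2m}\fint_{B(x_0,T)}|\chi_T|^2+\fint_{B(x_0,T)}|\nabla^m\chi_T|^2\le C
\]
uniformly in $x_0\in\R^d$. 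This is the higher-order analogue of the first \emph{a priori} bound in \cite{Shen2017_Approximate}.

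For \eqref{estimate_es_m_chi}, I would prove a Caccioppoli-type inequality
\[
\fint_{B(x,r)}|\nabla^m\chi_T|^2\le\frac{C}{r^{2m}}\fint_{B(x,2r)}|\chi_T-Q|^2+C+\frac{C}{T^{2m}}\fint_{B(x,2r)}|\chi_T|^2
\]
valid for $1\le r\le T$ and an optimally chosen polynomial $Q$ of degree $\le m-1$, then combine it with a polynomial-subtracted Poincar\'e inequality and iterate dyadically from scale $T$ down to scale $1$. The resulting Meyers-type iteration produces the loss of $T^\sigma$ for any prescribed $\sigma\in(0,1)$.

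For \eqref{estimate_es_chi_S1} I would work with the dyadic family $\{\chi_t\}_{1\le t\le T}$ and, at each scale $t$, compare $\chi_t$ with an approximant built by replacing $A$ by its Bohr truncation at frequency-scale $L\le t$. The coefficient-approximation error is then controlled by $\rho_k(L,t)$, while the localization error when restricting to $B(x_0,t)$ decays like $\exp(-ct^2/L^2)$ thanks to the exponential Green's-function decay of $\LL_1+t^{-2m}\mathrm{Id}$ at distances beyond $t$. Integrating the resulting differential inequality in $t$ on $[1,T]$, and using Poincar\'e/Sobolev estimates at scale $t$ to descend from $\nabla^m$ to $\nabla^l$---each dropped derivative costing one factor of $t$, giving the factor $t^{m-l-1}$ after the $dt$-integration---produces the stated integral, with the $(T/t)^\sigma$ weight reflecting the same Meyers slack as in the previous step.

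The main obstacle is this last step. In the second-order setting one works directly with $\nabla\chi_t$, but here the Bohr-truncation comparison must be carried out at the top derivative level, where coercivity lives, and then one must invert a chain of Poincar\'e inequalities to reach $\nabla^l$, losing exactly one power of $t$ per derivative. Arranging this so that no extraneous logarithmic factor appears, and so that the infimum over $L$ can be taken pointwise under the $dt$-integral, is where most of the care is needed; steps (i) and (ii) above are direct, if technical, analogues of the second-order computations.
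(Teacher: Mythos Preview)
Your outline captures the coarse structure (scale-$T$ energy bound, descent to scale $1$, ergodicity via $\rho_k$), but the two key mechanisms you propose are not the ones that actually drive the proof, and as written they would not succeed.

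For \eqref{estimate_es_m_chi}: a Caccioppoli inequality plus polynomial-subtracted Poincar\'e, iterated dyadically, does \emph{not} by itself yield a $(T/r)^\sigma$ estimate. A Meyers/Gehring iteration improves the integrability exponent locally but gives you no decay in the radius. What makes the descent work is a one-step improvement lemma (Lemma~\ref{holder_lem_onestep}) proved by the Avellaneda--Lin compactness argument: if the estimate failed along a sequence $\ve_l\to0$, one extracts a limit solving $\LL_0u=0$ and uses the interior Lipschitz estimate for the constant-coefficient operator to reach a contradiction. The homogenization structure enters here and nowhere else in the proof of \eqref{estimate_es_m_chi}; your scheme never invokes $\LL_0$, so the iteration has nothing to feed on.

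For \eqref{estimate_es_chi_S1}: the paper does not work with a family $\{\chi_t\}_{1\le t\le T}$ or with Bohr truncations of $A$. One applies a quantitative ergodic theorem (Theorem~\ref{ergodic_thm_ergodic}) to the single function $f=\nabla^l\chi_T$: convolve with the heat kernel $\Phi_t$, write $\|f\|_{S^2_1}$ as a time integral of $\|\partial_t^n u(\cdot,t)\|_\infty$, and bound the integrand by the $k$-th order difference modulus $\omega_k(\nabla^m\chi_T;L,t)$ plus an exponential tail. The tail $\exp(-ct^2/L^2)$ is the Gaussian decay of the \emph{heat kernel} (Lemma~6.2), not the exponential decay of the elliptic Green's function of $\LL_1+t^{-2m}\mathrm{Id}$; the latter decays like $\exp(-c|x|/t)$ and would give the wrong exponent. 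The link to $\rho_k(L,t)$ is then purely through higher-order differences of $\chi_T$, controlled by differences of $A$ via Lemma~\ref{estimate_lem_difference} and the large-scale H\"older estimate (Corollary~\ref{estimate_es_om_rho}); there is no frequency truncation. The factor $t^{m-l-1}$ and the pointwise infimum over $L$ come out automatically from the structure of Theorem~\ref{ergodic_thm_ergodic}, not from a chain of Poincar\'e inequalities applied after the fact.
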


We mention that in Theorem \ref{estimate_thm_1} we have used the notation
\begin{align*}\|u\|_{S^p_R}:=\sup_{x\in \mathbb{R}^d}\Big(\fint_{B(x, R)}|u|^p\Big)^{1/p},
\end{align*}
and the quantity $\rho_k(L, R)$ measuring the frequencies of $A$ defined by \eqref{def_rho}. Our next two theorems provide the existence of true correctors and the optimal convergence rate under proper assumptions on $A$.

\begin{theorem}\label{estimate_thm_2}
Suppose that $A\in APW^2$ and satisfies \eqref{intro_cond_bdd}--\eqref{intro_cond_el}. Suppose further that there exist some $k\geq1$ and $\theta>m$ such that
\begin{align}\label{estimate_condition_rho}
\rho_k(L, L)\leq CL^{-\theta} \quad\textrm{for any } L\geq 1.
\end{align}
Then $\|\nabla^l \chi_T\|_{S^2_1}\leq C$ for each $0\leq l\leq m$. Moreover, for each $P^\gamma$, the system for the (true) corrector
$$\mathcal{L}_1 u=-\mathcal{L}_1(P^\gamma)\textrm{~in~} \mathbb{R}^d$$ has a weak solution $\chi^\gamma$ such that $\nabla^l \chi^\gamma\in APW^2$ for each $0\leq l\leq m$.
\end{theorem}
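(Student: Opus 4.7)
The plan has three stages. First, a quantitative bound with $T^\sg$ growth: I would substitute $L=t^{1-\tau}$ (small $\tau>0$) into \eqref{estimate_es_chi_S1}. By monotonicity of $\rho_k$ in its second argument together with $\rho_k(L,L)\leq CL^{-\theta}$, one has $\rho_k(L,t)\leq Ct^{-(1-\tau)\theta}$; meanwhile $\exp(-ct^{2\tau})$ is super-polynomially small. Since $\theta>m$, I pick $\tau,\sg$ so that $(1-\tau)\theta+\sg>m$, making the integrand $t^{m-l-1-\sg-(1-\tau)\theta}$ integrable at infinity, which gives $\|\nabla^l\chi_T\|_{S^2_1}\leq C_\sg T^\sg$ for $0\leq l\leq m-1$ with $\sg>0$ arbitrarily small.

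To upgrade to a uniform bound and construct the true corrector, I would adapt the dyadic Cauchy strategy of Shen--Zhuge \cite{Shen2017_Approximate, Zhuge2017_Uniform}: set $T_j=2^j$ and $w_j=\chi_{T_{j+1}}-\chi_{T_j}$, which solves $\LL_1 w_j+T_{j+1}^{-2m}w_j=(T_j^{-2m}-T_{j+1}^{-2m})\chi_{T_j}$, so the right-hand side is of order $T_j^{-2m}$ in $S^2_1$. Rerunning the proof of \eqref{estimate_es_chi_S1} for this inhomogeneous PDE (its argument is linear in the source) and exploiting the $T_j^{-2m}$ prefactor yields $\|\nabla^l w_j\|_{S^2_1}\lesssim T_j^{-\delta}$ for some $\delta>0$; summing a geometric series in $j$ produces both uniform bounds on $\nabla^l\chi_T$ in $S^2_1$ for $0\leq l\leq m-1$ and the $L^2_{loc}$-convergence $\chi_T\to\chi^\ga$. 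The $l=m$ bound then follows by interior Caccioppoli applied to $\LL_1\chi_T=-T^{-2m}\chi_T-\LL_1 P^\ga$: ellipticity \eqref{intro_cond_el} absorbs $|\nabla^m\chi_T|^2$ once the lower-order derivatives are controlled. Passing to the limit in the weak form, using $T^{-2m}\chi_T\to 0$ locally, realizes $\chi^\ga$ as a weak solution of $\LL_1\chi^\ga=-\LL_1 P^\ga$.

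Finally, for $\nabla^l\chi^\ga\in APW^2(\R^d)$ I would first verify $\nabla^l\chi_T\in APW^2$ for each $T\geq 2$: for any Weyl-$\ve$-almost-period $h$ of $A$, the translate-difference $\chi_T(\cdot+h)-\chi_T$ solves an equation whose data are $S^2_1$-small proportional to $\ve$, so the uniform estimates force $h$ to be an $\ve'(\ve)$-almost-period of $\nabla^l\chi_T$ with $\ve'\to 0$. Since the convergence $\chi_T\to\chi^\ga$ is $S^2_1$-Cauchy (hence $B^2$-Cauchy) and $APW^2$ is closed in the $B^2$-seminorm, $\nabla^l\chi^\ga\in APW^2$ for each $0\leq l\leq m$. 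The principal technical obstacle is the Cauchy step: recasting \eqref{estimate_es_chi_S1} for the difference equation with source $\chi_T$ (rather than $\LL_1 P^\ga$) and extracting the required $T_j^{-\delta}$ decay from the $T_j^{-2m}$ prefactor requires revisiting the internal structure of the proof of Theorem \ref{estimate_thm_1} rather than using its statement as a black box.
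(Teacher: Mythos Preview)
Your overall strategy is sound and tracks the paper's closely; the first stage (substituting $L=t^{1-\tau}$ into \eqref{estimate_es_chi_S1} to get $\|\nabla^l\chi_T\|_{S^2_1}\leq C_\sg T^\sg$) is exactly Corollary~\ref{estimate_coro_l}. But the Cauchy step is organized differently in the paper, and you should know the simpler route.

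For the differences $g=\chi_T-\chi_{\wt T}$ with $T\leq\wt T\leq 2T$, the paper does \emph{not} rerun the machinery of Theorem~\ref{estimate_thm_1}. Instead it applies the large-scale H\"older estimate directly to the difference equation (Corollary~\ref{holder_coro_chi}, estimate \eqref{holder_es_delta_chi}) together with the preliminary bound $\|\chi_{\wt T}\|_{S^2_t}\leq C_\sg T^\sg$, which immediately gives
\[
\sum_{l\leq m}T^{l-m}\|\nabla^l g\|_{S^2_1}\leq C_\sg T^{\sg-m}.
\]
For $1\leq l\leq m$ this already yields $\|\nabla^l g\|_{S^2_1}\leq C_\sg T^{\sg-l}$, summable over dyadic $T$; in particular the $l=m$ case is handled here and does not need a separate Caccioppoli argument. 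Only $l=0$ requires extra work, and there the paper again avoids reopening Theorem~\ref{estimate_thm_1}: it writes $u(\cdot,t)=g*\Phi_t$, uses $\|u(\cdot,1)\|_\infty\leq\int_1^\infty\|\nabla_x(\nabla g*\Phi_t)\|_\infty\,dt$, splits at a scale $t_0$, bounds the short-time piece by $t_0\|\nabla g\|_{S^2_1}\leq Ct_0T^{\sg-1}$, and on the tail applies the ergodic bound \eqref{ergodic_es_infty2} to $\nabla\chi_T$ and $\nabla\chi_{\wt T}$ \emph{separately} (not to $\nabla g$), exploiting $\|\nabla\chi_T\|_{S^2_1}\leq C$ which is already established. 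Optimizing $t_0$ then gives $\|g\|_{S^2_1}\leq CT^{-\vartheta}$. Your plan to ``rerun the proof of \eqref{estimate_es_chi_S1} for the difference equation'' would require difference-operator estimates $\omega_k(\nabla^i g;L,R)$ that are not readily available; the paper's route sidesteps this entirely.

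One genuine error: your claim that ``$APW^2$ is closed in the $B^2$-seminorm'' is false; $APW^2(\R^d)\subsetneq B^2(\R^d)$, and the $B^2$-closure of trigonometric polynomials is all of $B^2$. What is true, and what the paper uses, is that $S^2_1$-convergence implies $W^2$-convergence (since $\|\cdot\|_{W^2}\leq C\|\cdot\|_{S^2_1}$), and $APW^2$ is by definition closed under the $\|\cdot\|_{W^2}$ seminorm. Since $\nabla^l\chi_T\in APW^2(\R^d)$ by Theorem~\ref{appcor_thm_APW} and the sequence is $S^2_1$-Cauchy, the limit lies in $APW^2(\R^d)$.
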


\begin{theorem}\label{conver_thm_2}
Assume that $\Omega$ is a bounded Lipschitz domain and $A\in APW^2$ satisfies \eqref{intro_cond_bdd}--\eqref{intro_cond_el}. Let $u_\varepsilon$ be the weak solution to Dirichlet problem \eqref{intro_eq1} and $u_0$ be the weak solution to the homogenized problem \eqref{intro_eq_hom}. Suppose further $A=A^*$ if $n\geq 2$ and $u_0\in H^{m+1}(\Omega; \mathbb{R}^n)$. Then for any $0<\varepsilon<1$ and $T=\varepsilon^{-1/m}$,
\begin{align}
&\quad\|u_\varepsilon-u_0\|_{H^{m-1}_0(\Omega)}\nonumber\\&\leq C_\sigma\Big(\sum_{l\leq m-1}\Theta_{k, l, \sigma}(T)\Big)\Big\{\|\nabla^m \chi_T-\psi\|_{B^2}+T^{-m}\sum_{l\leq m-1}\Theta_{k, l, \sigma}(T)\Big\}\|u_0\|_{H^{m+1}(\Omega)},\label{conver_es_rate1}
\end{align}
where $\Theta_{k, l, \sigma}(T)$ denotes the integral in the r.h.s. of \eqref{estimate_es_chi_S1}, $\psi$ is the unique solution to \eqref{hom_eq_corrector}. Furthermore, if \eqref{estimate_condition_rho} holds for some $\theta>m$ and $k\geq 1$, then
\begin{align}
\|u_\varepsilon-u_0\|_{H^{m-1}_0(\Omega)}\leq C\varepsilon\|u_0\|_{H^{m+1}(\Omega)}.\label{intro_es_converrate}
\end{align}
\end{theorem}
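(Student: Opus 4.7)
The plan is to combine a two-scale expansion with an Aubin--Nitsche-type duality argument, adapting the approach of \cite{Shen2017_Approximate} from the second-order case to the $2m$-order setting. The symmetry hypothesis $A=A^*$ is what permits the primal and dual problems to be expanded against the same approximate correctors, and ultimately produces the product structure $(\sum\Theta)(\cdots)$ of \eqref{conver_es_rate1}. Fix a cutoff $\eta_\ve\in C_c^\infty(\Om)$ equal to $1$ on $\{\mathrm{dist}(\cdot,\pa\Om)>2\ve\}$ with $|D^\al\eta_\ve|\leq C\ve^{-|\al|}$, and a smoothing operator $K_\ve$ at scale $\ve$. Define
\[
w_\ve(x):=u_\ve(x)-u_0(x)-\ve^m\!\!\sum_{|\ga|=m}\chi_T^\ga(x/\ve)\,K_\ve(\eta_\ve D^\ga u_0)(x)\in H_0^m(\Om;\R^n).
\]
Applying $\LL_\ve$ and using the approximate corrector equation $\LL_1\chi_T^\ga+T^{-2m}\chi_T^\ga=-\LL_1 P^\ga$ to cancel the leading oscillatory terms, and then introducing a higher-order antisymmetric flux corrector $\mf{B}_T$ (constructed as a further regularised elliptic problem on $\R^d$, with $S^2$-bounds inherited from Theorem~\ref{estimate_thm_1}), the residual $\LL_\ve w_\ve$ in $H^{-m}(\Om)$ decomposes into: (i) a divergence-form error $\lesssim(\sum\Theta)\|\nabla^m\chi_T-\psi\|_{B^2}\|u_0\|_{H^{m+1}}$; (ii) a mass-term piece $\lesssim T^{-m}(\sum\Theta)\|u_0\|_{H^{m+1}}$ coming from the $T^{-2m}\chi_T$ contribution; and (iii) a boundary-layer term supported in the $2\ve$-neighbourhood of $\pa\Om$, controlled by a Hardy-type inequality on the Lipschitz domain using $u_0\in H^{m+1}$.

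To descend from the natural energy estimate $\|w_\ve\|_{H_0^m}\leq C\|\LL_\ve w_\ve\|_{H^{-m}}$ (which produces only one factor of $\sum\Theta$) to the sharper $H_0^{m-1}$-bound with two symmetric $\Theta$-factors, I would run the duality
\[
\|u_\ve-u_0\|_{H_0^{m-1}(\Om)}=\sup\bigl\{\langle u_\ve-u_0,F\rangle: F\in H^{-(m-1)}(\Om),\ \|F\|_{H^{-(m-1)}}=1\bigr\},
\]
let $v_0\in H_0^m(\Om)$ solve the adjoint problem $\LL_0^*v_0=F$, and test the equation for $u_\ve-u_0$ against $v_0$ after a parallel two-scale expansion of $v_0$ with the adjoint approximate corrector. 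Under $A=A^*$ this corrector coincides with $\chi_T$, so the two expansions pair up symmetrically and contribute one $\Theta$-factor from each side multiplied by the common residual in the braces of \eqref{conver_es_rate1}. The residual correction $\ve^m\chi_T(\cdot/\ve)K_\ve(\eta_\ve\nabla^m u_0)$ itself contributes only $\ve\cdot(\sum\Theta)\cdot\|u_0\|_{H^{m+1}}$ to the $H_0^{m-1}$-norm, since at most $m-1$ derivatives can fall on $\chi_T(x/\ve)$, each producing a factor $\ve^{-1}$ that meets the prefactor $\ve^m$. Finally, under hypothesis \eqref{estimate_condition_rho}, Theorem~\ref{estimate_thm_2} gives $\Theta_{k,l,\sg}(T)\leq C$ for $0\leq l\leq m$, and the Besicovich convergence of the approximate flux yields $\|\nabla^m\chi_T-\psi\|_{B^2}\leq CT^{-m}$; choosing $T=\ve^{-1/m}$ so that $T^{-m}=\ve$ collapses \eqref{conver_es_rate1} to $C\ve\|u_0\|_{H^{m+1}(\Om)}$, which is \eqref{intro_es_converrate}.

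The chief technical obstacle I foresee is the construction of the flux corrector $\mf{B}_T$ in the almost-periodic, approximate-corrector regime: it must possess antisymmetry in each of the $m$ pairs of indices (so that the flux error is written in divergence form), solve its own regularised elliptic problem on $\R^d$ with quantitative $S^2$-control in terms of $\|\nabla^m\chi_T-\psi\|_{B^2}$ and the $\Theta$-quantities of Theorem~\ref{estimate_thm_1}, and behave correctly under the swap $A\leftrightarrow A^*$ required by the duality. A secondary difficulty is the regularity of the dual solution $v_0$ on the Lipschitz domain, where $H^{m+1}$-regularity is not automatic for generic $F\in H^{-(m-1)}$; this may force restricting $F$ to a dense subclass and re-estimating the resulting boundary-layer contribution to the duality pairing separately, so that the product structure of \eqref{conver_es_rate1} really emerges after a careful balance between dual regularity and boundary-layer control.
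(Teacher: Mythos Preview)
Your overall architecture (two-scale expansion plus Aubin--Nitsche duality) matches the paper's, and your identification of the flux-corrector construction as the main technical input is correct: the paper builds exactly such an object, the dual approximate corrector $\phi_T$ solving $(-\Delta)^m\phi_T+T^{-2m}\phi_T=B_T-\langle B_T\rangle$, and exploits the antisymmetry of $D^\ga\phi_T^{\al\be}-D^\al\phi_T^{\ga\be}$ in $(\al,\ga)$ to put the flux error in divergence form. So that part of your plan is sound.

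However, there is a genuine gap in how you propose to close the duality on a Lipschitz domain, and it is tied to a misreading of the role of the symmetry hypothesis. You say $A=A^*$ is needed so that the primal and dual correctors coincide; that is a convenience but not the crux. In the paper the decisive use of $A=A^*$ is that it forces $\wh{A}=\wh{A}^*$, which makes available the nontangential-maximal-function estimates of Pipher--Verchota for the constant-coefficient homogenized operator on Lipschitz domains. Concretely, the paper decomposes the dual solution as $v_0=v_1+v_2$ with $v_1$ solving the problem on a ball (so $v_1\in H^{m+1}$) and $v_2$ solving a homogeneous equation with boundary data; the nontangential estimate for $v_2$ then yields the two crucial bounds
\[
\|\nabla^m v_0\|_{L^2(\Om_{c\de})}\leq C\de^{1/2}\|F\|_{H^{-m+1}},\qquad
\|\nabla^{m+1} v_0\|_{L^2(\Om\setminus\Om_\de)}\leq C\de^{-1/2}\|F\|_{H^{-m+1}}.
\]
These replace the unavailable global $H^{m+1}$-regularity of $v_0$. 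Your proposed fix (restrict $F$ to a dense subclass) does not work: on a merely Lipschitz domain $v_0$ need not lie in $H^{m+1}$ no matter how smooth $F$ is, so the constants in any such approximation blow up and the duality cannot be closed uniformly in $F$. Relatedly, the cutoff should be taken at a scale $\de\geq 2\ve$ chosen to equal the total error (the paper's $\de$ in its Lemma on the two $H^m$-bounds), not at scale $\ve$; the boundary-layer contribution is $O(\de^{1/2})$ on each side, and it is the product $\de^{1/2}\cdot\de^{1/2}=\de$ together with the $\sum_{l\le m}\|\nabla^l\chi_T\|_{S_1^2}$ factor from the dual corrector term that produces the product structure in \eqref{conver_es_rate1}. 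Your account of the second assertion \eqref{intro_es_converrate} under \eqref{estimate_condition_rho} is correct.
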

Theorem \ref{estimate_thm_2} above extends the results of Theorem 1.2 in \cite{Shen2018_Approximate} to higher-order elliptic systems, while Theorem \ref{conver_thm_2} above generalizes the corresponding results of Theorem 1.4 in \cite{Shen2018_Approximate}, where similar results were obtained for second-order elliptic systems in $C^{1,1}$ domains. If $\partial\Omega \in C^{m, 1}$, estimate \eqref{intro_es_converrate} in Theorem \ref{conver_thm_2} still holds with the symmetry assumption of $A$ removed. Finally, we remark that, without the additional assumption \eqref{estimate_condition_rho}, the true correctors may not exist and the $O(\varepsilon)$ convergence rate is not always satisfied even if the coefficient $A$ is very smooth (see e.g. \cite{Bondarenko2005}).

  It is known that elements of $B^2$ and $APW^2$ are equivalence classes under the equivalent relation induced by their semi-norms. However, the quantity $\rho_k(L, R)$ is not well-defined for the equivalence class of $A$ in $B^2$ or $APW^2$. More precisely, condition \eqref{estimate_condition_rho} may fail for some function while it holds for another one in the same equivalence class. As a supplement of \eqref{estimate_condition_rho}, we provide here a sufficient condition for the sharp convergence rate
in terms of perturbations on the coefficients.

\begin{definition}
Let $\Omega$ be a bounded Lipschitz domain and $A\in B^2$ satisfy \eqref{intro_cond_bdd}--\eqref{intro_cond_el}. We say $A$ has the $O(\varepsilon)$-convergence property if, for any $\dot{g}\in W\!A^{m, 2}(\partial\Omega; \mathbb{R}^n)$ and $f\in H^{-m}(\Omega;\mathbb{R}^n)$,
\begin{align}
\|u_\varepsilon-u_0\|_{H^{m-1}_0(\Omega)}\leq C\varepsilon\|u_0\|_{H^{m+1}(\Omega)},
\end{align}
where $C$ is independent of $\varepsilon$, $f$ and $\dot{g}$, $u_\varepsilon$ is the weak solution to problem \eqref{intro_eq1} with coefficients $A$ and $u_0$ is the solution to the corresponding homogenized problem \eqref{intro_eq_hom} with $u_0\in H^{m+1}(\Omega)$.
\end{definition}

\begin{theorem}\label{intro_thm_4}
Assume that $\Omega$ is a bounded Lipschitz domain and $A, \widetilde{A}\in B^2$ satisfy \eqref{intro_cond_bdd}--\eqref{intro_cond_el}. Suppose that $A$ has the $O(\varepsilon)$-convergence property. Then there exists  $p>2$, depending only on $m, n, \Omega, \mu$, such that if
\begin{align}
\Big(\fint_{B(0, T)}|A-\widetilde{A}|^p dx\Big)^{{1/p}}\leq CT^{-1} ~\textrm{ for any }~ T\geq 1,\label{pertur_cond_pertur}
\end{align}
then $\widetilde{A}$ also has the $O(\varepsilon)$-convergence property.
\end{theorem}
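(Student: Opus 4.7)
The plan is to prove the bound
\begin{equation*}
\|u_\ve^A - u_\ve^{\wt A}\|_{H^{m-1}_0(\Om)} \leq C\ve\|u_0\|_{H^{m+1}(\Om)}
\end{equation*}
and then combine it with the assumed $O(\ve)$-convergence of $A$ via the triangle inequality, where $u_\ve^A$ and $u_\ve^{\wt A}$ denote the solutions of \eqref{intro_eq1} for coefficients $A(x/\ve)$ and $\wt A(x/\ve)$ respectively with common data $f, \dot g$. A preliminary observation is that \eqref{pertur_cond_pertur}, combined with $p\geq 2$ and Jensen's inequality, forces the Besicovitch seminorm $\|A-\wt A\|_{B^2}$ to vanish; hence $A$ and $\wt A$ lie in the same $B^2$-equivalence class and induce the same homogenized operator, so both problems share a common homogenized solution $u_0\in H^{m+1}(\Om)$.

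Setting $w_\ve := u_\ve^A - u_\ve^{\wt A}$, the function $w_\ve$ has zero Whitney trace on $\pa\Om$ and satisfies
\begin{equation*}
\LL_\ve^A w_\ve = (-1)^m \sum_{|\al|=|\be|=m} D^\al \Big[(\wt A - A)_{ij}^{\al\be}(x/\ve)\, D^\be u_{\ve, j}^{\wt A}\Big] \quad\text{in }\Om.
\end{equation*}
The standard energy estimate derived from the coercivity \eqref{intro_cond_el} yields $\|\nabla^m w_\ve\|_{L^2(\Om)} \leq C\|(A-\wt A)(x/\ve)\,\nabla^m u_\ve^{\wt A}\|_{L^2(\Om)}$. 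To split the two factors via H\"older, I invoke a Meyers-type self-improvement for higher-order elliptic systems with bounded measurable coefficients on Lipschitz domains: there exists $p_0>2$, depending only on $m,n,\Om,\mu$, such that
\begin{equation*}
\|\nabla^m u_\ve^{\wt A}\|_{L^{p_0}(\Om)} \leq C\|u_0\|_{H^{m+1}(\Om)}.
\end{equation*}
This follows by decomposing $u_\ve^{\wt A} = (u_\ve^{\wt A} - u_0) + u_0$: the first summand has zero Whitney trace and divergence-form right-hand side $(\wh A - \wt A(\cdot/\ve))\nabla^m u_0$, so Meyers applies, while $\|\nabla^m u_0\|_{L^{p_0}}$ is controlled by $\|u_0\|_{H^{m+1}}$ through the Sobolev embedding $H^{m+1}\hookrightarrow W^{m, p_0}$, valid for $p_0$ sufficiently close to $2$. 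Setting $p := 2p_0/(p_0-2) > 2$, which depends only on $m,n,\Om,\mu$ as demanded by the statement, H\"older now gives
\begin{equation*}
\|\nabla^m w_\ve\|_{L^2(\Om)} \leq C\|(A-\wt A)(x/\ve)\|_{L^p(\Om)}\|u_0\|_{H^{m+1}(\Om)}.
\end{equation*}

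The last step is to convert \eqref{pertur_cond_pertur} into $\|(A-\wt A)(x/\ve)\|_{L^p(\Om)} \leq C\ve$. After translating, we may assume $\Om\subset B(0, R_0)$; the change of variables $y = x/\ve$ gives
\begin{equation*}
\int_\Om|A-\wt A|^p(x/\ve)\, dx \leq \ve^d \int_{B(0, R_0/\ve)}|A-\wt A|^p(y)\, dy \leq CR_0^{d-p}\ve^p
\end{equation*}
by applying \eqref{pertur_cond_pertur} with $T = R_0/\ve \geq 1$. Combining this with the previous display and Poincar\'e's inequality on functions with zero Whitney trace (which gives $\|w_\ve\|_{H^{m-1}_0(\Om)} \leq C\|\nabla^m w_\ve\|_{L^2(\Om)}$) completes the argument. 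The main obstacle is the Meyers-type exponent $p_0 > 2$ for higher-order elliptic systems in Lipschitz domains, uniform in $\ve$ and depending only on $m, n, \Om, \mu$; this self-improvement is in principle standard but must be invoked carefully so that $p$ in \eqref{pertur_cond_pertur} depends only on the stated parameters. The rescaling from the centered average in \eqref{pertur_cond_pertur} to an estimate over $\Om$ is a minor technicality absorbed in the translation and enlargement above.
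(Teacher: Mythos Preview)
Your proof is correct and follows essentially the same skeleton as the paper's argument, but with one genuine difference in how the Meyers-type higher integrability is deployed. The paper proceeds by duality: for $F\in H^{-m+1}(\Om)$ it solves the auxiliary problem $\LL^{A}_\ve v_\ve=F$ in $H^m_0(\Om)$, applies the $W^{m,q}$ estimate of \cite{Dong2011_Higher} to $v_\ve$, and pairs against $\om_\ve=u_\ve-\wt u_\ve$. You instead bound $\|\nabla^m w_\ve\|_{L^2(\Om)}$ directly by the energy estimate and apply Meyers to $u_\ve^{\wt A}$ itself, handling the nonzero boundary data through the decomposition $u_\ve^{\wt A}=(u_\ve^{\wt A}-u_0)+u_0$ together with the Sobolev embedding $H^{m+1}\hookrightarrow W^{m,p_0}$.

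Both routes are sound; what each buys is slightly different. Your approach is more direct and actually yields the stronger conclusion $\|u_\ve^A-u_\ve^{\wt A}\|_{H^m_0(\Om)}\leq C\ve\|u_0\|_{H^{m+1}(\Om)}$ before passing to $H^{m-1}_0$ by Poincar\'e. The paper's duality avoids invoking Sobolev embedding and in fact only needs $\|\nabla^m u_0\|_{L^2(\Om)}$ on the right-hand side (since Meyers is applied to the dual solution with zero boundary data and source $F\in H^{-m+1}$), though this refinement is irrelevant to the stated $O(\ve)$-convergence property. One minor remark: your phrase ``after translating'' is unnecessary---any bounded $\Om$ already sits in some $B(0,R_0)$, and translating would shift the centered condition \eqref{pertur_cond_pertur}.
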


Now we present the outline and key ideas of the paper. We start in Section \ref{sec_almost} with a brief review on a.p. functions, along with the qualitative homogenization theory for higher-order systems with a.p. coefficients. Two compactness results, namely Theorems \ref{hom_thm_compactness} and \ref{hom_thm_compactness2}, on a sequence of elliptic operators $\mathcal{L}^{A_l}_{\varepsilon_l}+\lambda_l$ are established, where each $A_l$ is a translation of $A$. Theorem \ref{hom_thm_compactness}, involving bounded translations of $A$ only, is proved by Tartar's method of test functions as \cite{Jikov1994}, while Theorem \ref{hom_thm_compactness2}, allowing arbitrary translations of $A$, follows from Theorem \ref{hom_thm_compactness} and a perturbation argument.

In Section \ref{sec_pre}, we provide several useful lemmas for higher-order elliptic systems, including a Poincar\'{e}-Sobolev lemma, Caccioppoli's inequalities and Meyers' reverse H\"{o}lder inequalities, while in Section \ref{sec_appcor}, we introduce the approximate corrector $\chi_T$ and establish some elementary estimates.

In Section \ref{sec_holder}, by using a compactness argument introduced by M. Avellaneda and F. Lin in \cite{Avellaneda1987_Compactness}, we prove a large-scale H\"{o}lder estimate for higher-order elliptic system with $B^2$-coefficients, from which some principal uniform estimates on $\chi_T$ are derived. To adapt the compactness argument to higher-order elliptic systems, we take full advantage of Theorem \ref{hom_thm_compactness2} and the Poincar\'{e}-Sobolev lemma in Section \ref{sec_pre}. Meanwhile, we derive a Liouville theorem for higher-order elliptic systems with $B^2$-coefficients using Theorem \ref{hom_thm_compactness} and the compactness argument. We mention that the large-scale H\"{o}lder estimate and the Liouville theorem are comparable to the results for second-order elliptic systems in \cite{Shen2018_Approximate}, as our settings of the coefficients are more general, belonging to $B^2$ rather than $APW^2$.

In Section \ref{sec_ergodic}, we extend the quantitative ergodic theorem in \cite{Shen2018_Approximate} to the higher-order case, which allows us to bound the spatial averages of a uniformly locally integrable function by its higher-order differences and its $S^2_R$-norm with exponential decay. Thanks to the large-scale H\"{o}lder estimates, the higher-order differences of $\chi_T$ can then be controlled by $\rho_k(L, R)$. Following this idea, in Section \ref{sec_estimate} we provide the proof of Theorem \ref{estimate_thm_1}. Furthermore, by showing that $\{\chi_T\}_{T>0}$ is a Cauchy sequence with respect to $S^2_1$-norm, we obtain the existence of true correctors in $APW^2$ in Theorem \ref{estimate_thm_2}.
 In Section \ref{sec_dual}, we establish estimates for the so-called dual approximate correctors $\phi_T$.

 Finally, Section \ref{sec_conver} is devoted to the proofs of Theorems \ref{conver_thm_2} and \ref{intro_thm_4}. Based on the estimates for $\chi_T$ and $\phi_T$ aforementioned, Theorem  \ref{conver_thm_2} follows form the duality argument inspired by \cite{Suslina2013_Dirichlet} (see also \cite{Suslina2013_Neumann,Shen2017_Boundary,Shen2017_Convergence}). Theorem \ref{intro_thm_4} is proved by estimating the difference between $u_\varepsilon$ and $\widetilde{u}_\varepsilon$, the solutions corresponding to $A$ and $\widetilde{A}$ respectively, since $u_\varepsilon$ and $\widetilde{u}_\varepsilon$ have the same limit $u_0$ under condition \eqref{pertur_cond_pertur}.

Throughout this paper, $C$ will denote positive constants, which may depend on $m, n, \mu$ and $\Omega$, but never on $\varepsilon$ or $T$, and would differ from each other even in the same line. The usual summation convention for repeated indices will be  used henceforth.  Moreover, if it is clear to understand, we may use scalar notation for concision, that is, omitting the subscripts $i, j$. We also use notations $\fint_E f:=(1/|E|)\int_E f$ for the integral average of $f$ over $E$ and $x^\alpha$ for the monomial $x_1^{\alpha_1}\cdots x_d^{\alpha_d}$.


\section{Almost-periodic functions and qualitative homogenization}\label{sec_almost}

\subsection{Almost-periodic functions}\label{hom_sec_almost}

In this part, we provide a brief review on a.p. functions. Let $\rm{Trig}(\mathbb{R}^d)$ denote the set of real trigonometric polynomials on $\mathbb{R}^d$ and $1\leq p<\infty$. For $f\in L^p_{loc}(\mathbb{R}^d)$ and $R>0$, we denote
\begin{gather*}\|f\|_{S^p_R}:=\sup\limits_{x\in\mathbb{R}^d}\Big(\fint_{B(x, R)}|f|^p\Big)^{1/p}, \\
\|f\|_{W^p}:=\limsup\limits_{R\rightarrow \infty}\|f\|_{S^p_R}~~\textrm{ and }~~\| f\|_{B^p}:=\limsup\limits_{R\rightarrow\infty}\Big(\fint_{B(0, R)}|f|^p\Big)^{1/p}.
\end{gather*}
A function $f$ in $L^p_{loc}(\mathbb{R}^d)$ is said to belong to $S^p_R$, $APW^p$ and $B^p$, respectively, if $f$ is a limit of a sequence of functions in $\rm{Trig}(\mathbb{R}^d)$ with respect to the norm $\|\cdot\|_{S^p_R}$, the semi-norms $\|\cdot\|_{W^p}$ and $\|\cdot\|_{B^p}$, respectively. Under the equivalent relation that $f\sim g$ if $\|f-g\|_{W^p}=0$ (resp., $\|f-g\|_{B^p}=0$), the set $APW^p/\sim$ (resp., $B^p/\sim$) becomes a Banach space.
Functions in $APW^2$ (resp., $B^2$) are said to be almost-periodic in the sense of H. Weyl (resp., Besicovich).

Note that if $0<r<R<\infty$, then
\begin{equation}\label{hom_inequ_1}\|f\|_{S_R^p}\leq C\|f\|_{S_r^p},\end{equation}
where $C$ depends only on $d$ and $p$. This implies that $\|f\|_{W^p}\leq C_p\|f\|_{S^p_R}$ for any $R>0$, and thereby,
$S^p_R\subset APW^p\subset B^p$.

Let $$L^p_{\rm{loc, unif}}(\mathbb{R}^d):=\Big\{f\in L^p_{loc}(\mathbb{R}^d): \sup_{x\in \mathbb{R}^d}\int_{B(x, 1)}|f|^p<\infty\Big\}.$$ It is not hard to see that $APW^p\subset L^p_{\textrm{loc, unif}}(\mathbb{R}^d)$. Moreover, for a function $f\in L^p_{\rm{loc, unif}}(\mathbb{R}^d)$, $f\in APW^p$ if and only if
\begin{equation}\label{hom_char}\sup_{y\in\mathbb{R}^d}\inf_{|z|\leq L}\|\Delta_{yz}(f)\|_{S^p_R}\rightarrow 0~~~\text{as}~L, R\rightarrow \infty,\end{equation}
where $\Delta_{yz}f(x):=f(x+y)-f(x+z)$ is the difference operator for $y, z\in \mathbb{R}^d$ (see \cite{Besicovitch1955, Shen2018_Approximate}).

Let $f\in L^1_{loc}(\mathbb{R}^d)$. A number $\langle f\rangle$ is called the mean value of $f$ if
\begin{equation*}
\lim\limits_{\varepsilon\rightarrow 0^+}\int_{\mathbb{R}^d}f(x/\varepsilon)\varphi(x) dx=\langle f\rangle\int_{\mathbb{R}^d}\varphi
\end{equation*}
for any $\varphi\in C^\infty_c(\mathbb{R}^d)$. If $f\in L^2_{loc}(\mathbb{R}^d)$ and $\|f\|_{B^2}<\infty$, the existence of $\langle f\rangle$ is equivalent to the condition that $f(x/\varepsilon)\rightharpoonup\langle f\rangle$ weakly in $L^2_{loc}(\mathbb{R}^d)$ as $\varepsilon\rightarrow 0$. In this case, one has
$$\langle f\rangle=\lim\limits_{L\rightarrow \infty}\fint_{B(0, L)}f. $$
If $f, g\in B^2$, then $fg$ has the mean value $\langle fg\rangle$ and the space $B^2$ is a Hilbert space endowed with the inner product $(f, g):=\langle fg\rangle$. Moreover, if $f\in B^2, g\in L^\infty(\mathbb{R}^d)\cap B^2$, then $fg\in B^2$.

Denote by $\overline{m}$ the number of the multi-indexes $\alpha$ satisfying $|\alpha|=m$. Let $\mathcal{V}$ and $\mathcal{W}$ be the closures of $\{\{D^\alpha\varphi\}_{|\alpha|=m}: \varphi\in \rm{Trig}(\mathbb{R}^d), \langle\varphi\rangle=0\}$ and $\{\{\varphi_\alpha\}_{|\alpha|=m}: \varphi\in \rm{Trig}(\mathbb{R}^d; \mathbb{R}^{\overline{m}}), \sum_\alpha D^\alpha\varphi_\alpha=0, \langle\varphi\rangle=0\}$ in $(B^2)^{\overline{m}}$ respectively. Then $(B^2)^{\overline{m}}$ has the decomposition (see e.g. \cite{Jikov1994})
 $$(B^2)^{\overline{m}}=\mathcal{V}\oplus\mathcal{W}\oplus\mathbb{R}^{\overline{m}}.$$

\subsection{Qualitative homogenization and compactness results}\label{sec_hom}

Throughout this subsection, we always assume that $A=(A^{\alpha\beta}_{ij})$ satisfies \eqref{intro_cond_bdd}--\eqref{intro_cond_el} and $A\in B^2$, i.e., $A_{ij}^{\alpha\beta}\in B^2$. It follows from \eqref{intro_cond_el} that
\begin{align*}\langle A_{ij}^{\alpha\beta}\phi_i^\alpha\phi_j^\beta\rangle\geq \mu\langle\phi^2\rangle ~\textrm{ for any }~ \phi\in\mathcal{V}^n,
\end{align*}
which, by the Lax-Milgram theorem, yields that, for any $\beta$ with $|\beta|=m$ and $1\leq j\leq n$, there exists a unique $\psi_j^\beta=(\psi_{ij}^{\alpha\beta})\in \mathcal{V}^n$ such that
\begin{equation}\label{hom_eq_corrector}
\langle A_{ik}^{\alpha\gamma}\psi_{kj}^{\gamma\beta}\phi_i^\alpha\rangle=-\langle A_{ij}^{\alpha\beta}\phi_i^\alpha\rangle~~\text{  for any } \phi=(\phi_i^\alpha)\in \mathcal{V}^n.
\end{equation}
Denote \begin{equation}\label{hom_def_A}\widehat{A}_{ij}^{\alpha\beta}=\langle A_{ij}^{\alpha\beta}\rangle+\langle A_{ik}^{\alpha\gamma}\psi_{kj}^{\gamma\beta}\rangle.\end{equation}
Then one can show that
$
|\widehat{A}^{\alpha \beta}_{ij}|\leq \mu_1,
$
where $\mu_1$ depends only on $\mu$, and
\begin{align*}
\sum_{|\alpha|=|\beta|=m}\int_{\mathbb{R}^d} \widehat{A}_{ij}^{\alpha\beta} D^\beta\phi_j D^\alpha\phi_i \geq \mu \|D^\alpha\phi\|^2_{L^2(\mathbb{R}^d)} ~\textrm{ for any}~\phi\in C_c^\infty(\mathbb{R}^d; \mathbb{R}^n).
\end{align*}
Moreover, by replacing $A$ in \eqref{hom_eq_corrector} by its adjoint $A^*$, we can get a unique solution $\psi^*\in\mathcal{V}^n$, such  that $\widehat{A^*}=(\widehat{A})^*$, where $\widehat{A^*}$ is defined as \eqref{hom_def_A} with $A$, $\psi$ replaced by $A^*$, $\psi^*$ respectively.

Next we provide two compactness theorems, which will be used to establish the large-scale H\"{o}lder estimates as well as a Liouville theorem for elliptic systems
\begin{equation*}
\mathcal{L}_\varepsilon u_\varepsilon+\lambda u_\varepsilon =\sum_{|\alpha|\leq m}D^\alpha f_\alpha,
\end{equation*}
with $B^2$-coefficients. We need the following lemma (\cite[Lemma 1.1]{Jikov1994}).

\begin{lemma}\label{hom_lem_div_curl}
Let $\{u_l\}, \{v_l\}$ be two bounded sequences in $L^2(\Omega; \mathbb{R}^{\overline{m}})$. Suppose that
\begin{enumerate}
      \item [(1)]$u_l\rightharpoonup u$ and $v_l\rightharpoonup v$ weakly in $L^2(\Omega; \mathbb{R}^{\overline{m}})$;
      \item [(2)]$u_l=(D^\alpha U_l)$ for some $U_l\in L^2(\Omega)$ and $D^\alpha(v_l)_\alpha\rightarrow f$ in $H^{-m}(\Omega)$.
\end{enumerate}
Then for any scalar function $\varphi\in C^\infty_c(\Omega)$,$$\int_\Omega (u_l\cdot v_l)\varphi dx\rightarrow\int_\Omega (u\cdot v)\varphi dx ~\textrm{ as } l\rightarrow\infty.$$
\end{lemma}

\begin{theorem}\label{hom_thm_compactness}
Let $u_l\in H^m(\Omega; \mathbb{R}^n)$ be a weak solution to $\mathcal{L}^{A_l}_{\varepsilon_l}(u_l)+\lambda_l u_l=f_l$ in $\Omega$, where $\varepsilon_l \rightarrow 0$, $\lambda_l\geq 0$, $\lambda_l\rightarrow \lambda$ and $A_l(x)=A(x+x_l)$ for some $x_l\in \mathbb{R}^d$. Assume that $u_l\rightharpoonup u$ weakly in $H^m(\Omega; \mathbb{R}^n)$, $f_l\rightarrow f$ strongly in $H^{-m}(\Omega; \mathbb{R}^n)$ and $\{x_l\}$ is bounded in $\mathbb{R}^d$. Then $A_l^{\alpha\beta}(x/\varepsilon_l)D^\beta u_l\rightharpoonup\widehat{A}^{\alpha\beta}D^\beta u$ weakly in $L^2(\Omega; \mathbb{R}^{\overline{m}\times n})$ and, consequently, $u$ is a weak solution to $\mathcal{L}_0(u)+\lambda u=f$ in $\Omega$.
\end{theorem}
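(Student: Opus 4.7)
The plan is to adapt Tartar's method of oscillating test functions, following the second-order $B^2$-template in \cite[Ch.~7]{Jikov1994}, using the abstract correctors in $\mc V^n$ together with the higher-order div-curl Lemma \ref{hom_lem_div_curl}. First, since $\{x_l\}$ is bounded, extract a subsequence so that $x_l\to x_0\in\R^d$; translation-invariance of the mean value gives $\wh{A(\cdot+x_0)}=\wh A$, so we may assume $x_0=0$. By Rellich, $u_l\to u$ strongly in $H^{m-1}(\Om;\R^n)$, and the $L^\infty$-bound \eqref{intro_cond_bdd} makes the fluxes $G_l^\al := A_l^{\al\be}(x/\ve_l)D^\be u_l$ bounded in $L^2(\Om;\R^{\overline m\times n})$, so along a further subsequence $G_l^\al\rightharpoonup F^\al$. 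It suffices to show $F^\al=\wh{A}^{\al\be}D^\be u$; then $\LL_0 u+\lm u=f$ follows by sending $l\to\infty$ in the weak formulation of the equation for $u_l$.

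To identify $F$, I construct oscillating adjoint test functions. For each multi-index $\gamma$ with $|\gamma|=m$ and each $1\le j\le n$, let $\psi^{*,\gamma}_j\in\mc V^n$ be the abstract adjoint corrector from \eqref{hom_eq_corrector} with $A$ replaced by $A^*$. By density of trigonometric gradients in $\mc V$, choose mean-zero $\vp^{\gamma,k}\in\mathrm{Trig}(\R^d;\R^n)$ with $\{D^\al\vp^{\gamma,k}\}_{|\al|=m}\to\psi^{*,\gamma}_j$ in $B^2$, and set
\[
w_l^{\gamma,k}(x) := P^\gamma(x)\,e_j + \ve_l^m\,\vp^{\gamma,k}\!\bigl(x/\ve_l+x_l\bigr),\qquad P^\gamma(x):=x^\gamma/\gamma!.
\]
Then $w_l^{\gamma,k}\rightharpoonup P^\gamma e_j$ in $H^m(\Om;\R^n)$ as $l\to\infty$, and the adjoint flux $H^{\gamma,k,\al}_l(x):=A^{*,\al\be}(x/\ve_l+x_l)\bigl[\delta_{\be\gamma}e_j+(\pa^\be\vp^{\gamma,k})(x/\ve_l+x_l)\bigr]$ converges weakly in $L^2(\Om;\R^{\overline m\times n})$ to the constant $\wh{A}^{*,\al\gamma}e_j+\eta^{\gamma,k,\al}$, where the error $\eta^{\gamma,k,\al}\to 0$ as $k\to\infty$ by the mean-value formula and \eqref{hom_def_A}.

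Next, apply Lemma \ref{hom_lem_div_curl} in both directions to the algebraic identity $G_l^\al\cdot D^\al w_l^{\gamma,k}=D^\al u_l\cdot H^{\gamma,k,\al}_l$, tested against $\vp\in C_c^\infty(\Om)$. On one side, $\{D^\al w_l^{\gamma,k}\}$ has gradient structure and $\sum_\al D^\al G_l^\al=(-1)^m(f_l-\lm_l u_l)$ converges in $H^{-m}(\Om)$; on the other, $\{D^\al u_l\}$ has gradient structure and one must check that $\sum_\al D^\al H^{\gamma,k,\al}_l$ also converges in $H^{-m}(\Om)$, using that the residual $R^\gamma_k:=\LL^{A^*}_1(P^\gamma e_j+\vp^{\gamma,k})$ has mean zero and that the $\ve_l^{-m}$-prefactor in $\sum_\al D^\al H^{\gamma,k,\al}_l=(-1)^m\ve_l^{-m}R^\gamma_k(\cdot/\ve_l+x_l)$ is absorbed by the oscillation of $R^\gamma_k$ against $H^m_0$-test functions (via Fourier expansion and integration by parts). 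Two applications of the div-curl lemma, combined with $D^\al(P^\gamma e_j)=\delta_{\al\gamma}e_j$ and the identity $\wh{A^*}=(\wh A)^*$, yield after first $l\to\infty$ and then $k\to\infty$ the equality $F^\gamma_j=\wh{A}^{\gamma\al}_{ji}D^\al u_i$. Since $\gamma,j$ are arbitrary, $F^\al=\wh{A}^{\al\be}D^\be u$ as required.

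The main obstacle is ensuring that the divergence of the adjoint flux converges in $H^{-m}(\Om)$: because the abstract corrector $\psi^{*,\gamma}$ does not solve the cell equation pointwise, after trigonometric-polynomial approximation one is left with a nonzero but mean-zero residual $R^\gamma_k$, and the subtle cancellation between its $\ve_l^{-m}$-blowup and its oscillatory structure must be exhibited in the correct negative Sobolev norm to close the double-limit argument ($l\to\infty$ with $k$ fixed, then $k\to\infty$). The bounded-translation assumption on $\{x_l\}$ enters at this point to keep every trigonometric-polynomial constant and every mean-value estimate uniform in $l$.
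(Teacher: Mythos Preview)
Your overall architecture---Tartar's oscillating test function method, the abstract adjoint corrector $\psi^{*,\gamma}\in\mc V^n$, trigonometric approximation, and the higher-order div-curl Lemma~\ref{hom_lem_div_curl}---is exactly the paper's. But the proof has a genuine gap at the step you yourself flag as the ``main obstacle.''

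You approximate the \emph{corrector} $\psi^{*,\gamma}$ by gradients $\{D^\al\vp^{\gamma,k}\}$ and then need $\sum_\al D^\al H_l^{\gamma,k,\al}$ to converge strongly (or at least be precompact) in $H^{-m}(\Om)$. Your formula $\sum_\al D^\al H_l^{\gamma,k,\al}=(-1)^m\ve_l^{-m}R^\gamma_k(\cdot/\ve_l+x_l)$ is correct, but the claimed ``Fourier expansion and integration by parts'' cannot rescue it: the residual $R^\gamma_k$ contains the rough coefficient $A\in L^\infty\cap B^2$, which is not a trigonometric polynomial, so there is no Fourier expansion to invoke. Even if $A$ were a single exponential $e^{i\xi\cdot y}$, one has $\ve_l^{-m}e^{i\xi\cdot x/\ve_l}=(i\xi_j)^{-m}D_{x_j}^m e^{i\xi\cdot x/\ve_l}$, which is merely \emph{bounded} in $H^{-m}(\Om)$; strong convergence holds only in $H^{-m-\de}$ for $\de>0$, and Lemma~\ref{hom_lem_div_curl} requires strong $H^{-m}$ convergence. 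So the second application of the div-curl lemma fails.

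The paper sidesteps this completely by approximating not the corrector but the adjoint \emph{flux}
\[
q_j^\be \;=\; A^{\al\be}_{ij}\bigl(\psi^{*\al\ga}_{ik}+\de_{ik}\de^{\al\ga}\bigr),
\]
using the key structural fact (a direct consequence of the adjoint corrector equation and the decomposition $B^2(\R^d;\R^{\overline m})=\mc V\oplus\mc W\oplus\R^{\overline m}$) that $q-\la q\ra\in\mc W^n$. Hence $q$ is approximated in $B^2$ by trigonometric polynomials $q^\tau$ which are \emph{exactly} divergence-free. For these, $\sum_\be D^\be q^\tau_l(x/\ve_l)=0$ identically, so the hypothesis of Lemma~\ref{hom_lem_div_curl} is trivially satisfied and no residual appears. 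The bounded-translation assumption is then used only to control the $B^2$-approximation error $\|q_l(\cdot/\ve_l)-q^\tau_l(\cdot/\ve_l)\|_{L^2(\Om)}$ by confining the relevant integrals to a fixed ball, exactly as on the left-hand side. This is the missing idea in your argument: approximate in $\mc W$ rather than in $\mc V$.
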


\begin{proof}Since $\{p_l\}:=\{(A^{\alpha\beta}_{l, ij}(x/\varepsilon_l)D^\beta u_{l, j})\}$ is uniformly bounded in $L^2(\Omega; \mathbb{R}^{\overline{m}\times n})$, it is sufficient to show $p_0=\widehat{A}^{\alpha\beta}D^\beta u$, assuming that $\{p_l\}$ is weakly convergent to $p_0$ in $L^2(\Omega; \mathbb{R}^{\overline{m}\times n})$.

Fix $1\leq k \leq n$ and $\gamma$ with $|\gamma|=m$ and denote $q_j^\beta=A^{\alpha\beta}_{ij}(\psi^{*\alpha\gamma}_{ik}+\delta_{ik}\delta^{\alpha\gamma})$. Consider the identity
\begin{equation}\label{hom_iden}\int_\Omega p_{l, i}^\alpha\cdot(\psi^{*\alpha\gamma}_{l, ik}(x/\varepsilon_l)+\delta_{ik}\delta^{\alpha\gamma})\varphi dx=\int_\Omega D^\beta u_{l, j}\cdot q_{l, j}^\beta(x/\varepsilon_l)\cdot\varphi dx,\end{equation}
where $\psi^{*}_{l}(x)=\psi^{*}(x+x_l)$, $q_{l, j}^\beta(x)=q_j^\beta(x+x_l)$, $\varphi\in C^\infty_c(\Omega)$ is arbitrary and $\delta$ is the Kronecker delta function. Since $\psi^{*\gamma}_k\in \mathcal{V}^n$, there exists a sequence $\{\Psi^\tau\}\subset\rm{Trig}(\mathbb{R}^d)$ such that $\langle\Psi^\tau\rangle=0$ and $D^\alpha\Psi^\tau\rightarrow \psi^{*\alpha\gamma}_k$ in $B^2$ as $\tau\rightarrow\infty$. By setting $\Psi_l^\tau(x)$=$\Psi^\tau(x+x_l)$, we have
\begin{align*}
\int_\Omega p_{l, i}^\alpha\cdot(\psi^{*\alpha\gamma}_{l, ik}(x/\varepsilon_l)+\delta_{ik}\delta^{\alpha\gamma})\varphi dx&=\int_\Omega p_{l, i}^\alpha\cdot(D^\alpha\Psi_{l, i}^\tau(x/\varepsilon_l)+\delta_{ik}\delta^{\alpha\gamma})\varphi dx\\&\quad+\int_\Omega p_{l, i}^\alpha\cdot(\psi^{*\alpha\gamma}_{l, ik}(x/\varepsilon_l)-D^\alpha\Psi_{l, i}^\tau(x/\varepsilon_l))\varphi dx\\
&\doteq I_{1, l}+I_{2, l}.
\end{align*}
Note that $D^\alpha p_{l}^\alpha=(-1)^m(f_l-\lambda_l u_l)\rightarrow (-1)^m(f-\lambda u)$ strongly in $H^{-m}(\Omega)$ and $D^\alpha\Psi_l^\tau(x/\varepsilon_l)\rightharpoonup0$ weakly in $L^2(\Omega; \mathbb{R}^{\overline{m}\times n})$ as $l\rightarrow \infty$. Applying Lemma \ref{hom_lem_div_curl} to $I_{1, l}$, we get
$$\lim\limits_{l\rightarrow\infty} I_{1, l}=\int_\Omega p_{0, k}^\gamma\cdot\varphi dx.$$
For the second part, we have
\begin{align*}
|I_{2, l}|&\leq C\sum_\alpha\Big(\int_\Omega |\psi^{*\alpha\gamma}_{l, k}(x/\varepsilon_l)-D^\alpha\Psi_l^\tau(x/\varepsilon_l)|^2 dx\Big)^{1/2}\\
&\leq C\sum_\alpha\Big(\int_{\varepsilon_lx_l+\Omega} |\psi^{*\alpha\gamma}_{k}(x/\varepsilon_l)-D^\alpha\Psi^\tau(x/\varepsilon_l)|^2 dx\Big)^{1/2}.
\end{align*}
Since $\{x_l\}$ is bounded, we can find some $R>0$ such that $\varepsilon_lx_l+\Omega \subset B(0, R)$ for each $l$, where $R$ depends only on $\Omega$ and the bound of $\{|x_l|\}$. As a result,
\begin{align*}
|I_{2, l}|&\leq C\sum_\alpha\Big(\fint_{B(0, R)} |\psi^{*\alpha\gamma}_{k}(x/\varepsilon_l)-D^\alpha\Psi^\tau(x/\varepsilon_l)|^2 dx\Big)^{1/2},
\end{align*}
which implies that $\lim\limits_{\tau\rightarrow\infty}\lim\limits_{l\rightarrow\infty}I_{2, l}\leq\lim\limits_{\tau\rightarrow\infty}C\|\psi^{*\gamma}_{k}-\nabla^m\Psi^\tau\|_{B^2}=0$. Therefore,
\begin{align}\label{hom_lim1}
\lim\limits_{l\rightarrow\infty}\int_\Omega p_{l, i}^\alpha\cdot(\psi^{*\alpha\gamma}_{ik}(x/\varepsilon_l)+\delta_{ik}\delta^{\alpha\gamma})\varphi dx=\int_\Omega p_{0 k}^\gamma\cdot\varphi dx.
\end{align}
For the r.h.s. of \eqref{hom_iden}, noticing that $q\in\mathcal{W}^n\oplus\mathbb{R}^{\overline{m}\times n}$ and applying a similar argument as above, we can obtain
\begin{align}\label{hom_lim2}
\lim\limits_{l\rightarrow\infty}\int_\Omega D^\beta u_{l, j}\cdot q_{l, j}^\beta(x/\varepsilon_l)\cdot\varphi dx&=\int_\Omega D^\beta u_j\cdot \widehat{A}^{\gamma\beta}_{kj}\cdot\varphi dx.
\end{align}
By combining \eqref{hom_iden}, \eqref{hom_lim1} and \eqref{hom_lim2}, we conclude that $$\int_\Omega p_{0, k}^\gamma\cdot\varphi dx=\int_\Omega D^\beta u_j\cdot \widehat{A}^{\gamma\beta}_{kj}\cdot\varphi dx,$$which yields $p_{0, k}^\gamma=\widehat{A}^{\gamma\beta}_{kj}D^\beta u_j$. The proof is completed.
\end{proof}
As a corollary, we know that the homogenized operator of $\mathcal{L}_\varepsilon$ is $\mathcal{L}_0:=(-1)^mD^\alpha(\widehat{A}^{\alpha\beta}D^\beta)$.
\begin{coro}\label{hom_coro_hom}
Let $u_\varepsilon\in H^m(\Omega; \mathbb{R}^n)$ be the weak solution to Dirichlet problem \eqref{intro_eq1} with $f\in H^{-m}(\Omega; \mathbb{R}^n)$ and $\dot{g}\in W\!A^{m, 2}(\partial\Omega, \mathbb{R}^n)$. Then as $\varepsilon\rightarrow0$, $u_\varepsilon\rightarrow u_0$ weakly in $H^m(\Omega; \mathbb{R}^n)$, where $u_0$ is the solution to problem \eqref{intro_eq_hom}.
\end{coro}

\begin{theorem}\label{hom_thm_compactness2}
Let $u_l\in H^m(\Omega; \mathbb{R}^n)$ be a weak solution to $\mathcal{L}^{A_{y_l}}_{\varepsilon_l}(u_l)+\lambda_l u_l=f_l$ in $\Omega$, where $\varepsilon_l \rightarrow 0$, $\lambda_l\geq 0$, $\lambda_l\rightarrow \lambda$ and $A_{y_l}(x):=A(x+y_l)$ for some $y_l\in \mathbb{R}^d$. Assume that $u_l\rightharpoonup u$ weakly in $H^m(\Omega; \mathbb{R}^n)$ and $f_l\rightarrow f$ strongly in $H^{-m}(\Omega; \mathbb{R}^n)$. Suppose further
\begin{align}
\lim_{L\rightarrow\infty}\lim_{r\rightarrow \infty}\sup_{y\in\mathbb{R}^d}\inf_{|z|\leq L}\Big(\fint_{B(0, r)}|\Delta_{yz}A|^2\Big)^{1/2}=0. \label{hom_condition_Deltayz}
\end{align}
Then $u$ is a weak solution to $\mathcal{L}_0(u)+\lambda u=f$ in $\Omega$.
\end{theorem}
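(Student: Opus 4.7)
The plan is to reduce Theorem \ref{hom_thm_compactness2} to Theorem \ref{hom_thm_compactness} via a perturbation argument driven by hypothesis \eqref{hom_condition_Deltayz}. Fix $R_0 > 0$ with $\Om \subset B(0, R_0)$. Given $\eta > 0$, condition \eqref{hom_condition_Deltayz} produces $L = L(\eta)$ and $r_0 = r_0(\eta)$ such that for every $r \geq r_0$ and $y \in \R^d$, some $z$ with $|z| \leq L$ satisfies $(\fint_{B(0,r)} |\Delta_{yz} A|^2)^{1/2} \leq \eta$. For $l$ large enough that $R_0/\ve_l \geq r_0$, I would choose $z_l = z_l(\eta)$ with $|z_l| \leq L$ realizing this inequality for $y = y_l$ and $r = R_0/\ve_l$. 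Changing variables $w = x/\ve_l$ together with $\Om/\ve_l \subset B(0, R_0/\ve_l)$ then yields
\begin{equation*}
\|A_{y_l}(\cdot/\ve_l) - A_{z_l}(\cdot/\ve_l)\|_{L^2(\Om)} \leq C\eta.
\end{equation*}

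Next, I would introduce the comparison solutions $\tilde u_l \in u_l + H^m_0(\Om; \R^n)$ defined by $\LL^{A_{z_l}}_{\ve_l}(\tilde u_l) + \lm_l \tilde u_l = f_l$ in $\Om$. Since $\{z_l\}$ is bounded by $L$ and $\{\tilde u_l\}$ is bounded in $H^m(\Om)$ by the standard energy estimate, Theorem \ref{hom_thm_compactness} applies and produces, along any weakly convergent subsequence, a limit $\tilde u$ satisfying $\LL_0(\tilde u) + \lm \tilde u = f$ in $\Om$. Because $\tilde u_l$ and $u_l$ share the same trace on $\pa\Om$, passing to the weak limit forces the trace of $\tilde u$ to equal that of $u$; uniqueness of weak solutions to the Dirichlet problem for $\LL_0 + \lm$ then identifies $\tilde u$ as a single element of $H^m(\Om)$ independent of $\eta$ and the subsequence, so the full sequence $\{\tilde u_l\}$ converges weakly to $\tilde u$. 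To finish, consider $w_l := u_l - \tilde u_l \in H^m_0(\Om)$, which satisfies
\begin{equation*}
\LL^{A_{z_l}}_{\ve_l}(w_l) + \lm_l w_l = (-1)^m D^\al \bigl[(A_{z_l} - A_{y_l})^{\al\be}(x/\ve_l)\,D^\be u_l\bigr].
\end{equation*}
Testing with $w_l$ and invoking the coercivity \eqref{intro_cond_el} (with $\lm_l \geq 0$) gives $\|\nabla^m w_l\|_{L^2(\Om)} \leq C \|(A_{z_l} - A_{y_l})(\cdot/\ve_l)\,\nabla^m u_l\|_{L^2(\Om)}$. Combining Hölder's inequality with the interpolation bound $\|(A_{z_l} - A_{y_l})(\cdot/\ve_l)\|_{L^s(\Om)} \leq C\eta^{2/s}$ (between the $L^\infty$ bound from \eqref{intro_cond_bdd} and the $L^2$ bound above) and the Meyers-type reverse Hölder inequality from Section \ref{sec_pre}, which furnishes a uniform estimate $\|\nabla^m u_l\|_{L^p(\Om)} \leq C$ for some $p > 2$, produces $\|w_l\|_{H^m(\Om)} \leq C\eta^{\tau}$ with $\tau := 2/s > 0$. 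Weak lower semicontinuity then gives $\|u - \tilde u\|_{H^m(\Om)} \leq C\eta^{\tau}$, and sending $\eta \to 0$ yields $u = \tilde u$, which proves the theorem.

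The main technical obstacle is securing the uniform global $L^p$ bound on $\nabla^m u_l$ with $p > 2$ independent of $l$, since the trace of $u_l$ is a priori only controlled in $W\!A^{m,2}$. A natural workaround is to test only against $v \in C_c^\infty(\Om; \R^n)$ (enough for the weak formulation by density) and replace the global comparison on $\Om$ by a localized one on $\Om' \Subset \Om$ covering the support of the test, so that only interior Meyers from Section \ref{sec_pre} is needed; this forces a more delicate localization step but preserves the overall perturbation strategy.
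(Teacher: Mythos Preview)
Your overall perturbation strategy---replace the unbounded shift $y_l$ by a bounded one $z_l$ via \eqref{hom_condition_Deltayz}, invoke Theorem~\ref{hom_thm_compactness} for the comparison sequence, and control the difference $w_l=u_l-\tilde u_l$---is exactly the paper's. The gap is in how you estimate $w_l$. Testing the equation for $w_l$ against itself gives
\[
\|\nabla^m w_l\|_{L^2(\Om)} \le C\,\bigl\|(A_{y_l}-A_{z_l})(\cdot/\ve_l)\,\nabla^m u_l\bigr\|_{L^2(\Om)},
\]
and to make the right side small via H\"older you need $\nabla^m u_l\in L^p(\Om)$ (or $L^p(\Om')$) for some $p>2$, uniformly in $l$. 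But the only hypothesis on the data is $f_l\to f$ in $H^{-m}(\Om)$: writing $f_l=\sum_{|\al|\le m}D^\al f_{\al,l}$ you get $f_{\al,l}\in L^2$ and nothing better. The Meyers-type reverse H\"older of Lemma~\ref{pre_lem_reverse} (interior or global) produces $\nabla^m u_l\in L^q$ only when the top-order data satisfy $f_{\al,l}\in L^q$ for $|\al|=m$; with merely $L^2$ data the self-improving step yields no gain. So neither the global Meyers you first invoke nor the interior version in your proposed localization is available, and the bound $\|w_l\|_{H^m}\le C\eta^\tau$ does not follow.

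The paper sidesteps this by estimating $\|w_{l,L}\|_{L^2(\Om)}$ through duality rather than energy. For $F\in L^2(\Om)$ one solves the adjoint problem $\LL^{A^*_{z_{l,L}}}_{\ve_l}\tilde u_{l,L}+\lm_l\tilde u_{l,L}=F$ in $H^m_0(\Om)$; because this dual problem has right-hand side in $L^2$ (not merely $H^{-m}$) and zero Dirichlet data, the global $W^{m,q}$ estimate of Dong--Kim applies and gives $\|\nabla^m\tilde u_{l,L}\|_{L^q(\Om)}\le C\|F\|_{L^2}$ for some $q>2$. Pairing then yields
\[
\langle w_{l,L},F\rangle=\int_\Om (A_{y_l}-A_{z_{l,L}})(\tfrac{x}{\ve_l})\,D^\be u_l\,D^\al\tilde u_{l,L}
\le C\,\|\Delta_{y_lz_{l,L}}A\|_{L^p(\Om/\ve_l)}\,\|\nabla^m u_l\|_{L^2}\,\|\nabla^m\tilde u_{l,L}\|_{L^q},
\]
with $\tfrac1p+\tfrac1q=\tfrac12$, so the higher integrability sits on the dual solution where it is actually available. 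This gives $\lim_{L\to\infty}\lim_{l\to\infty}\|w_{l,L}\|_{L^2(\Om)}=0$, after which the passage $v^L\to u$ and the conclusion proceed as you outlined.
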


\begin{proof}
Let $R$ be a positive constant, depending only on $\Omega$, such that $\Omega\subset B(0, R)$. For each $l$, we can find $z_{l, L}\in\mathbb{R}^d$ with $|z_{l, L}|\leq L$ such that
\begin{align}
\Big(\fint_{\frac{\Omega}{\varepsilon_l}}|\Delta_{y_l z_{l, L}}A|^2\Big)^{1/2}\leq \sup_{y\in\mathbb{R}^d}\inf_{|z|\leq L}\Big(\fint_{\frac{\Omega}{\varepsilon_l}}|\Delta_{yz}A|^2\Big)^{1/2}\leq C\sup_{y\in\mathbb{R}^d}\inf_{|z|\leq L}\Big(\fint_{B(0, \frac{R}{\varepsilon_l})}|\Delta_{yz}A|^2\Big)^{1/2},\label{hom_ineq_compact_yz}
\end{align}
where $\Omega/\varepsilon_l=\{x\in \mathbb{R}^d: \varepsilon_l x\in\Omega\}$ and $C$ depends only on $\Omega$.
Consider the auxiliary system
\begin{equation*}
\begin{cases}
\mathcal{L}_{\varepsilon_l}^{A_{z_{l, L}}}v_{l, L}+\lambda_l v_{l, L}=f_l & \textrm{ in } \Omega,\\
Tr(D^\gamma v_{l, L})=D^\gamma u_l  & \text{ on } \partial \Omega, \text{ for  } 0\leq|\gamma|\leq m-1,
\end{cases}
\end{equation*}
and set $w_{l, L}=v_{l, L}-u_l$. Then $w_{l, L}\in H_0^m(\Omega; \mathbb{R}^n)$ satisfies
\begin{align}\label{hom_eq_auxiliary2}
\mathcal{L}^{A_{z_{l, L}}}_{\varepsilon_l}w_{l, L}+\lambda_l w_{l, L}=(-1)^m\sum_{|\alpha|=|\beta|=m}D^{\alpha}[(A^{\alpha\beta}_{y_l}-A^{\alpha\beta}_{z_{l, L}})(\frac{x}{\varepsilon_l})D^\beta u_l]~\textrm{ in } \Omega.
\end{align}
For $F\in L^2(\Omega; \mathbb{R}^n)$, let $\widetilde{u}_{l, L}\in H^m_0(\Omega; \mathbb{R}^n)$ be the weak solution to
\begin{equation*}
\begin{cases}
\mathcal{L}_{\varepsilon_l}^{A^*_{z_{l, L}}}\widetilde{u}_{l, L}+\lambda_l \widetilde{u}_{l, L}=F & \textrm{ in } \Omega,\\
Tr(D^\gamma \widetilde{u}_{l, L})=0  & \text{ on } \partial \Omega, \text{ for  } 0\leq|\gamma|\leq m-1.
\end{cases}
\end{equation*}
Thanks to the $W^{m, p}$ estimate for higher-order elliptic systems in \cite{Dong2011_Higher}, there exists a constant $q>2$, depending only on $m, n, \Omega, \mu$, such that,
\begin{align}
\|\nabla^m \widetilde{u}_{l, L}\|_{L^q(\Omega)} \leq C[\|F\|_{L^2(\Omega)}+\lambda_l\|\widetilde{u}_{l, L}\|_{L^2(\Omega)}]
 \leq C\|F\|_{L^2(\Omega)},\label{hom_ineq_reverse}
\end{align}
where $C$ depends only on $m ,n, \Omega, \mu$ and the upper bound of $\{\lambda_l\}$.
By \eqref{hom_ineq_compact_yz}--\eqref{hom_ineq_reverse}, we deduce that
\begin{align*}
\langle w_{l, L}, F\rangle_{L^2(\Omega)\times L^2(\Omega)}&=\int_\Omega(A^{\alpha\beta}_{y_l}-A^{\alpha\beta}_{z_{l, L}})(\frac{x}{\varepsilon_l})D^\beta u_l D^\alpha \widetilde{u}_{l, L}\nonumber\\
&\leq C\Big(\fint_\Omega\Big|(A_{y_l}-A_{z_{l, L}})(\frac{x}{\varepsilon_l})\Big|^p dx\Big)^{1/p}\Big(\int_\Omega|\nabla^m \widetilde{u}_{l, L}|^q\Big)^{1/q}\nonumber\\
&\leq C\|F\|_{L^2(\Omega)}\sup_{y\in\mathbb{R}^d}\inf_{|z|\leq L}\Big(\fint_{B(0, R/\varepsilon_l)}|\Delta_{yz}A|^2\Big)^{1/p},
\end{align*}
where $p>2$ satisfies $1/p+1/q=1/2$. This, together with \eqref{hom_condition_Deltayz}, implies that
\begin{align}
\lim_{L\rightarrow\infty}\lim_{l\rightarrow \infty}\|w_{l, L}\|_{L^2(\Omega)}=0. \label{hom_es_wl}
\end{align}

Now for each $L$, we may assume that  $v_{l, L}\rightharpoonup v^L$ in $H^m(\Omega; \mathbb{R}^n)$ as $l\rightarrow \infty$. Since $|z_{l, L}|\leq L$, according to Theorem \ref{hom_thm_compactness}, $v^L$ is a weak solution of $\mathcal{L}_0(u)+\lambda u=f$ in $\Omega$. Note that $w_{l, L}=v_{l, L}-u_l\rightharpoonup v^L-u$ in $H^m(\Omega; \mathbb{R}^n)$ as $l\rightarrow \infty$, and by \eqref{hom_es_wl},
\begin{align*}
\lim_{L\rightarrow\infty}\|v^L-u\|_{L^2(\Omega)}=0.
\end{align*}
Consequently, $v^L\rightharpoonup u$ weakly in $H^m(\Omega; \mathbb{R}^n)$ and $u$ is a weak solution to $\mathcal{L}_0(u)+\lambda u=f$ in $\Omega$. This completes the proof.
\end{proof}

\begin{remark}
If $A\in APW^2(\mathbb{R}^d)$, then condition \eqref{hom_condition_Deltayz} holds.
\end{remark}

\section{Some technical lemmas}\label{sec_pre}

\begin{lemma}\label{pre_lem_poincare}
Let $\mathcal{P}_{m-1}$ be the space of polynomials of degree at most $(m-1)$. Then there exists a family of linear operators $\{P_{m-1}(\cdot; x_0, r): W^{m, \frac{2d}{d+2}}(B(x_0, r))\rightarrow\mathcal{P}_{m-1}\}$, $x_0\in \mathbb{R}^d$, $r>0$, satisfying the following properties:
\begin{enumerate}
\item[(i)] for any $P\in\mathcal{P}_{m-1}$, $P_{m-1}(P; x_0, r)=P$;
\item[(ii)] for $v(x)=u(rx+x_0)$, $P_{m-1}(v; 0, 1)(x)=P_{m-1}(u; x_0, r)(rx+x_0)$;
\item[(iii)] for any $u\in H^m(B(x_0 ,r))$, $\frac{2d}{d+2}\leq p\leq 2$, the coefficient of $(x-x_0)^\alpha$ in $P_{m-1}(u; x_0, r)$ is bounded by $Cr^{-d/p-|\alpha|}\|u\|_{L^p(B(x_0, r))}$, where $C$ depends only on $p$. Thus,  if $u_l\rightarrow u$ in $L^2(B(x_0, r))$, then the coefficients of $P_{m-1}(u_l; x_0, r)$ converges to those of $P_{m-1}(u; x_0, r)$;
\item[(iv)] for any $u\in H^m(B(x_0, r))$, $\frac{2d}{d+2}\leq p\leq 2$, \begin{equation}\label{pre_ineq_poincare} \|u-P_{m-1}(u; x_0, r)\|_{L^2(B(x_0, r))}\leq Cr^{m+\frac{d}{2}-\frac{d}{p}}\|\nabla^m u\|_{L^p(B(x_0, r))},\end{equation}where $C$ depends only on $p$ and $m$.
\end{enumerate}
\end{lemma}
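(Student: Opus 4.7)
The plan is to construct $P_{m-1}(\cdot; x_0, r)$ as the $L^2$-orthogonal projection onto $\mc{P}_{m-1}$, packaged so that property (ii) is built in. Fix once and for all an $L^2(B(0,1))$-orthonormal basis $\{q_\al\}_{|\al|\leq m-1}$ of $\mc{P}_{m-1}$, and for $u\in L^2(B(x_0,r))$ let $v(y):=u(ry+x_0)$ and set
$$P_{m-1}(u;x_0,r)(x):=\sum_{|\al|\leq m-1}\Big(\int_{B(0,1)}v(y)q_\al(y)\,dy\Big)\,q_\al\Big(\frac{x-x_0}{r}\Big).$$
Property (i) is immediate because any $P\in\mc{P}_{m-1}$ pulls back to a polynomial in $\mc{P}_{m-1}$ whose expansion in the orthonormal basis $\{q_\al\}$ reproduces it, and (ii) is precisely the defining change of variables.

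For (iii), Hölder's inequality with $1/p+1/p'=1$ gives $|\int_{B(0,1)}v\,q_\al|\leq \|q_\al\|_{L^{p'}(B(0,1))}\|v\|_{L^p(B(0,1))}$, and a scaling computation yields $\|v\|_{L^p(B(0,1))}=r^{-d/p}\|u\|_{L^p(B(x_0,r))}$. Rewriting each $q_\al((x-x_0)/r)$ in the monomial basis $\{(x-x_0)^\be\}_{|\be|\leq m-1}$ contributes an additional factor $r^{-|\be|}$ to the coefficient of $(x-x_0)^\be$, yielding the bound in (iii); the linearity and $L^2$-continuity of $u\mapsto P_{m-1}(u;x_0,r)$ then immediately give the stated convergence of coefficients when $u_l\to u$ in $L^2$.

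The main obstacle is (iv), where I expect the bulk of the technical work to lie. After rescaling by $v(y)=u(ry+x_0)$, the desired estimate reduces to the fixed-domain Bramble--Hilbert--type inequality
$$\|v-P_{m-1}(v;0,1)\|_{L^2(B(0,1))}\leq C\|\nabla^m v\|_{L^p(B(0,1))},\qquad \tfrac{2d}{d+2}\leq p\leq 2,$$
which I would establish by compactness and contradiction. Assuming it fails, extract a sequence $v_k$ and, by replacing $v_k$ with $v_k-P_{m-1}(v_k;0,1)$, normalize $P_{m-1}(v_k)=0$, $\|v_k\|_{L^2}=1$, and $\|\nabla^m v_k\|_{L^p}\to 0$. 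Since $p\leq 2$ gives $\|v_k\|_{L^p}\leq C$, the classical Sobolev-norm equivalence $\|v\|_{W^{m,p}(B(0,1))}\leq C(\|v\|_{L^p}+\|\nabla^m v\|_{L^p})$ on the Lipschitz domain $B(0,1)$ produces a uniform $W^{m,p}$-bound. The hypothesis $p\geq 2d/(d+2)\geq 2d/(d+2m)$ makes the Rellich--Kondrachov embedding $W^{m,p}(B(0,1))\hookrightarrow L^2(B(0,1))$ compact, so along a subsequence $v_k\to v^\ast$ in $L^2$ and weakly in $W^{m,p}$, with $\|v^\ast\|_{L^2}=1$; lower semicontinuity of $\|\nabla^m\cdot\|_{L^p}$ forces $\nabla^m v^\ast=0$, hence $v^\ast\in\mc{P}_{m-1}$, while $L^2$-continuity of $P_{m-1}$ gives $P_{m-1}(v^\ast)=0$, so $v^\ast=0$---a contradiction. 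Undoing the rescaling produces exactly the factor $r^{m+d/2-d/p}$ in (iv). The only subtle bookkeeping is to check that the permissible range $2d/(d+2)\leq p\leq 2$ indeed suffices for both the Sobolev-norm equivalence and the compact embedding, but both are classical on a Lipschitz domain.
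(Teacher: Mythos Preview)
Your construction via the $L^2(B(0,1))$-orthogonal projection is a perfectly good choice, and in fact it is a concrete instance of the paper's construction: the paper fixes functions $f_\al\in L^{2^*}(B(0,1))$ biorthogonal to the monomials, $\int f_\al x^\be=\delta_{\al\be}$, and defines $P_{m-1}(u;x_0,r)=\sum_\al r^{-d-|\al|}(x-x_0)^\al\int u(y)f_\al((y-x_0)/r)\,dy$; your choice corresponds to taking $f_\al$ to be the polynomial dual basis. For (iv) the paper simply invokes a ready-made Poincar\'e--Sobolev inequality (Theorems~8.11--8.12 in Lieb--Loss), whereas you run a Bramble--Hilbert compactness argument. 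Both routes are legitimate, and yours is more self-contained.

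There is, however, a genuine gap in your compactness step at the endpoint $p=\tfrac{2d}{d+2}$ when $m=1$ (which the paper does allow). In that case $\tfrac{2d}{d+2}=\tfrac{2d}{d+2m}$ is exactly the critical Sobolev exponent for the embedding $W^{m,p}(B(0,1))\hookrightarrow L^2(B(0,1))$, and Rellich--Kondrachov gives only a continuous, \emph{not} compact, embedding there. So your claim that ``both are classical'' is false at this endpoint, and the contradiction argument as written breaks down: from a weakly convergent subsequence you cannot conclude $\|v^\ast\|_{L^2}=1$. For $m\ge 2$ your inequality $\tfrac{2d}{d+2}>\tfrac{2d}{d+2m}$ is strict and the argument is fine.

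The fix is easy and worth writing in: first run your compactness argument with $L^2$ replaced by $L^p$ on the left-hand side, which works for the full range since $W^{m,p}(B(0,1))\hookrightarrow L^p(B(0,1))$ is always compact. This yields $\|v-P_{m-1}(v;0,1)\|_{L^p}\le C\|\nabla^m v\|_{L^p}$. Then apply the (continuous) Sobolev embedding $\|w\|_{L^2}\le C\|w\|_{W^{m,p}}\le C(\|w\|_{L^p}+\|\nabla^m w\|_{L^p})$ to $w=v-P_{m-1}(v;0,1)$ to conclude.
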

\begin{proof}
According to Theorem 8.11 and 8.12 in \cite{Lieb2001}, for any bounded Lipschitz domain $\Omega$, by choosing $\{f_\alpha: |\alpha|\leq m-1\}\subset L^{2^*}(\Omega)$ with \begin{align}\int_\Omega f_\alpha(x)x^\beta dx=\delta_{\alpha\beta}, ~\textrm{for any}~ |\alpha|, |\beta|\leq m-1,\label{pre_iden_delta}\end{align}we have for $\frac{2d}{d+2}\leq p\leq 2$
\begin{equation}\label{pre_ineq_poin}
\bigg\|u-\sum_{|\alpha|\leq m-1}x^\alpha\int_\Omega uf_\alpha\bigg\|_{H^{m-1}(\Omega)}\leq C\|\nabla^m u\|_{L^p(\Omega)},
\end{equation}
where $C$ depends only on $\Omega, f_\alpha, p$ and $m$. Now fix $\{f_\alpha\}$ for $\Omega=B(0, 1)$ and define for $x_0\in\mathbb{R}^d, r>0,$ $$P_{m-1}(u; x_0, r):=\sum_{|\alpha|\leq m-1}r^{-d-|\alpha|}(x-x_0)^\alpha\int_{B(x_0, r)} u(y)f_{\alpha}(\frac{y-x_0}{r})dy.$$ Obviously, $P_{m-1}$ is linear, and, in view of \eqref{pre_iden_delta} and \eqref{pre_ineq_poin}, it is not hard to verify that $P_{m-1}(\cdot; x_0, r)$ satisfies properties (i)--(iv).
\end{proof}

\begin{remark}
The operator $P_{m-1}(\cdot; x_0, r)$, depending on the choice of $\{f_\alpha\}$, may not be unique. The coefficients of $P_{m-1}(u; x_0, r)$ depend only on the $L^p$-norm of $u$, but never  on the norms of the derivatives of $u$.
\end{remark}

\begin{lemma}\label{pre_lem_cacci}
Assume that $A$ satisfies \eqref{intro_cond_bdd}--\eqref{intro_cond_el}. Let $B=B(x_0, r)$, $2B=B(x_0, 2r)$ be balls in $\mathbb{R}^d$, and $u\in H^m(2B; \mathbb{R}^n )$ be a solution to $\mathcal{L}_1 u+\lambda u= \sum_{|\alpha|\leq m}D^\alpha f_\alpha$ in $2B$, where $\lambda\geq 0$ and $f_\alpha \in L^2(2B; \mathbb{R}^n )$ for $|\alpha|\leq m$. Then
 there exists a constant $C$, depending only on $d, m, n$ and $\mu$, such that for $0\leq j\leq m$,
\begin{align}
\int_B |\nabla^j u|^2\leq \frac{C}{r^{2j}} \int_{2B}|u|^2+C\lambda^2 r^{4m-2j}\int_{2B}|u|^2
+C\sum_{|\alpha|\leq m} r^{4m-2|\alpha|-2j} \int_{2B} |f_\alpha|^2.\label{pre_ineq_cacci1}
\end{align}
Moreover, if $\lambda>0$, we also have
\begin{align}
\sum_{k\leq m}\lambda^{\frac{m-k}{m}}\int_B |\nabla^k u|^2\leq   \frac{C}{r^{2m}} \int_{2B}|u|^2 +C\sum_{|\alpha|\leq m}\lambda^{\frac{|\alpha|-m}{m}} \int_{2B} |f_\alpha|^2. \label{pre_ineq_cacci2}
\end{align}
\end{lemma}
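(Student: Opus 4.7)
The plan is to establish (3.1) first, reducing the case $\lambda > 0$ to a $\lambda$-free Caccioppoli inequality by absorbing the zeroth-order term into the data, and then to derive (3.2) by running the same energy estimate while retaining the $\lambda$-term on the left-hand side.

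For the base case $\lambda = 0$ I would use the classical hole-filling / cutoff argument adapted to order $2m$. Fix radii $r \leq t < s \leq 2r$ and choose $\eta \in C_c^\infty(B(x_0, s))$ with $\eta \equiv 1$ on $B(x_0, t)$ and $|D^k\eta| \leq C(s-t)^{-k}$ for $0 \leq k \leq 2m$. Testing the weak formulation against $\phi = \eta^{2m} u$ yields
\begin{align*}
\sum_{|\al|=|\be|=m}\int A^{\al\be}_{ij}\, D^\be u_j\, D^\al(\eta^{2m}u_i)
=\sum_{|\al|\leq m}(-1)^{|\al|}\int f_\al\, D^\al(\eta^{2m}u).
\end{align*}
The key algebraic identity is that, after expanding with Leibniz,
\begin{align*}
\sum_{|\al|=|\be|=m}\int A^{\al\be}\, \eta^{2m}\, D^\be u\, D^\al u
=\sum_{|\al|=|\be|=m}\int A^{\al\be}\, D^\be(\eta^m u)\, D^\al(\eta^m u) + E,
\end{align*}
where every integrand in $E$ contains at least one factor of the form $|\nabla^k u|$ with $k<m$. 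Since $\eta^m u \in H^m_0(\R^d)$, the coercivity condition \eqref{intro_cond_el} bounds the first term on the right from below by $\mu\|\nabla^m(\eta^m u)\|^2_{L^2}$. Applying Young's inequality to every term in $E$ and to the data pairings $\int f_\al D^\al(\eta^{2m}u)$ (weighted by appropriate powers of $s-t$), I obtain
\begin{align*}
\|\nabla^m u\|^2_{L^2(B_t)}
\leq \frac{C}{(s-t)^{2m}}\|u\|^2_{L^2(B_s)}
+\tfrac{1}{2}\|\nabla^m u\|^2_{L^2(B_s)}
+C\sum_{|\al|\leq m}(s-t)^{2m-2|\al|}\|f_\al\|^2_{L^2(B_s)}.
\end{align*}
A standard iteration lemma absorbs the half-term, giving (3.1) for $j=m$ (with $\lm=0$); the cases $0 \leq j < m$ follow from Gagliardo–Nirenberg interpolation between $\|\nabla^m u\|_{L^2}$ and $\|u\|_{L^2}$ on concentric subballs. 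To recover the general $\lm\geq 0$ version of (3.1), I would rewrite $\LL_1 u = \sum_{|\al|\leq m}D^\al \tilde f_\al$ with $\tilde f_0 = f_0 - \lm u$ and $\tilde f_\al = f_\al$ otherwise; applying the $\lm$-free estimate produces precisely the extra term $C\lm^2 r^{4m-2j}\int_{2B}|u|^2$.

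For (3.2) with $\lm>0$ I would repeat the argument but keep the nonnegative contribution $\lm\int\eta^{2m}|u|^2$ on the left-hand side, balancing the data pairings by the weighted Young inequality $|\int f_\al D^\al(\eta^{2m}u)| \leq \epsilon\, \lm^{(m-|\al|)/m}\|\nabla^{|\al|}(\eta^{2m}u)\|^2 + C_\epsilon\,\lm^{(|\al|-m)/m}\|f_\al\|^2$. This yields
\begin{align*}
\|\nabla^m u\|^2_{L^2(B)}+\lm\|u\|^2_{L^2(B)}
\leq \frac{C}{r^{2m}}\|u\|^2_{L^2(2B)}
+C\sum_{|\al|\leq m}\lm^{(|\al|-m)/m}\|f_\al\|^2_{L^2(2B)}
+C\sum_{k<m} r^{-2(m-k)}\|\nabla^k u\|^2_{L^2(2B)}.
\end{align*}
The lower-order sum is eliminated using the Gagliardo–Nirenberg/Young interpolation
\begin{align*}
\lm^{(m-k)/m}\|\nabla^k u\|^2
\leq C\bigl(\|\nabla^m u\|^2\bigr)^{k/m}\bigl(\lm\|u\|^2\bigr)^{(m-k)/m}
\leq \epsilon\|\nabla^m u\|^2+C_\epsilon\,\lm\|u\|^2,
\end{align*}
so that for each intermediate $k$ the quantity $\lm^{(m-k)/m}\|\nabla^k u\|^2$ is dominated by the left-hand side, delivering the full sum $\sum_{k\leq m}\lm^{(m-k)/m}\int_B|\nabla^k u|^2$ on the left of (3.2).

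The main technical obstacle is the combinatorial bookkeeping of the many Leibniz cross terms arising when expanding $D^\al(\eta^{2m}u)$ and $D^\al(\eta^m u)$: one must verify that every error term in $E$ genuinely contains a factor $|\nabla^k u|$ with $k<m$ on at least one side (so that it can be absorbed via Young's inequality into $\|\nabla^m(\eta^m u)\|^2_{L^2}$ and a lower-order $L^2$-norm of $u$), and also that the weights of the $r$-powers in the data terms and of the $\lm$-powers in (3.2) come out exactly as stated. A secondary subtlety is that the interpolation step in $\lm$ requires Gagliardo–Nirenberg on a ball rather than on $\R^d$, so the standard proof needs to be combined with the cutoff in a way consistent with the hole-filling iteration.
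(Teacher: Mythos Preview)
Your treatment of \eqref{pre_ineq_cacci1} is the standard argument the paper invokes by citation: cut-off testing with $\eta^{2m}u$, Leibniz expansion, coercivity applied to $\eta^m u$, hole-filling iteration, interpolation for intermediate $j$, and finally absorbing $\lambda u$ into $f_0$. That part is fine.

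For \eqref{pre_ineq_cacci2} there is a concrete gap. After the cut-off step you arrive at a right-hand side containing $\sum_{k<m} r^{-2(m-k)}\|\nabla^k u\|_{L^2(2B)}^2$, and you propose to eliminate it via the interpolation
\[
\lambda^{(m-k)/m}\|\nabla^k u\|^2\le\epsilon\|\nabla^m u\|^2+C_\epsilon\,\lambda\|u\|^2.
\]
But this bounds the $\lambda$-weighted quantity, not the $r$-weighted one; the two agree only when $\lambda r^{2m}\sim 1$, which is nowhere assumed. So the displayed interpolation legitimately builds up the full left-hand side of \eqref{pre_ineq_cacci2}, yet does \emph{not} remove the intermediate-derivative terms on the right. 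A second issue is that those terms live on $2B$ while your left-hand side lives on $B$, so an iteration over concentric balls is still needed even if the weights matched.

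The paper closes both issues by first rescaling to $\lambda=1$ (so that all $\lambda$-weights become unit constants), then proving the \emph{annulus} inequality
\[
\int_{B_\rho}|\nabla^m u|^2+\int_{B_\rho}|u|^2\le C\sum_{k<m}(s-\rho)^{-2(m-k)}\int_{B_s\setminus B_\rho}|\nabla^k u|^2+C\sum_{|\alpha|\le m}\int_{2B}|f_\alpha|^2,
\]
and finally invoking an iteration lemma (Barton, Theorem~18) tailored to absorb such annulus terms. The derivation of the annulus inequality itself requires a case split $s-\rho\le1$ versus $s-\rho\ge1$; in the latter case the cross terms with intermediate derivatives of $u\varphi^m$ are handled by interpolating on $\R^d$ between $\|\nabla^m(u\varphi^m)\|^2$ and $\|u\varphi^m\|^2$, which works precisely because after rescaling the zeroth-order term $\int|u\varphi^m|^2$ sits on the left with coefficient $\lambda=1$. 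Your sketch can be repaired along the same lines, but the $\lambda$-weighted Gagliardo--Nirenberg step as written does not do the job.
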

\begin{proof}
Estimate \eqref{pre_ineq_cacci1} can be proved in the same way as \cite[Corollary 22]{Barton2016_Gradient} by an induction argument (see \cite[Lemma 2.4]{Niu2019_Boundary}).
To prove \eqref{pre_ineq_cacci2}, by interpolation and rescaling, it is sufficient to show for $\lambda=1$,
\begin{align}\label{pre_ineq_cacci_lm}
\int_B |\nabla^m u|^2+\int_B |u|^2\leq   \frac{C}{r^{2m}} \int_{2B}|u|^2
+C\sum_{|\alpha|\leq m} \int_{2B} |f_\alpha|^2.
\end{align}
We will prove that
\begin{equation}\label{pre_ineq_cacci_claim}
\int_{B(x_0, \rho)}|\nabla^m u|^2+\int_{B(x_0, \rho)}|u|^2\leq \sum_{k<m}\frac{C}{(s-\rho)^{2m-2k}}\int_{B(x_0, s)\setminus B(x_0, \rho)}|\nabla^k u|^2+C\sum_{|\alpha|\leq m}\int_{2B}|f_\alpha|^2,
\end{equation}
whenever $0<\rho<s<2r$, which, together with Theorem 18 in \cite{Barton2016_Gradient}, gives \eqref{pre_ineq_cacci_lm}. Let $\varphi$ be a function in $C_c^\infty(B(x_0, s))$ such that $\varphi\equiv1$ in $B(x_0, \rho)$ and $|\nabla^k \varphi|\leq \frac{C}{|s-\rho|^k}$ in $B(x_0, s)$. Taking $u\varphi^{2m}$ as the test function, we obtain
\begin{gather}\int_{\mathbb{R}^d}|\nabla^m (u\varphi^{m})|^2+\int_{\mathbb{R}^d}|u\varphi^{m}|^2\leq C\sum_{k<m}(s-\rho)^{-2(m-k)}\int_{B(x_0, s)\setminus B(x_0, \rho)}|\nabla^k u|^2\nonumber\\+C_\delta\sum_{|\alpha|\leq m}\int_{2B}|f_\alpha|^2+\delta\sum_{|\alpha_1+\alpha_2|\leq m}(s-\rho)^{-2|\alpha_2|}\int_{B(x_0, s)\setminus B(x_0, \rho)}|D^{\alpha_1}(u\varphi^{m})|^2\varphi^{2m-2|\alpha_2|},\label{pre_ineq_cacci_test}\end{gather}
where $\delta$ is a small constant and $C_\delta$ depends on $\delta$. If $s-\rho\leq1$, then $(s-\rho)^{-2|\alpha_2|}\leq (s-\rho)^{-2(m-|\alpha_1|)}$ for $|\alpha_1+\alpha_2|\leq m$. This, together with \eqref{pre_ineq_cacci_test}, implies \eqref{pre_ineq_cacci_claim} for $\delta$ small. If $s-\rho\geq1$, then $(s-\rho)^{-2|\alpha_2|}\leq C$. Therefore, by interpolation,
\begin{align*}\sum_{|\alpha_1+\alpha_2|\leq m}(s-\rho)^{-2|\alpha_2|}\int_{B(x_0, s)\setminus B(x_0, \rho)}|D^{\alpha_1}(u\varphi^{m})|^2\varphi^{2m-2|\alpha_2|}\leq C\int_{\mathbb{R}^d}|\nabla^m (u\varphi^{m})|^2+C\int_{\mathbb{R}^d}|u\varphi^{m}|^2,
\end{align*}
which, combined with \eqref{pre_ineq_cacci_test}, implies \eqref{pre_ineq_cacci_claim} for $\delta$ small. The  proof is thus completed.
\end{proof}
\begin{remark}Since $u-P_{m-1}$ satisfies $\mathcal{L}_1 (u-P_{m-1})= \sum_{|\alpha|=m}D^\alpha f_\alpha-\lambda u$ for any $P_{m-1}\in \mathcal{P}_{m-1}$, it follows that
\begin{align}\label{pre_ineq_cacci_remark}
\int_B |\nabla^m u|^2\leq \frac{C}{r^{2m}} \int_{2B}|u-P_{m-1}|^2+C\lambda^2 r^{2m}\int_{2B}|u|^2
+C\sum_{|\alpha|\leq m} r^{2m-2|\alpha|} \int_{2B} |f_\alpha|^2.
\end{align}
\end{remark}

\begin{lemma}\label{pre_lem_reverse}
Assume that the assumptions of Lemma \ref{pre_lem_cacci} hold. Then there exists some $q^+>2$ depending only on $d, m, n$ and $\mu$, such that, for $2\leq q\leq q^+$,
\begin{enumerate}
\item[(i)] if $\lambda=0$,
\begin{align}
\Big(\fint_B |\nabla^m u|^q\Big)^{1/q}&\leq C\bigg\{\Big(\fint_{2B}|\nabla^m u|^2\Big)^{1/2} \nonumber\\&\quad\quad\,+\sum_{|\alpha|<m}r^{m-|\alpha|}\Big(\fint_{2B}|f_\alpha|^2\Big)^{1/2}+ \sum_{|\alpha|=m}\Big(\fint_{2B}|f_\alpha|^q\Big)^{1/q}\bigg\};\label{pre_ineq_reverse1}
\end{align}
\item[(ii)] if $\lambda>0$,
\begin{align}
&\quad\Big(\fint_B |\nabla^m u|^q\Big)^{1/q}+\sqrt{\lambda}\Big(\fint_B |u|^q\Big)^{1/q}\nonumber\\
& \leq C\bigg\{\Big(\fint_{2B}|\nabla^m u|^2\Big)^{1/2}+\sqrt{\lambda}\Big(\fint_{2B}|u|^2\Big)^{1/2}+\sum_{|\alpha|\leq m}\lambda^{\frac{|\alpha|-m}{2m}}\Big(\fint_{2B}|f_\alpha|^q\Big)^{1/q}\bigg\};\label{pre_ineq_reverse2}
\end{align}
\end{enumerate}
where $C$ depends only on $d, m, n$ and $\mu$.
\end{lemma}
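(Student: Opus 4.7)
\textbf{Proof proposal for Lemma~\ref{pre_lem_reverse}.} The plan is to apply the classical Meyers--Giaquinta higher-integrability strategy: combine the Caccioppoli inequalities of Lemma~\ref{pre_lem_cacci} with the Poincaré--Sobolev estimate of Lemma~\ref{pre_lem_poincare} to obtain a weak reverse Hölder inequality at some exponent $p \in [2d/(d+2), 2)$ on the right-hand side, and then invoke Gehring's self-improving lemma to produce some $q^+ > 2$, depending only on $d, m, n, \mu$, above which the stated bounds hold.

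For part (i), since $\lm = 0$ and $\LL_1$ annihilates polynomials of degree less than $m$, the function $u - P_{m-1}(u;x_0,2r)$ satisfies the same equation as $u$. Applying the version of Caccioppoli from the remark after Lemma~\ref{pre_lem_cacci} gives
$$\int_B|\nabla^m u|^2 \leq \frac{C}{r^{2m}}\int_{2B}|u - P_{m-1}|^2 + C\sum_{|\al|<m}r^{2(m-|\al|)}\int_{2B}|f_\al|^2 + C\int_{2B}|f_m|^2.$$
Choosing $p = 2d/(d+2)$ in Lemma~\ref{pre_lem_poincare}(iv), one gets $\|u - P_{m-1}\|_{L^2(2B)} \leq Cr^{m-1}\|\nabla^m u\|_{L^p(2B)}$. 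A direct count of $r$-powers shows that the resulting exponent $r^{2(m-1)}$ from Poincaré, combined with $|B|^{2/p-1} = Cr^2$ from $\fint$-normalization, exactly cancels the prefactor $r^{-2m}$, producing the sub-natural reverse Hölder inequality
$$\fint_B|\nabla^m u|^2 \leq C\Big(\fint_{2B}|\nabla^m u|^p\Big)^{2/p} + C\sum_{|\al|<m}r^{2(m-|\al|)}\fint_{2B}|f_\al|^2 + C\fint_{2B}|f_m|^2.$$
Gehring's lemma (in its standard forcing form, tolerating the $L^q$-norm of $f_m$ on the right) then yields \eqref{pre_ineq_reverse1}.

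For part (ii), I would work with the combined quantity $G := (\nabla^m u, \sqrt{\lm}\,u)$, whose $L^2$-mass on $B$ is exactly the left side of \eqref{pre_ineq_cacci2}. The target is the reverse Hölder inequality
$$\fint_B|G|^2 \leq C\Big(\fint_{2B}|G|^p\Big)^{2/p} + C\sum_{|\al|\leq m}\lm^{(|\al|-m)/m}\fint_{2B}|f_\al|^2.$$
One clean route is to rescale: $\tilde u(x) := u(\lm^{-1/(2m)}x + x_0)$ satisfies $\LL_1^{\tilde A}\tilde u + \tilde u = \tilde{f}$ with rescaled ball radius $\tilde r := \lm^{1/(2m)}r$ and coefficients $\tilde A(y) = A(\lm^{-1/(2m)}y + x_0)$, which preserves the ellipticity and $L^\infty$-bounds. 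Part (i) applied to $\tilde u$ with $-\tilde u$ treated as an additional $f_0$-forcing yields the reverse Hölder estimate for $\nabla^m \tilde u$; unwinding the rescaling converts $\tilde r^m \|\tilde u\|_{L^2}$ back to $\|\sqrt\lm\,u\|_{L^2}$ with the correct weight, and Gehring applied to $|G|$ then delivers \eqref{pre_ineq_reverse2}. The Sobolev embedding $H^m(B) \hookrightarrow L^q(B)$ for $q > 2$ small can be used separately to promote $\sqrt\lm\|u\|_{L^2}$ on the left to $\sqrt\lm\|u\|_{L^q}$.

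The main obstacle, as is usual with mass terms, is the scale-matching in part (ii): the polynomial residual $r^{-2m}\fint|P_{m-1}|^2$ in the non-rescaled approach carries no intrinsic $\lm$-weight, forcing a dichotomy $\lm r^{2m} \gtrless 1$; in the rescaled approach, one must verify that the reverse Hölder constants remain uniform across the range of $\tilde r$ and that the weights $\lm^{(|\al|-m)/m}$ emerge correctly after the change of variables. Careful bookkeeping of the $L^q$-forcing through Gehring's lemma is the technical heart of the argument.
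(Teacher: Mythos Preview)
Your approach to part (i) is correct and matches the paper, which simply cites \cite{Barton2016_Gradient} and the $W^{m,p}$ theory.

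For part (ii), you and the paper agree on the overall plan (rescale to $\lm=1$, derive a weak reverse H\"older inequality for the pair $G=(\nabla^m u,\sqrt{\lm}\,u)$, apply Gehring), but your specific route---reducing to part (i) with $-\tilde u$ treated as $f_0$-forcing---does not work as stated. Part (i) weights $f_0$ by $\tilde r^m$, and your claim that ``unwinding the rescaling converts $\tilde r^m\|\tilde u\|_{L^2}$ back to $\|\sqrt\lm\,u\|_{L^2}$ with the correct weight'' is wrong: since $\tilde r^m=\lm^{1/2}r^m$, you in fact obtain $r^m\sqrt\lm\,\|u\|_{L^2}$ on the right, with an unwanted factor $r^m$ that is large precisely when $\lm r^{2m}>1$. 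The dichotomy you flag is therefore not resolved by the rescaling.

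The paper avoids this as follows. After normalizing to $\lm=1$, it applies Caccioppoli \eqref{pre_ineq_cacci2} not to $u$ but to $u-P$, where $P=P_{m-1}(u;x_1,2\rho)$ on an arbitrary sub-ball $B(x_1,2\rho)\subset 2B$. The equation becomes $\LL_1(u-P)+(u-P)=\sum_{|\al|\le m}D^\al f_\al-P$, so the polynomial $P$ replaces $u$ as the $f_0$-forcing. The key observation is that property (iii) of Lemma~\ref{pre_lem_poincare} bounds $\|P\|_{L^2(B_{2\rho})}$ by $C\rho^{d/2-d/p}\|u\|_{L^p(B_{2\rho})}$, which after averaging gives exactly $(\fint|u|^p)^{1/p}$ with no dangling power of $\rho$. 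Combined with property (iv) to handle $\rho^{-2m}\int|u-P|^2$, one obtains the reverse H\"older inequality
\[
\Big(\fint_{B_\rho}|\nabla^m u|^2\Big)^{1/2}+\Big(\fint_{B_\rho}|u|^2\Big)^{1/2}\le C\Big\{\Big(\fint_{B_{2\rho}}|\nabla^m u|^p\Big)^{1/p}+\Big(\fint_{B_{2\rho}}|u|^p\Big)^{1/p}+\sum_{|\al|\le m}\Big(\fint_{B_{2\rho}}|f_\al|^2\Big)^{1/2}\Big\}
\]
uniformly in $\rho$, and Gehring applied to $|G|$ gives \eqref{pre_ineq_reverse2} directly---no separate Sobolev embedding step is needed.
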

\begin{proof}
Estimate \eqref{pre_ineq_reverse1} follows from \cite[Theorem 24]{Barton2016_Gradient} and $W^{m, p}$ estimate for higher-order elliptic systems (\cite{Dong2011_Higher}). To show \eqref{pre_ineq_reverse2}, by rescaling, we assume $\lambda=1$. Let $x_1\in \mathbb{R}^d$ and $\rho>0$ such that $B(x_1, 2\rho)\subset B(x_0, 2r)$. Let $P(x)=P_{m-1}(u; x_1, 2\rho)$ be given by Lemma \ref{pre_lem_poincare}. Applying \eqref{pre_ineq_cacci2} to $\mathcal{L}_1 (u-P)+(u-P)=\sum_{|\alpha|\leq m}f_\alpha-P  \textrm{ in } B(x_1, 2\rho), $ we obtain
\begin{align}
&\quad\int_{B(x_1, \rho)} |\nabla^m u|^2+\int_{B(x_1, \rho)} |u-P|^2\nonumber\\& \leq \frac{C}{\rho^{2m}} \int_{B(x_1, 2\rho)}|u-P|^2+C\sum_{|\alpha|\leq m} \int_{B(x_1, 2\rho)} |f_\alpha|^2+C\int_{B(x_1, 2\rho)}|P|^2.\label{pre_ineq_reverse_proof1}
\end{align}
By properties (iii), (iv) in Lemma \ref{pre_lem_poincare}, for $p=\frac{2d}{d+2}$, we have
\begin{gather}
\Big(\int_{B(x_1, 2\rho)}|P|^2\Big)^{1/2}\leq C\rho^{\frac{d}{2}-\frac{d}{p}}\|u\|_{L^p(B(x_1, 2\rho))},\label{pre_ineq_reverse_proof3}\\
\Big(\int_{B(x_1, 2\rho)}|u-P|^2\Big)^{1/2}\leq C\rho^{m+\frac{d}{2}-\frac{d}{p}}\|\nabla^m u\|_{L^p(B(x_1, 2\rho))},\label{pre_ineq_reverse_proof2}
\end{gather}
where $C$ depends only on $d$ and $m$.
Thus, it follows from \eqref{pre_ineq_reverse_proof1}, \eqref{pre_ineq_reverse_proof3} and \eqref{pre_ineq_reverse_proof2}  that
\begin{align}
&\quad\Big(\fint_{B(x_1, \rho)} |\nabla^m u|^2\Big)^{1/2}+\Big(\fint_{B(x_1, \rho)} |u|^2\Big)^{1/2}\nonumber\\
&\leq C\bigg\{\Big(\fint_{B(x_1, 2\rho)}|\nabla^mu|^p\Big)^{1/p}+\Big(\fint_{B(x_1, 2\rho)}|u|^p\Big)^{1/p}+\sum_{|\alpha|\leq m} \Big(\fint_{B(x_1, 2\rho)} |f_\alpha|^2\Big)^{1/2}\bigg\}.\label{pre_ineq_reverse_proof4}
\end{align}
By the standard self-improving argument, we obtain \eqref{pre_ineq_reverse2} from \eqref{pre_ineq_reverse_proof4} immediately.
\end{proof}


\section{Approximate correctors}\label{sec_appcor}

Now we introduce the approximate correctors $\chi_T$ and establish some elementary estimates.
\begin{prop}\label{appcor_prop_1}
Suppose that $A$ satisfies \eqref{intro_cond_bdd}--\eqref{intro_cond_el} and $f_\alpha\in L^2_{\rm{loc, unif}}(\mathbb{R}^d; \mathbb{R}^n)$ for $|\alpha|\leq m$. Then, for any $T>0$, there exists a unique function $u\in H^m_{\rm{loc}}(\mathbb{R}^d; \mathbb{R}^n)$ such that $\nabla^k u\in L^2_{\rm{loc, unif}}(\mathbb{R}^d)$ for $0\leq k\leq m$, and
\begin{equation}\label{appcor_eq1}(-1)^{m}\sum_{|\alpha|=|\beta|=m} D^\alpha(A^{\alpha\beta}D^\beta u)+T^{-2m}u=\sum_{|\alpha|\leq m} D^\alpha f_\alpha \quad\text{in}~\mathbb{R}^d. \end{equation}
Moreover, $u$ satisfies
\begin{equation}\label{appcor_es_u}\sum_{k\leq m}T^{-m+k}\|\nabla^k u\|_{S_T^2}\leq C\sum_{|\alpha|\leq m}T^{m-|\alpha|}\|f_\alpha\|_{S_T^2},\end{equation} where $C$ depends only on $d, m, n$ and $\mu$.
\end{prop}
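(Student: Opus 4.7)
The plan is to reduce the problem to $T=1$ by a scaling argument, establish existence by truncation plus Lax--Milgram in $H^m(\R^d)$, and prove the locally uniform bound via a weighted energy estimate against an exponentially decaying weight.

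First I would substitute $v(y):=u(Ty)$ with $\wt{A}(y):=A(Ty)$ and $\wt{f}_\al(y):=T^{2m-|\al|}f_\al(Ty)$. A direct change of variables shows that \eqref{appcor_eq1} becomes
\[
\LL_1^{\wt{A}}v+v=\sum_{|\al|\le m}D^\al \wt{f}_\al\quad\text{in}~\R^d,
\]
where $\wt{A}$ still satisfies \eqref{intro_cond_bdd}--\eqref{intro_cond_el}. The identities $\|\nabla^k v\|_{S^2_1}=T^k\|\nabla^k u\|_{S^2_T}$ and $\|\wt{f}_\al\|_{S^2_1}=T^{2m-|\al|}\|f_\al\|_{S^2_T}$ reduce \eqref{appcor_es_u} to
\[
\sum_{k\le m}\|\nabla^k v\|_{S^2_1}\le C\sum_{|\al|\le m}\|\wt{f}_\al\|_{S^2_1},
\]
so it suffices to treat $T=1$. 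For existence I would truncate $\wt{f}_\al^R:=\wt{f}_\al\chi_{B(0,R)}\in L^2(\R^d)$; because $\LL_1^{\wt{A}}+I$ is bounded and coercive on $H^m(\R^d)$ thanks to \eqref{intro_cond_el} and the positive lower-order term, Lax--Milgram furnishes a unique $v^R\in H^m(\R^d)$ solving the truncated equation.

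The core is the $R$-uniform bound on $v^R$ depending only on $\|\wt{f}_\al\|_{S^2_1}$. For a fixed $x_0\in\R^d$ I would use the smoothed exponential weight $\eta(x):=\exp\bigl(-\de\sqrt{1+|x-x_0|^2}\bigr)$ with $\de>0$ small, which satisfies $|D^k\eta^{2m}|\le C_k\de\,\eta^{2m}$ for every $k\ge 1$. Since $\eta^{2m}v^R\in H^m(\R^d)$, testing the weak formulation against $\eta^{2m}v^R$, expanding $D^\al(\eta^{2m}v^R)$ by the Leibniz rule, and using the coercivity \eqref{intro_cond_el} applied to $\eta^m v^R$ to control the principal part $\int\eta^{2m}\wt{A}^{\al\be}D^\be v^R D^\al v^R$ from below, every cross term carries at least one factor of $\de$. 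Estimating the right-hand side $\int\sum D^\al \wt{f}_\al\cdot\eta^{2m}v^R$ via integration by parts, Young's and interpolation inequalities, I would choose $\de$ small enough (depending only on $d,m,n,\mu$) to absorb all remainders and obtain
\[
\int_{\R^d}\eta^{2m}(|\nabla^m v^R|^2+|v^R|^2)\,dx\le C\sum_{|\al|\le m}\int_{\R^d}\eta^{2m}|\wt{f}_\al|^2\,dx.
\]
Covering $\R^d$ by unit balls and summing the resulting geometric series in the exponential decay of $\eta$ gives $\int_{\R^d}\eta^{2m}|\wt{f}_\al|^2\le C\|\wt{f}_\al\|_{S^2_1}^2$; since $\eta\ge c>0$ on $B(x_0,1)$, taking the supremum in $x_0$ yields the desired bounds on $\|v^R\|_{S^2_1}$ and $\|\nabla^m v^R\|_{S^2_1}$ uniformly in $R$. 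Intermediate orders $\|\nabla^k v^R\|_{S^2_1}$, $1\le k\le m-1$, then follow from Lemma \ref{pre_lem_cacci} applied on unit balls.

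Finally I would pass $R\to\infty$ via weak compactness in $L^2$ on each ball and a diagonal argument to obtain a limit $v$ with $\nabla^k v\in L^2_{\rm loc,unif}$ solving the original equation and inheriting the estimate. Uniqueness follows by applying the same weighted estimate to the difference of two solutions, which is legitimate because $\eta^{2m}w\in H^m(\R^d)$ whenever $\nabla^k w\in L^2_{\rm loc,unif}$. Undoing the scaling produces \eqref{appcor_es_u} for general $T$. The main obstacle is the uniform bound of Step~3: the Leibniz expansion of $D^\al(\eta^{2m}v^R)$ for $|\al|=m$ generates many cross terms mixing derivatives of $\eta$ and of $v^R$ that must be simultaneously absorbed, which is why the smoothed exponential (rather than $e^{-\de|x-x_0|}$, non-smooth at $x_0$) and the power $2m$ on $\eta$ are tailored so that each cross term loses at least one factor of $\de$ and pairs cleanly with $\eta^{2m}(|\nabla^m v^R|^2+|v^R|^2)$.
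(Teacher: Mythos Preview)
Your proposal is correct and follows essentially the same approach as the paper: rescale to $T=1$, solve for compactly supported data via Lax--Milgram, establish a weighted energy estimate with an exponential weight whose derivatives are controlled by a small parameter, and pass to the limit. The only cosmetic difference is that the paper uses the exponentially \emph{growing} weight $\vp_\lm(x)=\vp(\lm x)$ with $\vp(x)=\phi(x)+(1-\phi(x))e^{|x|}$ (yielding exponential decay of the compactly-supported-data solution, from which the $S^2_1$-bound is then extracted as in \cite{Pozhidaev1989,Shen2015_Convergence}), whereas you use the decaying weight $\eta^{2m}$ centered at an arbitrary $x_0$, which gives the $S^2_1$-bound more directly after summing over a unit-ball covering; both variants rest on the same Leibniz/absorption mechanism and the same interpolation $\sum_{k<m}\int|\nabla^k u|^2\,w\le C\int(|\nabla^m u|^2+|u|^2)\,w$.
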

\begin{proof}
By rescaling we may assume that $T=1$. Let $\varphi(x)=\phi(x)+(1-\phi(x))e^{|x|}$ and $\varphi_\lambda(x)=\varphi(\lambda x)$, where $\phi\in C_c^\infty(B(0, 2))$ with  $\phi\equiv1$ in $B(0, 1)$. Observe that for $|\alpha|\leq m$, $|D^\alpha \varphi(x)|\leq C\varphi(x)$ with $C$ depending only on $m$, which implies that $|D^\alpha \varphi_\lambda(x)|\leq C\lambda^{|\alpha|}\varphi_\lambda(x)$ for $|\alpha|\leq m$.
This, together with the inequality $$\sum_{k<m}\int_{\mathbb{R}^d}|\nabla^k u|^2\varphi_\lambda\leq C\int_{\mathbb{R}^d}(|\nabla^m u|^2\varphi_\lambda+|u|^2\varphi_\lambda),$$ gives that, for $f_\alpha\in L^2(\mathbb{R}^d)$ with compact support, there exists a constant $\lambda>0$, depending only on $d, m, n$ and $\mu$, such that the solution of \eqref{appcor_eq1} satisfies
\begin{equation}\label{appcor_es_phi}
\sum_{k\leq m}\int_{\mathbb{R}^d}|\nabla^k u|^2\varphi_\lambda\leq C\int_{\mathbb{R}^d}\sum_{|\alpha|\leq m}|f_\alpha|^2\varphi_\lambda.
\end{equation}
With \eqref{appcor_es_phi} in hand, the proof may be completed in the same way as that for second-order systems in  \cite[Section 4]{Pozhidaev1989} or \cite{Shen2015_Convergence}. We therefore omit the details here.
\end{proof}

\begin{lemma}\label{appcor_lem_SR}
Suppose that $A$ satisfies \eqref{intro_cond_bdd}--\eqref{intro_cond_el}. Let $u\in H^m_{\rm{loc}}(\mathbb{R}^d; \mathbb{R}^n)$ be the weak solution to \eqref{appcor_eq1} in $\mathbb{R}^d$ given by Proposition \ref{appcor_prop_1}. Then for any $R\geq T$, the following  estimates hold
\begin{gather}
\sum_{k\leq m}T^{-m+k}\|\nabla^k u\|_{S_R^2}\leq C\sum_{|\alpha|\leq m}T^{m-|\alpha|}\|f_\alpha\|_{S_R^2},\label{appcor_ineq_SR}\\
\|\nabla^m u\|_{S_R^q}\leq C\sum_{|\alpha|\leq m}T^{m-|\alpha|}\|f_\alpha\|_{S_R^q},\label{appcor_es_u_reverse}\end{gather}
for $2\leq q\leq q^+$, where $q^+>2$ is given in Lemma \ref{pre_lem_reverse} and $C$ depends only on $d, m, n$ and $\mu$.
\end{lemma}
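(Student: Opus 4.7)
The plan is to establish \eqref{appcor_ineq_SR} via a weighted energy estimate centered at an arbitrary point $x_0\in\R^d$, and then to derive \eqref{appcor_es_u_reverse} by combining \eqref{appcor_ineq_SR} with the Meyers-type reverse H\"{o}lder inequality in Lemma \ref{pre_lem_reverse}(ii). The underlying heuristic is that the zero-order term $T^{-2m}u$ makes the resolvent of $\LL_1+T^{-2m}$ decay on the scale $T$; once $R\geq T$, one can therefore use a weight whose derivatives are no larger than $T^{-|\ga|}$ times itself, which is exactly the scale built into the equation and is what permits absorption of all error terms.

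For \eqref{appcor_ineq_SR}, I would fix $x_0$ and choose a smooth positive weight $\vp=\vp_{x_0,R}$ with $\vp\equiv 1$ on $B(x_0,R)$, $\vp(x)\leq Ce^{-c|x-x_0|/R}$ for $|x-x_0|\geq 2R$, and $|D^\ga\vp|\leq CR^{-|\ga|}\vp\leq CT^{-|\ga|}\vp$ for $|\ga|\leq m$, with $c>0$ a sufficiently small universal constant. Testing \eqref{appcor_eq1} against $u\vp$, applying \eqref{intro_cond_el} to $u\sqrt\vp$ in a weighted coercivity form, and absorbing all intermediate-order cross terms via Young's inequality combined with weighted interpolation in the spirit of the proof of Proposition \ref{appcor_prop_1}, I expect to arrive at
\begin{align*}
\sum_{k\leq m}T^{2(k-m)}\int_{\R^d}|\nabla^k u|^2\vp\,dx\leq C\sum_{|\al|\leq m}T^{2(m-|\al|)}\int_{\R^d}|f_\al|^2\vp\,dx.
\end{align*}
Each absorption step is feasible precisely because $|D^\ga\vp|/\vp\leq CT^{-|\ga|}$ matches the natural scaling of the left-hand side.

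To conclude \eqref{appcor_ineq_SR}, note that $\vp\equiv 1$ on $B(x_0,R)$ yields the lower bound $\int|\nabla^k u|^2\vp\geq\int_{B(x_0,R)}|\nabla^k u|^2$. For the right-hand side, I would decompose $\R^d$ into $B(x_0,R)$ and the dyadic annuli $A_j=B(x_0,2^{j+1}R)\setminus B(x_0,2^jR)$, use $\vp\leq Ce^{-c2^j}$ on $A_j$, and apply \eqref{hom_inequ_1} to bound $\|f_\al\|_{S^2_{2^{j+1}R}}\leq C\|f_\al\|_{S^2_R}$; summing the convergent series $\sum_{j\geq 0}2^{jd}e^{-c2^j}$ yields $\int|f_\al|^2\vp\,dx\leq CR^d\|f_\al\|^2_{S^2_R}$. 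Dividing by $|B(x_0,R)|\sim R^d$ and taking the supremum over $x_0$ produces \eqref{appcor_ineq_SR}.

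For \eqref{appcor_es_u_reverse}, I would apply Lemma \ref{pre_lem_reverse}(ii) on the ball $B(y,R)$ with $\lm=T^{-2m}$ (so $\sqrt\lm=T^{-m}$ and $\lm^{(|\al|-m)/(2m)}=T^{m-|\al|}$) and take the supremum over $y\in\R^d$ to obtain
\begin{align*}
\|\nabla^m u\|_{S^q_R}\leq C\Bigl\{\|\nabla^m u\|_{S^2_{2R}}+T^{-m}\|u\|_{S^2_{2R}}+\sum_{|\al|\leq m}T^{m-|\al|}\|f_\al\|_{S^q_{2R}}\Bigr\}.
\end{align*}
The $S^2_{2R}$-norms on the right can be controlled by \eqref{appcor_ineq_SR} at scale $2R\geq T$; passing from $S^2_{2R}$ to $S^q_{2R}$ by H\"{o}lder's inequality and then applying \eqref{hom_inequ_1} to replace $\|f_\al\|_{S^q_{2R}}$ by $C\|f_\al\|_{S^q_R}$ will complete the argument. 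The principal difficulty will be the weighted energy estimate of step one: the higher-order structure produces many intermediate-derivative cross terms, and the simultaneous absorption of these (and of the $u$-norm on the right of a Caccioppoli-style bound) must be carried out with some care. The hypothesis $R\geq T$ is exactly what keeps the weight gradient no larger than the intrinsic scale of the equation, which is the mechanism that closes the estimate.
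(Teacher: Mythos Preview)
Your proposal is correct. For \eqref{appcor_es_u_reverse} you proceed exactly as the paper does: apply the reverse H\"{o}lder inequality \eqref{pre_ineq_reverse2} with $\lm=T^{-2m}$ on balls of radius $R$, take the supremum over centers, and close with \eqref{appcor_ineq_SR} together with \eqref{hom_inequ_1}.

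For \eqref{appcor_ineq_SR} your route differs from the paper's. The paper derives \eqref{appcor_ineq_SR} in one line from Caccioppoli's inequality \eqref{pre_ineq_cacci2} and the doubling relation \eqref{hom_inequ_1}, i.e.\ it leverages already-established local inequalities rather than repeating a weighted energy computation. Your argument instead re-runs the weighted energy estimate that underlies Proposition~\ref{appcor_prop_1}, but with an exponentially decaying weight centered at $x_0$ and rescaled to radius $R$; the hypothesis $R\geq T$ is precisely what guarantees $|D^\ga\vp|\leq CR^{-|\ga|}\vp\leq CT^{-|\ga|}\vp$, matching the intrinsic scale $T$ of the operator and permitting absorption of all cross terms. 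Both approaches are valid: the paper's is shorter and more modular, while yours is self-contained and makes transparent why $R\geq T$ is the right threshold. Your approach also has the advantage of not needing to separately invoke \eqref{appcor_es_u} to control the $\|u\|_{S^2_{2R}}$ term that the Caccioppoli route produces on its right-hand side.
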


\begin{proof}
Estimate \eqref{appcor_ineq_SR} follows from Caccioppoli's inequality \eqref{pre_ineq_cacci2} and \eqref{hom_inequ_1}, while the estimate  \eqref{appcor_es_u_reverse} follows from the reverse H\"{o}lder inequality \eqref{pre_ineq_reverse2} and \eqref{appcor_ineq_SR}.
\end{proof}

For $T>0$, let $\chi_{T, l}^\gamma=(\chi_{T, jl}^\gamma)$ be the weak solution to
\begin{equation}\label{appcor_eq2}
(-1)^{m}\sum_{|\alpha|=|\beta|=m} D^\alpha(A^{\alpha\beta}D^\beta u)+T^{-2m}u=(-1)^{m+1}\sum_{|\alpha|=|\beta|=m} D^\alpha(A^{\alpha\beta}D^\beta P^\gamma_l)\quad\text{ in}~\mathbb{R}^d,
\end{equation}
given in Proposition \ref{appcor_prop_1}, where $P_l^\gamma=\frac{1}{\gamma!}x^\gamma e_l$, $|\gamma|=m$, $1\leq l\leq n$, and $e_l=(0, \dots, 1, \dots, 0)$ with $1$ in the $l$-th position. The matrix-valued functions $\chi_T=(\chi_{T, l}^\gamma)$ are called the approximate correctors. It follows from \eqref{appcor_ineq_SR} and \eqref{appcor_es_u_reverse} that, for $R\geq T$,
\begin{align}
\sum_{k\leq m}T^{-m+k}\|\nabla^k \chi_T\|_{S_R^2}+\|\nabla^m \chi_T\|_{S_R^{q^+}}\leq C,\label{appcor_es_chi_SR}
\end{align}
where $q^+>2$ and $C$ depends only on $d, m, n$ and $\mu$.

\begin{theorem}\label{appcor_thm_APW}
Suppose $A$ satisfies \eqref{intro_cond_bdd}--\eqref{intro_cond_el}. Then there exists some $2<p<\infty$, depending only on $d, m, n$ and $\mu$, such that  for any $y, z\in\mathbb{R}^d$ and $R\geq T$,
\begin{equation}\label{appcor_es_delta_chi}\sum_{k\leq m}T^{-m+k}\|\Delta_{yz}(\nabla^k \chi_T)\|_{S_R^2}\leq C\|\Delta_{yz}(A)\|_{S_R^p},\end{equation}
where $C$ depends only on $d, m, n$ and $\mu$. Furthermore, if $A\in APW^2$, then $\nabla^k \chi_T\in APW^2$ for each $0\leq k\leq m$.
\end{theorem}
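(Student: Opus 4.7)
The plan is to write the equation satisfied by the difference $w=\chi_T^y-\chi_T^z$, where $\chi_T^y(x):=\chi_T(x+y)$, and then apply the elementary $S^2_R$-estimate from Lemma \ref{appcor_lem_SR} together with the Meyers-type reverse H\"{o}lder bound \eqref{appcor_es_chi_reverse}. The function $\chi_T^y$ is the approximate corrector associated with the translated coefficients $A_y(x):=A(x+y)$, which satisfies \eqref{intro_cond_bdd}--\eqref{intro_cond_el} with the same constants; in particular, $\LL_1^{A_y}\chi_T^y+T^{-2m}\chi_T^y=-\LL_1^{A_y}(P_l^\ga)$, and likewise for $z$. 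Subtracting and using $\LL_1^{A_z}-\LL_1^{A_y}=(-1)^{m+1}\sum_{|\al|=|\be|=m}D^\al[\Delta_{yz}A^{\al\be}D^\be\,\cdot\,]$ yields
$$\LL_1^{A_y}w+T^{-2m}w=(-1)^{m+1}\sum_{|\al|=|\be|=m}D^\al\!\bigl[\Delta_{yz}A^{\al\be}(x)\bigl(D^\be\chi_T^z+D^\be P_l^\ga\bigr)\bigr]\quad\text{in }\R^d.$$
Applying \eqref{appcor_ineq_SR} with $f_\al\equiv 0$ for $|\al|<m$ and $f_\al=\Delta_{yz}A^{\al\be}(D^\be\chi_T^z+D^\be P_l^\ga)$ for $|\al|=m$ then gives
$$\sum_{k\leq m}T^{-m+k}\|\nabla^k w\|_{S_R^2}\le C\sum_{|\al|=|\be|=m}\|\Delta_{yz}A^{\al\be}(D^\be\chi_T^z+D^\be P_l^\ga)\|_{S_R^2}.$$

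Next I would use H\"{o}lder's inequality to split the product on the right. Choosing $p$ so that $1/p+1/q^+=1/2$, i.e.\ $p=2q^+/(q^+-2)>2$, one has on every ball $B(x,R)$
$$\Bigl(\fint_{B(x,R)}|\Delta_{yz}A|^2|\nabla^m\chi_T^z+\nabla^m P_l^\ga|^2\Bigr)^{1/2}\le\Bigl(\fint_{B(x,R)}|\Delta_{yz}A|^p\Bigr)^{1/p}\Bigl(\fint_{B(x,R)}|\nabla^m\chi_T^z+\nabla^m P_l^\ga|^{q^+}\Bigr)^{1/q^+}.$$
Taking the supremum over $x$, using the translation-invariance of $\|\cdot\|_{S_R^q}$, the bound $\|\nabla^m\chi_T\|_{S_R^{q^+}}\le C$ from \eqref{appcor_es_chi_reverse} (valid for $R\ge T$), and $|\nabla^m P_l^\ga|\le 1$, I conclude
$$\sum_{k\leq m}T^{-m+k}\|\nabla^k w\|_{S_R^2}\le C\|\Delta_{yz}A\|_{S_R^p}.$$
Since $\nabla^k w(x)=\Delta_{yz}(\nabla^k\chi_T)(x)$, this is exactly \eqref{appcor_es_delta_chi}.

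For the final $APW^2$ assertion, the bound \eqref{intro_cond_bdd} combined with $A\in APW^2(\R^d)$ promotes $A$ to $APW^p(\R^d)$ for every finite $p\ge 2$ via the interpolation $\|\Delta_{yz}A\|_{S^p_R}\le(2\|A\|_{L^\infty})^{1-2/p}\|\Delta_{yz}A\|_{S^2_R}^{2/p}$ and the characterization \eqref{hom_char}. Fixing $T$ and applying the estimate just proven, for $R\ge T$,
$$\sup_{y\in\R^d}\inf_{|z|\le L}\|\Delta_{yz}(\nabla^k\chi_T)\|_{S^2_R}\le CT^{m-k}\sup_{y\in\R^d}\inf_{|z|\le L}\|\Delta_{yz}A\|_{S^p_R}\longrightarrow 0\quad\text{as }L,R\to\infty,$$
which, together with $\nabla^k\chi_T\in L^2_{\textrm{loc, unif}}(\R^d)$ from \eqref{appcor_es_chi_SR}, shows $\nabla^k\chi_T\in APW^2(\R^d)$ by \eqref{hom_char}. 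The main obstacle is a bookkeeping point rather than a conceptual one: one must match the H\"{o}lder exponent to the Meyers self-improving exponent $q^+$ from Lemma \ref{pre_lem_reverse}, and leverage the $L^\infty$-bound on $A$ to upgrade its $APW^2$-membership to $APW^p$-membership so that the right-hand side $\|\Delta_{yz}A\|_{S^p_R}$ can be driven to zero as $L,R\to\infty$.
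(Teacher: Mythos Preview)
Your proof is correct and follows essentially the same approach as the paper: you derive the same equation for the translated difference $w=\chi_T^y-\chi_T^z$, apply \eqref{appcor_ineq_SR}, and split the right-hand side via H\"{o}lder with $1/p+1/q^+=1/2$ against the Meyers bound \eqref{appcor_es_chi_reverse}. Your treatment of the $APW^2$ conclusion is in fact slightly more explicit than the paper's, which simply cites \eqref{hom_char}, \eqref{appcor_es_delta_chi}, and refers to \cite{Shen2017_Approximate} for the interpolation step you spelled out.
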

\begin{proof}
Fix $1\leq l\leq n, |\gamma|=m$ and $y, z\in \mathbb{R}^d$. Let $u(x)=\chi^\gamma_{T, l}(x+y)-\chi_{T, l}^\gamma(x+z)$ and $v(x)=\chi_{T, l}^\gamma(x+z)$. Then
\begin{align*}
&\quad\ (-1)^{m}\sum_{|\alpha|=|\beta|=m} D^\alpha(A^{\alpha\beta}(x+y)D^\beta u)+T^{-2m}u\\
&=(-1)^{m+1}\sum_{|\alpha|=|\beta|=m} D^\alpha[(A^{\alpha\beta}(x+y)-A^{\alpha\beta}(x+z))D^\beta (v+P^\gamma_l)],
\end{align*}
  which, together with \eqref{appcor_ineq_SR} and \eqref{appcor_es_chi_SR}, gives \eqref{appcor_es_delta_chi} with $\frac{1}{p}+\frac{1}{q^+}=\frac{1}{2}$.
Furthermore, if $A\in APW^2$,  \eqref{hom_char} and \eqref{appcor_es_delta_chi} imply that $\nabla^k \chi_T \in  APW^2$ for each $0\leq  k \leq  m$. See  \cite[Lemma 3.2]{Shen2018_Approximate} for more details.
\end{proof}

It follows from  \eqref{appcor_eq2} and Theorem \ref{appcor_thm_APW} that if $A\in APW^2$ and $u=\chi_{T, l}^\gamma$, $1\leq l\leq n, |\gamma|=m$, then
$$\Big\langle\sum_{|\alpha|=|\beta|=m}A^{\alpha\beta}_{ij}D^\beta u_j D^\alpha v_i\Big\rangle+T^{-2m}\langle uv\rangle=-\Big\langle\sum_{|\alpha|=|\beta|=m}A^{\alpha\gamma}_{il}D^\alpha v_i\Big\rangle,$$
for any $v=(v_i)\in H^m_{\rm{loc}}(\mathbb{R}^d; \mathbb{R}^n)$ such that $\nabla^k v_i\in B^2$ for every $0\leq k\leq m$. This implies that
\begin{align}D^\alpha \chi_{T, ij}^\beta\rightarrow \psi_{ij}^{\alpha\beta} \quad\textrm{strongly in } B^2,~\mathrm{and}~T^{-2m}\langle |\chi_T|^2\rangle\rightarrow 0,\label{appcor_conver}\end{align}
as $T\rightarrow \infty$, where $\psi=(\psi_{ij}^{\alpha\beta})$ is defined by \eqref{hom_eq_corrector}. Moreover, by letting $v$ be a vector of constants, we get $\langle \chi_T\rangle=0$ for any $T>0$.


\section{H\"{o}lder estimates at large scale}\label{sec_holder}

In this section we establish the large-scale H\"{o}lder estimate for the approximate correctors $\chi_T$. As a byproduct, a Liouville theorem for  higher-order elliptic systems with $B^2$-coefficients is obtained. Throughout this section, unless indicated, we always assume that $A\in B^2$ satisfies \eqref{intro_cond_bdd}--\eqref{intro_cond_el} and condition \eqref{hom_condition_Deltayz}. Note that if  $A\in APW^2$, $A$ satisfies condition \eqref{hom_condition_Deltayz}.
\begin{theorem}\label{holder_thm_holder}
Fix $\sigma\in(0, 1)$ and $x_0\in\mathbb{R}^d$. Let $u_\varepsilon\in H^m(B(x_0, R); \mathbb{R}^n)$ be a weak solution to
\begin{equation*}\label{holder_eq_L1}
\mathcal{L}_\varepsilon u_\varepsilon+\lambda u_\varepsilon =\sum_{|\alpha|\leq m}D^\alpha f_\alpha \quad\textrm{ in } B(x_0, R),
\end{equation*} for $0<\varepsilon<R$ and $\lambda\in[0, R^{-2m}]$. Then  if $\varepsilon\leq r\leq R/2$,
\begin{align}
&\quad\Big(\fint_{B(x_0, r)}|\nabla^{m} u_\varepsilon|^2\Big)^{1/2}+\sqrt{\lambda}\Big(\fint_{B(x_0, r)}|u_\varepsilon|^2\Big)^{1/2}\nonumber\\
&\leq C_\sigma\Big(\frac{R}{r}\Big)^\sigma\bigg\{R^{-m}\Big(\fint_{B(x_0, R)}|u_\varepsilon|^2\Big)^{1/2}+\sum_{|\alpha|\leq m}\sup_{r\leq t\leq R}t^{m-|\alpha|}\Big(\fint_{B(x_0, t)}|f_\alpha|^2\Big)^{1/2}\bigg\},\label{holder_es_large}
\end{align}
where $C_\sigma$ depends only on $d, m ,n, \sigma$ and $A$.
\end{theorem}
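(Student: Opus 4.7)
The plan is to apply the Avellaneda--Lin compactness scheme \cite{Avellaneda1987_Compactness}, adapted to the higher-order almost-periodic setting. First, via the change of variables $y = (x-x_0)/R$, I reduce to the case $x_0 = 0$ and $R = 1$: the operator becomes $\LL_{\varepsilon/R}^{\wt A}$ with $\wt A$ a translate of $A$ (which preserves all standing hypotheses, including \eqref{hom_condition_Deltayz}), $\lambda$ is replaced by $R^{2m}\lambda \in [0,1]$, and the source data rescale consistently. Writing
\[
\Phi(r) := \Big(\fint_{B_r}|\nabla^m u_\varepsilon|^2\Big)^{1/2} + \sqrt{\lambda}\,\Big(\fint_{B_r}|u_\varepsilon|^2\Big)^{1/2},
\]
the goal becomes to prove $\Phi(r) \le C_\sg r^{-\sg}\{\|u_\varepsilon\|_{L^2_{\mathrm{avg}}(B_1)} + \sum_{|\al|\le m}\sup_{r\le t\le 1}t^{m-|\al|}(\fint_{B_t}|f_\al|^2)^{1/2}\}$ for $\varepsilon \le r \le 1/2$.

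The crux is a one-step decay estimate: there is a constant $C_0$, depending only on $d,m,n,\mu,A$, such that for each $\theta\in(0,1/4)$ one can find $\varepsilon_0(\theta)>0$ so that $\varepsilon\le\theta\varepsilon_0$ forces
\[
\Phi(\theta)\le C_0\,\Phi(1/2) + C\sum_{|\al|\le m}\Big(\fint_{B_1}|f_\al|^2\Big)^{1/2}.
\]
I would prove this by contradiction and compactness. Failure yields sequences $\varepsilon_l\to 0$, translations $y_l\in\R^d$, $\lm_l\to\lm_0\in[0,1]$, and solutions $u_l$ of $\LL_{\varepsilon_l}^{A(\cdot+y_l)}u_l+\lm_l u_l=\sum D^\al f_{l,\al}$ in $B_1$ with $\Phi_{u_l}(1/2)+\sum\|f_{l,\al}\|_{L^2(B_1)}=1$ but $\Phi_{u_l}(\theta)>C_0$. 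Setting $\wt u_l := u_l-P_{m-1}(u_l;0,1/2)$ (the Poincar\'e--Sobolev projection of Lemma \ref{pre_lem_poincare}, whose subtraction leaves $\nabla^m u_l$ unchanged) yields $\|\wt u_l\|_{L^2(B_{1/2})}\le C\|\nabla^m u_l\|_{L^2(B_{1/2})}\le C$, so $\{\wt u_l\}$ is bounded in $H^m(B_{1/2})$. Extracting a subsequence gives weak $H^m$- and strong $H^{m-1}$-convergence to some $\wt u_0$. By the standing hypothesis \eqref{hom_condition_Deltayz}, Theorem \ref{hom_thm_compactness2} applies, and $\wt u_0$ solves a constant-coefficient system $\LL_0\wt u_0+\lm_0\wt u_0=F_0$ in $B_{1/2}$, where $F_0$ inherits the source bounds. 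Classical interior regularity for $\LL_0+\lm_0$ then gives $\|\nabla^m\wt u_0\|_{L^\infty(B_{1/4})}+\sqrt{\lm_0}\|\wt u_0\|_{L^\infty(B_{1/4})}\le C$ with $C$ universal, bounding $\Phi_{\wt u_0}(\theta)$ independently of $\theta$ and producing the contradiction once $C_0$ is taken sufficiently large.

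With the one-step decay in hand, the iteration is routine. For the prescribed $\sg\in(0,1)$, choose $\theta$ so small that $C_0\le\theta^{-\sg}$ and apply the improvement at the dyadic scales $\theta^k$, $0\le k\le K$, where $\theta^{K+1}<\varepsilon\le\theta^K$; at each step one rescales to $B_1$, replacing $\varepsilon,\lm$ and the source by their dimensionally-correct counterparts. Summing the resulting geometric sequence and using the monotonicity of $r\mapsto\sup_{t\ge r}t^{m-|\al|}(\fint_{B_t}|f_\al|^2)^{1/2}$ gives the estimate at dyadic scales; intermediate $r\in[\theta^{k+1},\theta^k]$ follow from one more application of Caccioppoli \eqref{pre_ineq_cacci2}, and the transition from $\Phi(1/2)$ to $\|u_\varepsilon\|_{L^2_{\mathrm{avg}}(B_1)}$ on the right-hand side again uses Caccioppoli.

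The main obstacle is the limit passage in the one-step decay: since $\nabla^m u_l$ converges only weakly in $L^2$, $\Phi_{u_l}(\theta)$ is not directly upper semicontinuous. To overcome this, I plan to invoke the reverse Caccioppoli estimate \eqref{pre_ineq_cacci_remark} on the ball $B_{2\theta}\subset B_{1/2}$: it dominates the $\nabla^m$-part of $\Phi_{u_l}(\theta)$ by the $L^2$-norm of $\wt u_l - Q_l$ for a polynomial $Q_l\in\mc{P}_{m-1}$ (which converges strongly along the subsequence), while the $\sqrt{\lm_l}$-part is trivially bounded by $\theta^{-d/2}\Phi_{u_l}(1/2)$. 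The same subtraction device must also control the polynomial source $-\lm_l P_{m-1}(u_l;0,1/2)$ introduced in the equation for $\wt u_l$ — a minor but essential bookkeeping step in the compactness argument.
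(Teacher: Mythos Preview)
Your compactness scheme is essentially correct and mirrors the paper's, but you iterate a different quantity. The paper (Lemmas \ref{holder_lem_onestep} and \ref{holder_lem_iteration}) iterates the Campanato-type oscillation
\[
\Big(\fint_{B(0,\theta^k)}\big|u_\varepsilon - P^{\theta^k}(u_\varepsilon)\big|^2\Big)^{1/2} + \theta^{km}\sqrt{\lambda}\,\Big(\fint_{B(0,\theta^k)}|u_\varepsilon|^2\Big)^{1/2},
\]
and only at the very end converts to $\nabla^m u_\varepsilon$ via Caccioppoli \eqref{pre_ineq_cacci_remark}. The payoff is that the contradiction limit is painless: $u_l\to u$ strongly in $L^2(B_{1/2})$ and, by property (iii) of Lemma \ref{pre_lem_poincare}, $P^{\theta}(u_l)\to P^{\theta}(u)$, so both sides of the one-step inequality are continuous under the available convergence---no weak-limit issue arises. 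Your choice to iterate $\Phi(r)$ directly forces a Caccioppoli step \emph{inside} the compactness argument to overcome the mere weak convergence of $\nabla^m u_l$; as you outline, this works (the $\theta^{-m}$ from Caccioppoli is cancelled by the $\theta^m$ from Poincar\'e applied to the strongly convergent $\wt u_0$), but it adds bookkeeping without buying anything over the paper's route.

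One point must be tightened. Your remark that the $\sqrt{\lambda_l}$-part of $\Phi_{u_l}(\theta)$ is ``trivially bounded by $\theta^{-d/2}\Phi_{u_l}(1/2)$'' is too crude: a $\theta$-dependent bound here would make $C_0=C_0(\theta)$, and then choosing $\theta$ small with $C_0\le\theta^{-\sigma}$ becomes circular. You need the $\theta$-independent estimate obtained by splitting $u_l=\wt u_l+P_{m-1}(u_l;0,1/2)$: by property (iii) of Lemma \ref{pre_lem_poincare} the polynomial part obeys
\[
\sqrt{\lambda_l}\,\Big(\fint_{B_\theta}|P_{m-1}(u_l;0,1/2)|^2\Big)^{1/2}\le C\sqrt{\lambda_l}\,\Big(\fint_{B_{1/2}}|u_l|^2\Big)^{1/2}\le C\,\Phi_{u_l}(1/2)
\]
uniformly in $\theta\in(0,1/4)$, while the $\wt u_l$-part converges strongly and is controlled by $\sqrt{\lambda_0}\|\wt u_0\|_{L^\infty(B_{1/4})}$. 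With this correction your argument goes through.
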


We remark that estimate \eqref{holder_es_large} may fail for $0<r<\varepsilon$, since no smoothness condition on the coefficients is required. Also, one cannot expect further estimates, like Lipschitz estimates, on $u_\varepsilon$, since no additional condition on $\{f_\alpha\}$ is imposed.

In the following, we denote $P^r(u):=P_{m-1}(u; 0, r)$, where $P_{m-1}(\cdot; 0, r)$ is given by Lemma \ref{pre_lem_poincare}. To ensure our estimates are translation invariant when applying the compactness argument, we introduce the set of translations of $A$ by
\begin{align*}
\mathcal{A}=\{A_y: A_y(x)=A(x+y) \textrm{ for some } y\in \mathbb{R}^d\}.
\end{align*}

\begin{lemma}\label{holder_lem_onestep}
Fix $\sigma\in (0, 1)$. There exist $\varepsilon_0\in(0, 1/2)$ and $\theta\in(0,1/8)$, depending on $d, m, n, \sigma$ and $A$, such that
\begin{align}\label{holder_es_onestep}
&\Big(\fint_{B(0, \theta)}|u_\varepsilon-P^\theta(u_\varepsilon)|^2\Big)^{1/2}+\theta^m\sqrt{\lambda}\Big(\fint_{B(0, \theta)}|u_\varepsilon|^2\Big)^{1/2}\leq \theta^{m-\sigma}\bigg\{\Big(\fint_{B(0, 1)}|u_\varepsilon-P^1(u_\varepsilon)|^2\Big)^{1/2}\nonumber\\
&\qquad\qquad+\sqrt{\lambda}\Big(\fint_{B(0, 1)}|u_\varepsilon|^2\Big)^{1/2}+\varepsilon_0^{-1}\sum_{|\alpha|\leq m}\Big(\fint_{B(0, 1)}|f_\alpha|^2\Big)^{1/2}\bigg\},
\end{align}
whenever $0<\varepsilon<\varepsilon_0$, $\lambda\in [0, \varepsilon_0^2]$ and $u_\varepsilon\in H^m(B(0, 1); \mathbb{R}^n)$ is a weak solution to
\begin{equation}\label{holder_eq_L2}
\mathcal{L}^{\widetilde{A}}_\varepsilon u_\varepsilon+\lambda u_\varepsilon =\sum_{|\alpha|\leq m}D^\alpha f_\alpha \quad\text{ in } B(0, 1),
\end{equation}
for some $\widetilde{A}\in \mathcal{A}$.
\end{lemma}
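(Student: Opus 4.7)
The approach is a compactness/contradiction argument in the spirit of Avellaneda--Lin \cite{Avellaneda1987_Compactness}, with the homogenization step provided by Theorem \ref{hom_thm_compactness2}. Fix $\sg\in(0,1)$ and a small $\ta\in(0,1/8)$ to be pinned down at the end. Suppose the conclusion fails. Then for each $l\ge 2$, taking $\ve_0=1/l$, there exist $\ve_l\in(0,1/l)$, $\lm_l\in[0,1/l^2]$, a translate $\wt A_l(x)=A(x+y_l)\in\mc A$, data $\{f_{\al,l}\}_{|\al|\le m}$, and a weak solution $u_l\in H^m(B(0,1);\R^n)$ of $\LL^{\wt A_l}_{\ve_l}u_l+\lm_l u_l=\sum_{|\al|\le m}D^\al f_{\al,l}$ in $B(0,1)$ which, after rescaling, may be assumed to satisfy
$$I_l:=\Big(\fint_{B(0,1)}|u_l-P^1(u_l)|^2\Big)^{1/2}+\sqrt{\lm_l}\Big(\fint_{B(0,1)}|u_l|^2\Big)^{1/2}+l\sum_{|\al|\le m}\Big(\fint_{B(0,1)}|f_{\al,l}|^2\Big)^{1/2}=1,$$
yet
$$\Big(\fint_{B(0,\ta)}|u_l-P^\ta(u_l)|^2\Big)^{1/2}+\ta^m\sqrt{\lm_l}\Big(\fint_{B(0,\ta)}|u_l|^2\Big)^{1/2}>\ta^{m-\sg}.$$

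Set $w_l:=u_l-P^1(u_l)$. Because $\LL^{\wt A_l}_{\ve_l}$ annihilates polynomials of degree at most $m-1$, $w_l$ solves
$$\LL^{\wt A_l}_{\ve_l}w_l+\lm_l w_l=\sum_{|\al|\le m}D^\al f_{\al,l}-\lm_l P^1(u_l)\quad\text{in}\ B(0,1).$$
From $I_l=1$ we read off $\|w_l\|_{L^2(B(0,1))}\le 1$, $\|f_{\al,l}\|_{L^2(B(0,1))}\le C/l$ and $\lm_l\|u_l\|_{L^2(B(0,1))}\le 1/l$; together with property (iii) of Lemma \ref{pre_lem_poincare}, the latter yields $\|\lm_l P^1(u_l)\|_{L^\infty(B(0,1))}\to 0$, so the entire right-hand side tends to $0$ strongly in $H^{-m}(B(0,1))$. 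Caccioppoli's inequality (Lemma \ref{pre_lem_cacci}), whose constants depend only on $d,m,n,\mu$, supplies a uniform $H^m(B(0,1/2))$-bound on $w_l$. Passing to a subsequence, $w_l\rightharpoonup w$ weakly in $H^m(B(0,1/2);\R^n)$ and strongly in $H^{m-1}$ by Rellich--Kondrachov; since $\lm_l\to 0$ and condition \eqref{hom_condition_Deltayz} is a standing assumption of this section, Theorem \ref{hom_thm_compactness2} forces $\LL_0 w=0$ in $B(0,1/2)$.

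Interior regularity for the constant-coefficient operator $\LL_0$ gives $\|\nabla^m w\|_{L^\infty(B(0,1/4))}\le C\|w\|_{L^2(B(0,1/2))}\le C$. Taylor expansion combined with property (i) of Lemma \ref{pre_lem_poincare} produces
$$\Big(\fint_{B(0,\ta)}|w-P^\ta(w)|^2\Big)^{1/2}\le C_0\,\ta^m\qquad(\ta<1/4),$$
with $C_0$ depending only on $d,m,n,\mu$ through $\wh A$. Since $P^\ta$ fixes the degree-$(m-1)$ polynomial $P^1(u_l)$, we have $u_l-P^\ta(u_l)=w_l-P^\ta(w_l)$; combining $L^2(B(0,\ta))$-convergence $w_l\to w$ with the coefficient-continuity of $P^\ta$ (property (iii) of Lemma \ref{pre_lem_poincare}), the first term of the failed estimate converges to $(\fint_{B(0,\ta)}|w-P^\ta(w)|^2)^{1/2}\le C_0\ta^m$. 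For the second term, decompose $u_l=w_l+P^1(u_l)$: the $w_l$-contribution is bounded by $C\ta^{m-d/2}\sqrt{\lm_l}\to 0$, while the $P^1(u_l)$-contribution is at most $\ta^m\sqrt{\lm_l}\|P^1(u_l)\|_{L^\infty(B(0,1))}\le C_1\ta^m$.

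Collecting, $\limsup_{l\to\infty}$ of the left-hand side of the failed inequality is at most $(C_0+C_1)\ta^m$, which contradicts the lower bound $\ta^{m-\sg}$ as soon as $\ta$ is chosen so small that $(C_0+C_1)\ta^\sg<1$. Fixing such a $\ta$ determines the corresponding $\ve_0$ and completes the proof. The principal obstacle is the passage from the rapidly oscillating operators $\LL^{\wt A_l}_{\ve_l}$ under arbitrary translations $y_l$ (possibly with $|y_l|\to\infty$) to the constant-coefficient operator $\LL_0$ in the weak limit; this is precisely what Theorem \ref{hom_thm_compactness2} is designed to handle, via the uniform control of $\Delta_{yz}A$ afforded by \eqref{hom_condition_Deltayz}.
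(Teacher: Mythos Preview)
Your proof is correct and follows the same compactness strategy as the paper: contradiction, Caccioppoli to gain compactness, Theorem \ref{hom_thm_compactness2} to pass to the constant-coefficient limit $\LL_0 w=0$, and interior regularity to force the contradiction with a sufficiently small $\ta$. The only organizational difference is that the paper first proves an intermediate claim \eqref{holder_es_claim} with the simpler normalization $\big(\fint_{B(0,1)}|u_l|^2\big)^{1/2}+\ve_l^{-1}\sum_\al\|f_{\al,l}\|\le 1$ (no $P^1$-subtraction, no $\sqrt{\lm}$) and then deduces \eqref{holder_es_onestep} from it by substituting $u_\ve-P^1(u_\ve)$ and multiplying through by $\sqrt{\lm}$; you instead normalize the full right-hand side from the start and work directly with $w_l=u_l-P^1(u_l)$, which forces you to split the term $\ta^m\sqrt{\lm_l}\|u_l\|$ as $w_l+P^1(u_l)$ and carry the harmless residual bound $C_1\ta^m$. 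Both routes are equivalent; the paper's intermediate claim keeps the limiting argument marginally cleaner, while your version avoids the extra reduction step.
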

\begin{proof}
\textbf{Claim:} Under the conditions of Lemma \ref{holder_lem_onestep}, there exist $\varepsilon_0\in(0, 1/2)$ and $\theta\in(0,1/8)$, depending on $\sigma$ and $A$, such that
\begin{align}\label{holder_es_claim}
&\quad\Big(\fint_{B(0, \theta)}|u_\varepsilon-P^\theta(u_\varepsilon)|^2\Big)^{1/2}+\theta^m\Big(\fint_{B(0, \theta)}|u_\varepsilon|^2\Big)^{1/2}\nonumber\\
&\leq \frac{\theta^{m-\sigma}}{2}\bigg\{\Big(\fint_{B(0, 1)}|u_\varepsilon|^2\Big)^{1/2}+\varepsilon_0^{-1}\sum_{|\alpha|\leq m}\Big(\fint_{B(0, 1)}|f_\alpha|^2\Big)^{1/2}\bigg\}.
\end{align}
We first show that \eqref{holder_es_claim} implies \eqref{holder_es_onestep}. In fact, since $u_\varepsilon-P^1(u_\varepsilon)$ is a weak solution to
\begin{equation*}
\mathcal{L}_\varepsilon u+\lambda u=\sum_{|\alpha|\leq m}D^\alpha f_\alpha-\lambda P^1(u_\varepsilon) \quad\text{ in } B(0, 1),
\end{equation*}
according to \eqref{holder_es_claim}, it holds that
\begin{align}
&\quad~\Big(\fint_{B(0, \theta)}|u_\varepsilon-P^\theta(u_\varepsilon)|^2\Big)^{\frac{1}{2}}\nonumber\\&=\Big(\fint_{B(0, \theta)}|(u_\varepsilon-P^1(u_\varepsilon))-P^\theta(u_\varepsilon-P^1(u_\varepsilon))|^2\Big)^{\frac{1}{2}}\label{holder_estimate_claim1}\\&\leq \frac{\theta^{m-\sigma}}{2}\bigg\{2\Big(\fint_{B(0, 1)}|u_\varepsilon-P^1(u_\varepsilon)|^2\Big)^{\frac{1}{2}}+\varepsilon_0^{-1}\sum_{|\alpha|\leq m}\Big(\fint_{B(0, 1)}|f_\alpha|^2\Big)^{\frac{1}{2}}+\sqrt{\lambda} \Big(\fint_{B(0, 1)}|u_\varepsilon|^2\Big)^{\frac{1}{2}}\bigg\},\nonumber
\end{align}
where we have used the linearity of $P_{m-1}$ and the property (i) of Lemma \ref{pre_lem_poincare} in the first step as well as the fact that $\lambda\leq \varepsilon_0^2$ in the last step. Moreover,  since $\lambda\in[0, 1)$, \eqref{holder_es_claim} implies that
\begin{align}\label{holder_estimate_claim2}
\theta^m\sqrt{\lambda}\Big(\fint_{B(0, \theta)}|u_\varepsilon|^2\Big)^{1/2}\leq \frac{\theta^{m-\sigma}}{2}\bigg\{\sqrt{\lambda}\Big(\fint_{B(0, 1)}|u_\varepsilon|^2\Big)^{1/2}+\varepsilon_0^{-1}\sum_{|\alpha|\leq m}\Big(\fint_{B(0, 1)}|f_\alpha|^2\Big)^{1/2}\bigg\}.
\end{align}
Combining \eqref{holder_estimate_claim1} and \eqref{holder_estimate_claim2}, we obtain \eqref{holder_es_onestep} immediately.

It remains to prove Claim \eqref{holder_es_claim}. If $u\in H^m(B(0, 1/2); \mathbb{R}^n)$ is a weak solution to
\begin{equation}\label{holder_eq_L0u}
\mathcal{L}_0u=0 \quad\text{ in } B(0, 1/2),
\end{equation}
by \eqref{pre_ineq_poincare} and the interior Lipschitz estimate (\cite{Niu2018_Convergence}), we have that
for any $\theta\in(0, 1/8)$,
\begin{equation}\label{holder_es_Lip}
\Big(\fint_{B(0, \theta)}|u-P^\theta(u)|^2\Big)^{1/2}+\theta^m\Big(\fint_{B(0, \theta)}|u|^2\Big)^{1/2}\leq C_0\theta^m\Big(\fint_{B(0, 1/2)}|u|^2\Big)^{1/2},
\end{equation}
where $C_0$ depends only on $d, m, n$ and $\mu$.
Now choose $\theta\in(0, 1/8)$ small enough such that $2^{d/2}C_0\theta^m<\frac{\theta^{m-\sigma}}{2}$. Suppose that \eqref{holder_es_claim} does not hold for this $\theta$ and any $\varepsilon_0\in(0, 1/2)$. Then there exist $A_l\subset \mathcal{A}$, $\{\varepsilon_l\}\subset\mathbb{R}_+$, $\{\lambda_l\}\subset[0, 1]$, $\{f_{\alpha, l}\}\subset L^2(B(0, 1); \mathbb{R}^n)$ for each $|\alpha|\leq m$ and $\{u_l\}\subset H^m(B(0, 1); \mathbb{R}^n)$, such that $\varepsilon_l\rightarrow 0$, $0\leq \lambda_l\leq \varepsilon_l^2$,
$$\mathcal{L}^{A_l}_{\varepsilon_l}(u_l)+\lambda_l u_l=\sum_{|\alpha|\leq m}D^\alpha f_{\alpha, l} \quad\text{ in } B(0, 1), $$ and moreover,
\begin{gather}
\Big(\fint_{B(0, 1)}|u_l|^2\Big)^{1/2}+\varepsilon_l^{-1}\sum_{|\alpha|\leq m}\Big(\fint_{B(0, 1)}|f_{\alpha, l}|^2\Big)^{1/2}\leq 1,\label{holder_ineq_contradict1}\\
\Big(\fint_{B(0, \theta)}|u_l-P^\theta(u_l)|^2\Big)^{1/2}+\theta^m\Big(\fint_{B(0, \theta)}|u_l|^2\Big)^{1/2}>\frac{\theta^{m-\sigma}}{2}. \label{holder_ineq_contradict2}\end{gather}
Thanks to Caccioppoli's inequality \eqref{pre_ineq_cacci1}, $\{u_l\}$ is bounded in $H^m(B(0, 1/2))$. By passing to a subsequence, we may assume that $u_l\rightarrow u$ weakly in $H^m(B(0, 1/2))$ and  $L^2(B(0, 1))$, and  strongly in $H^{m-1}(B(0, 1/2))$. Furthermore, note that $\lambda_l\rightarrow 0$ and $\sum_{|\alpha|\leq m}D^\alpha f_{\alpha, l}\rightarrow 0$ strongly in $H^{-m}(B(0,1/2))$. It follows from Theorem \ref{hom_thm_compactness2} that $u$ is a weak solution to \eqref{holder_eq_L0u}.
By letting $l\rightarrow \infty$, we obtain from \eqref{holder_ineq_contradict1} and \eqref{holder_ineq_contradict2} that
\begin{gather}\label{holder_ineq_contradict3}\Big(\fint_{B(0, 1)}|u|^2\Big)^{1/2}\leq 1,\\\label{holder_ineq_contradict4}\Big(\fint_{B(0, \theta)}|u-P^\theta(u)|^2\Big)^{1/2}+\theta^m\Big(\fint_{B(0, \theta)}|u|^2\Big)^{1/2}\geq\frac{\theta^{m-\sigma}}{2}, \end{gather}
where \eqref{holder_ineq_contradict3} is deduced from the weak convergence of $\{u_l\}$ in $L^2(B(0, 1))$, and property (iii) in Lemma \ref{pre_lem_poincare} is used to get \eqref{holder_ineq_contradict4}. These two inequalities, together with \eqref{holder_es_Lip}, yield that  $\frac{\theta^{m-\sigma}}{2}\leq2^{d/2}C_0\theta^m$, which contradicts the choice of $\theta$. This completes the proof.
\end{proof}

\begin{lemma}\label{holder_lem_iteration}
Let $\sigma\in(0, 1)$, $\varepsilon_0$ and $\theta$ be the constants given by Lemma \ref{holder_lem_onestep}. Let $u_\varepsilon\in H^m(B(0, 1); \mathbb{R}^n)$ be a weak solution to \eqref{holder_eq_L2} in $B(0, 1)$ and $\lambda\in [0, \varepsilon_0^2]$. If $0<\varepsilon<\varepsilon_0\theta^{k-1}$ for some $k\geq 1$, then
\begin{align}\label{holder_es_iteration}
&\quad\Big(\fint_{B(0, \theta^k)}|u_\varepsilon-P^{\theta^k}(u_\varepsilon)|^2\Big)^{1/2}+\theta^{km}\sqrt{\lambda}\Big(\fint_{B(0, \theta^k)}|u_\varepsilon|^2\Big)^{1/2}\nonumber\\
&\leq \theta^{k(m-\sigma)}\bigg\{\Big(\fint_{B(0, 1)}|u_\varepsilon-P^1(u_\varepsilon)|^2\Big)^{1/2}+\sqrt{\lambda}\Big(\fint_{B(0, 1)}|u_\varepsilon|^2\Big)^{1/2}+I_k\bigg\},
\end{align}
where $$I_k:=\varepsilon_0^{-1}\sum_{|\alpha|\leq m}\sum_{l=0}^{k-1}\theta^{l(m-|\alpha|+\sigma)}\Big(\fint_{B(0, \theta^l)}|f_\alpha|^2\Big)^{1/2}. $$
\end{lemma}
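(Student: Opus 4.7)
The plan is to prove \eqref{holder_es_iteration} by induction on $k\geq 1$. The base case $k=1$ is exactly the conclusion of Lemma \ref{holder_lem_onestep}, since $I_1=\ve_0^{-1}\sum_{|\al|\leq m}\big(\fint_{B(0,1)}|f_\al|^2\big)^{1/2}$ matches the error term appearing in \eqref{holder_es_onestep}.

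For the inductive step, assume \eqref{holder_es_iteration} holds at level $k$ and take any $\ve$ with $0<\ve<\ve_0\ta^k$, so that both the hypothesis at level $k$ and at level $k+1$ are in force. The key idea is to rescale: set $v(x):=u_\ve(\ta^k x)$ for $x\in B(0,1)$. Using $D^\be v(x)=\ta^{k|\be|}(D^\be u_\ve)(\ta^k x)$, a direct substitution into \eqref{holder_eq_L2} shows that $v$ is a weak solution of
\begin{equation*}
\LL^{\wt{A}}_{\ve/\ta^k}v+\ta^{2km}\lm\,v=\sum_{|\al|\leq m}D^\al g_\al\quad\text{in }B(0,1),
\end{equation*}
where $g_\al(x):=\ta^{k(2m-|\al|)}f_\al(\ta^k x)$. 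The coefficient matrix is still $\wt{A}\in\mc{A}$; the rescaled oscillation satisfies $\ve/\ta^k<\ve_0$ by hypothesis; and $\ta^{2km}\lm\leq\lm\leq\ve_0^2$ since $\ta<1$. Hence Lemma \ref{holder_lem_onestep} applies to $v$ with the same constants $\sg$, $\ve_0$, $\ta$.

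Next, translate the resulting one-step estimate for $v$ back to $u_\ve$. Under the change of variables $y=\ta^k x$ one has $\fint_{B(0,\rho)}|v|^2\,dx=\fint_{B(0,\rho\ta^k)}|u_\ve|^2\,dy$ for every $\rho>0$, while property (ii) of Lemma \ref{pre_lem_poincare}, applied to $y\mapsto v(\ta y)=u_\ve(\ta^{k+1}y)$, gives $P^\ta(v)(x)=P^{\ta^{k+1}}(u_\ve)(\ta^k x)$. The same change of variables also identifies $\big(\fint_{B(0,1)}|g_\al|^2\big)^{1/2}$ with $\ta^{k(2m-|\al|)}\big(\fint_{B(0,\ta^k)}|f_\al|^2\big)^{1/2}$. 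Combining these identifications yields
\begin{align*}
&\Big(\fint_{B(0,\ta^{k+1})}|u_\ve-P^{\ta^{k+1}}(u_\ve)|^2\Big)^{1/2}+\ta^{(k+1)m}\sqrt{\lm}\Big(\fint_{B(0,\ta^{k+1})}|u_\ve|^2\Big)^{1/2}\\
&\leq\ta^{m-\sg}\bigg\{\Big(\fint_{B(0,\ta^k)}|u_\ve-P^{\ta^k}(u_\ve)|^2\Big)^{1/2}+\ta^{km}\sqrt{\lm}\Big(\fint_{B(0,\ta^k)}|u_\ve|^2\Big)^{1/2}\\
&\qquad\qquad+\ve_0^{-1}\sum_{|\al|\leq m}\ta^{k(2m-|\al|)}\Big(\fint_{B(0,\ta^k)}|f_\al|^2\Big)^{1/2}\bigg\}.
\end{align*}

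Finally, apply the inductive hypothesis at level $k$ to bound the first two terms in the braces by $\ta^{k(m-\sg)}\{(\cdots)+I_k\}$, where $(\cdots)$ denotes the $B(0,1)$-terms from the r.h.s.\ of \eqref{holder_es_iteration}. After factoring out $\ta^{(k+1)(m-\sg)}$, the leftover $f$-term is multiplied by $\ta^{-k(m-\sg)}\ta^{k(2m-|\al|)}=\ta^{k(m-|\al|+\sg)}$, so it becomes exactly the $l=k$ summand in $I_{k+1}$. Hence $I_k$ plus this leftover equals $I_{k+1}$, and \eqref{holder_es_iteration} at level $k+1$ follows. The main delicate point of the argument is the bookkeeping for the rescaling — in particular, the identification $P^\ta(v)=P^{\ta^{k+1}}(u_\ve)(\ta^k\cdot)$ via property (ii) of Lemma \ref{pre_lem_poincare}, and the precise matching of $\ta$-powers so that the $f$-terms telescope into $I_{k+1}$; the remainder is a routine iteration of the one-step lemma.
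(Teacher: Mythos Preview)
Your proof is correct and follows essentially the same approach as the paper's: the paper explicitly states that \eqref{holder_es_iteration} follows from a standard induction on $k$ using rescaling and property (ii) of Lemma \ref{pre_lem_poincare}, which is precisely what you carry out. Your bookkeeping of the $\ta$-powers and the identification $P^\ta(v)(\cdot)=P^{\ta^{k+1}}(u_\ve)(\ta^k\cdot)$ are both accurate.
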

\begin{proof}
With Lemma \ref{holder_lem_onestep} at our disposal, \eqref{holder_es_iteration} follows from a standard induction argument on $k$ as \cite[Lemma 6.6]{Shen2018_Approximate}. We omit the details and just mention that rescaling and property (ii) in Lemma \ref{pre_lem_poincare} are used in the process.
\end{proof}

\begin{proof}[Proof of Theorem \ref{holder_thm_holder}]
Let $\varepsilon_0$ and $\theta$ be given by Lemma \ref{holder_lem_onestep}. For the case $\lambda\in[\varepsilon_0^2R^{-2m}, R^{-2m}]$, $u_\varepsilon$  satisfies $\mathcal{L}_\varepsilon^{\widetilde{A}}u_\varepsilon+\widetilde{\lambda}u_\varepsilon=\sum_{|\alpha|\leq m}D^\alpha \widetilde{f}_\alpha$ in $B$, where $\widetilde{A}=\varepsilon_0^2A$, $\widetilde{\lambda}=\varepsilon_0^2\lambda$, $\widetilde{f}_\alpha=\varepsilon_0^2f_\alpha$. If Theorem \ref{holder_thm_holder} holds for $\lambda\in[0, \varepsilon_0^2R^{-2m}]$, we will obtain that for $\varepsilon\leq r\leq R/2$,
\begin{align*}
&\quad\Big(\fint_{B(x_0, r)}|\nabla^{m} u_\varepsilon|^2\Big)^{1/2}+\sqrt{\widetilde{\lambda}}\Big(\fint_{B(x_0, r)}|u_\varepsilon|^2\Big)^{1/2}\nonumber\\
&\leq C_\sigma\Big(\frac{R}{r}\Big)^\sigma\bigg\{R^{-m}\Big(\fint_{B(x_0, R)}|u_\varepsilon|^2\Big)^{1/2}+\sum_{|\alpha|\leq m}\sup_{r\leq t\leq R}t^{m-|\alpha|}\Big(\fint_{B(x_0, t)}|\widetilde{f}_\alpha|^2\Big)^{1/2}\bigg\},
\end{align*}
where $C_\sigma$ depends only on $d, m ,n, \sigma$ and $\widetilde{A}$. This implies \eqref{holder_es_large} directly. Thus, in the following we suppose that $\lambda\in[0, \varepsilon_0^2R^{-2m}]$.

By translation and dilation, we may assume that $x_0=0$ and $R=1$. We claim that
\begin{align}
&\Big(\fint_{B(0, r)}|\nabla^{m} u_\varepsilon|^2\Big)^{1/2}+\sqrt{\lambda}\Big(\fint_{B(0, r)}|u_\varepsilon|^2\Big)^{1/2}\leq C_\sigma\Big(\frac{1}{r}\Big)^\sigma\bigg\{\Big(\fint_{B(0, 1)}|\nabla^{m} u_\varepsilon|^2\Big)^{1/2} \nonumber\\&\qquad \quad\quad\quad\quad\quad\quad\quad+\Big(\fint_{B(0, 1)}|u_\varepsilon|^2\Big)^{1/2}+\sum_{|\alpha|\leq m}\sup_{r\leq t\leq 1}t^{m-|\alpha|}\Big(\fint_{B(0, t)}|f_\alpha|^2\Big)^{1/2}\bigg\}, \label{holder_es_large_claim}
\end{align}if $\lambda\in[0, \varepsilon_0^2]$, $\varepsilon\leq r\leq 1$.
Obviously, \eqref{holder_es_large} follows from \eqref{holder_es_large_claim} and Cacciopolli's inequality \eqref{pre_ineq_cacci1}.
It remains to prove \eqref{holder_es_large_claim}. Since the case $r\geq\varepsilon_0\theta$ is trivial, we may assume that $r<\varepsilon_0\theta$. If $\varepsilon_0\theta^{k+1}\leq r<\varepsilon_0\theta^k$ for some $k\geq1$, we have
\begin{align*}
&\quad\Big(\fint_{B(0, r)}|\nabla^{m} u_\varepsilon|^2\Big)^{1/2}+\sqrt{\lambda}\Big(\fint_{B(0, r)}|u_\varepsilon|^2\Big)^{1/2}\\&\leq C\bigg\{\Big(\fint_{B(0, \theta^k/2)}|\nabla^{m} u_\varepsilon|^2\Big)^{1/2}+\sqrt{\lambda}\Big(\fint_{B(0, \theta^k/2)}|u_\varepsilon|^2\Big)^{1/2}\bigg\}\\
&\leq C\theta^{-k\sigma}\bigg\{\Big(\fint_{B(0, 1)}|u_\varepsilon-P^1(u_\varepsilon)|^2\Big)^{1/2}+\sqrt{\lambda}\Big(\fint_{B(0, 1)}|u_\varepsilon|^2\Big)^{1/2}+I_k \\&\qquad +\sum_{|\alpha|\leq m} \theta^{k(m-|\alpha|+\sigma)} \Big(\fint_{B(0, \theta^k)}|f_\alpha|^2\Big)^{1/2}\bigg\}\\
&\leq Cr^{-\sigma}\bigg\{\Big(\fint_{B(0, 1)}|\nabla^m u_\varepsilon|^2\Big)^{1/2}+\sqrt{\lambda}\Big(\fint_{B(0, 1)}|u_\varepsilon|^2\Big)^{1/2}+I_{k+1}\bigg\},
\end{align*}
where we have used Cacciopolli's inequality \eqref{pre_ineq_cacci_remark} and Lemma \ref{holder_lem_iteration} in the second inequality as well as property (iv) in Lemma \ref{pre_lem_poincare} in the last step.
Observing that $$I_k\leq C\sum_{|\alpha|\leq m}\sup_{r\leq t\leq 1}t^{m-|\alpha|}\Big(\fint_{B(0, t)}|f_\alpha|^2\Big)^{1/2},$$
we obtain \eqref{holder_es_large_claim} and complete the proof.
\end{proof}

\begin{remark}\label{holder_remark_B2}
Fixing $x_0=0$ in Theorem \ref{holder_thm_holder}, we could avoid translations of $A$ and employ Theorem \ref{hom_thm_compactness} instead of Theorem \ref{hom_thm_compactness2} in the compactness argument of Lemma \ref{holder_lem_onestep} without assuming that $A$ satisfies \eqref{hom_condition_Deltayz}. This results in \eqref{holder_es_large} with $x_0=0$ under the assumption that $A\in B^2$ and satisfies \eqref{intro_cond_bdd}--\eqref{intro_cond_el}.
\end{remark}

Now we establish a Liouville theorem for the higher-order elliptic systems with $B^2$-coefficients.

\begin{coro}
Assume that $A\in B^2$ satisfies \eqref{intro_cond_bdd}--\eqref{intro_cond_el}. Let $u\in H^m_{loc}(\mathbb{R}^d; \mathbb{R}^n)$ be a weak solution to $\mathcal{L}_1 u=0 \text{ in } \mathbb{R}^d$. Suppose that there exist a constant $C_u>0$ and some $\delta>0$ such that
\begin{align*}
\Big(\fint_{B(0, R)}|u|^2\Big)^{1/2}\leq C_uR^{m-\delta} ~\quad\textrm{ for any}~ R>1.
\end{align*}
Then $u\in \mathcal{P}_{m-1}$.
\end{coro}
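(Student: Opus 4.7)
The plan is to apply the large-scale Hölder estimate (Theorem \ref{holder_thm_holder}) to $u$ itself, exploiting that $u$ solves $\LL_1 u = 0$ on all of $\R^d$, and then let the outer scale tend to infinity to kill the $m$-th derivatives. Since we only assume $A \in B^2(\R^d)$ (no assumption like \eqref{hom_condition_Deltayz} or $APW^2$), I would invoke Remark \ref{holder_remark_B2}, which supplies \eqref{holder_es_large} centered at $x_0 = 0$ under precisely these hypotheses.

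Concretely, I would take $\ve = 1$, $\lm = 0$, $f_\al \equiv 0$, $x_0 = 0$, and any $R \geq 2$, so that $\lm = 0 \in [0, R^{-2m}]$ trivially. Theorem \ref{holder_thm_holder} (via Remark \ref{holder_remark_B2}) then gives, for every $\sg \in (0,1)$ and every $1 \leq r \leq R/2$,
\begin{equation*}
\Big(\fint_{B(0, r)}|\nabla^{m} u|^2\Big)^{1/2} \leq C_\sg \Big(\frac{R}{r}\Big)^\sg R^{-m}\Big(\fint_{B(0, R)}|u|^2\Big)^{1/2}.
\end{equation*}
Inserting the growth assumption $(\fint_{B(0,R)}|u|^2)^{1/2} \leq C_u R^{m-\de}$ yields
\begin{equation*}
\Big(\fint_{B(0, r)}|\nabla^{m} u|^2\Big)^{1/2} \leq C_\sg C_u\, r^{-\sg} R^{\sg - \de}.
\end{equation*}

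Now I would fix $\sg \in (0, \de)$, so the exponent $\sg - \de$ is strictly negative, and send $R \to \infty$ with $r \geq 1$ held fixed. The right-hand side tends to $0$, forcing $\nabla^{m} u = 0$ a.e.\ on $B(0, r)$. Since $r \geq 1$ is arbitrary, $\nabla^{m} u \equiv 0$ on $\R^d$. By the standard distributional characterization (all $m$-th partials vanishing on the connected set $\R^d$), $u$ must coincide with a polynomial of degree at most $m - 1$, i.e.\ $u \in \mc{P}_{m-1}$.

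There is no genuine obstacle here once Theorem \ref{holder_thm_holder} is available in the $B^2$ setting at the origin; the only point that requires care is that Theorem \ref{holder_thm_holder} as stated assumes either $A \in APW^2(\R^d)$ or condition \eqref{hom_condition_Deltayz}, neither of which is imposed in the corollary, so the appeal to Remark \ref{holder_remark_B2} to restrict to the base point $x_0 = 0$ is essential. The parameter $\sg$ must be chosen strictly less than $\de$ (but this is allowed since $\sg$ ranges over $(0,1)$ and the dependence of $C_\sg$ on $\sg$ is harmless, as the limit $R \to \infty$ is taken with $\sg$ and $r$ fixed).
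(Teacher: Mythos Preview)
Your proof is correct and essentially identical to the paper's own argument: both invoke Remark \ref{holder_remark_B2} to apply \eqref{holder_es_large} at $x_0=0$ with $\ve=1$, $\lm=0$, $f_\al=0$, insert the growth hypothesis to obtain $(\fint_{B(0,r)}|\nabla^m u|^2)^{1/2}\leq C_{\sg,u}\,r^{-\sg}R^{\sg-\de}$, fix $\sg<\de$, and let $R\to\infty$. Your additional remarks on why Remark \ref{holder_remark_B2} is needed (since only $A\in B^2(\R^d)$ is assumed) are accurate and make explicit a point the paper leaves implicit.
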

\begin{proof}
Thanks to Remark \ref{holder_remark_B2}, for $1<r<R/2$ and any $\sigma\in(0, 1)$  we have
\begin{align*}\Big(\fint_{B(0, r)}|\nabla^m u|^2\Big)^{1/2}\leq C_\sigma\Big(\frac{R}{r}\Big)^\sigma R^{-m}\Big(\fint_{B(0, R)}|u|^2\Big)^{1/2}
\leq C_{\sigma, u}r^{-\sigma}R^{\sigma-\delta}.
\end{align*}
By choosing $\sigma<\delta$ and letting $R\rightarrow\infty$, we see that $\nabla^mu=0$ in $B(0, r)$. Since $r>1$ is arbitrary, it follows that $\nabla^mu=0$ in $\mathbb{R}^d$, which implies that  $u\in \mathcal{P}_{m-1}$.
\end{proof}

As an application of Theorem \ref{holder_thm_holder}, we obtain the following result.

\begin{theorem}\label{holder_thm_holder2}
Let $u$ be the solution to
\begin{equation*}(-1)^{m}\sum_{|\alpha|=|\beta|=m} D^\alpha(A^{\alpha\beta}D^\beta u)+T^{-2m}u=\sum_{|\alpha|\leq m} D^\alpha f_\alpha~~~\text{in}~\mathbb{R}^d, \end{equation*}
given by Proposition \ref{appcor_prop_1}, where $f_\alpha\in L^2_{\rm{loc, unif}}(\mathbb{R}^d; \mathbb{R}^n)$ for $|\alpha|\leq m$. Then for $2\leq q\leq q^+$, $\sigma\in(0, 1)$ and $1\leq r\leq T$, we have
\begin{align*}
\|\nabla^m u\|_{S^q_r}+T^{-m}\|u\|_{S^2_r}\leq C_\sigma\Big(\frac{T}{r}\Big)^\sigma\bigg\{\sum_{|\alpha|<m}\sup_{r\leq t\leq T}t^{m-|\alpha|}\|f_\alpha\|_{S^2_t}+\sum_{|\alpha|=m}\|f_\alpha\|_{S^q_r}\bigg\},
\end{align*}
where $q^+>2$ is given in Lemma \ref{pre_lem_reverse} and $C_\sigma$ depends only on $d, m, n, \sigma$ and $A$.
\end{theorem}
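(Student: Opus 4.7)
The plan is to split $u = u_1 + u_2$ by the order of the forcing (using Proposition \ref{appcor_prop_1} and uniqueness), where $u_1$ absorbs the top-order sources $\sum_{|\al|=m}D^\al f_\al$ and $u_2$ absorbs the lower-order sources $\sum_{|\al|<m}D^\al f_\al$. Each piece will be estimated by first obtaining an $S^2_r$-bound from Theorem \ref{holder_thm_holder} applied with $\ve = 1$, $\lm = T^{-2m}$, $R = T$ (together with Lemma \ref{appcor_lem_SR} to absorb the resulting $T^{-m}\|u_i\|_{S^2_T}$ term), and then upgrading the top gradient to $S^q_r$ via a suitable application of the Meyers-type reverse H\"older inequality of Lemma \ref{pre_lem_reverse}.

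For $u_1$ the upgrade step uses \eqref{pre_ineq_reverse2} directly with $\lm = T^{-2m}$; the factor $\lm^{(|\al|-m)/(2m)} = T^{m-|\al|}$ equals $1$ for $|\al|=m$, so no harmful constants arise. Combined with the $S^2_r$-estimate, this yields
\begin{equation*}
\|\nabla^m u_1\|_{S^q_r} + T^{-m}\|u_1\|_{S^2_r} \leq C_\sg (T/r)^\sg \sum_{|\al|=m}\|f_\al\|_{S^q_r}.
\end{equation*}

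For $u_2$ the same scheme produces the analogous $S^2_r$-bound, but a direct use of \eqref{pre_ineq_reverse2} to upgrade to $S^q_r$ would pollute the estimate with the large factors $T^{m-|\al|}$ on $\|f_\al\|_{S^q}$. To sidestep this, I would rewrite the equation as $\LL_1 u_2 = -T^{-2m}u_2 + \sum_{|\al|<m}D^\al f_\al$, view $T^{-2m}u_2$ as an extra zeroth-order datum, and apply the $\lm = 0$ version \eqref{pre_ineq_reverse1}, in which lower-order data enter only through $L^2$-averages with the natural weights $r^{m-|\al|}$. This produces
\begin{equation*}
\|\nabla^m u_2\|_{S^q_r} \leq C\Big\{\|\nabla^m u_2\|_{S^2_r} + \sum_{|\al|<m}r^{m-|\al|}\|f_\al\|_{S^2_r} + r^m T^{-2m}\|u_2\|_{S^2_r}\Big\},
\end{equation*}
in which the last term is bounded by $T^{-m}\|u_2\|_{S^2_r}$ since $r \leq T$, and $r^{m-|\al|}\|f_\al\|_{S^2_r} \leq \sup_{r\leq t\leq T}t^{m-|\al|}\|f_\al\|_{S^2_t}$ (by taking $t = r$). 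The remaining $\|\nabla^m u_2\|_{S^2_r}$ and $T^{-m}\|u_2\|_{S^2_r}$ are controlled by the $S^2_r$-bound already obtained.

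Adding the estimates for $u_1$ and $u_2$ proves the theorem on the range $1 \leq r \leq T/2$ covered by Theorem \ref{holder_thm_holder}; the remaining range $r \in (T/2, T]$ is immediate since $(T/r)^\sg$ is a bounded constant there and all quantities can be compared with the scale $T/2$ through \eqref{hom_inequ_1} and Lemma \ref{appcor_lem_SR}. The main obstacle is the mismatch between \eqref{pre_ineq_reverse1} and \eqref{pre_ineq_reverse2} in how lower-order data are measured; the splitting of $u$ by the order of the forcing together with the $\lm = 0$ trick for $u_2$ is precisely what restores the natural dependence on $\|f_\al\|_{S^2}$ for $|\al| < m$ in the final bound.
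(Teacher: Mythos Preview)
Your argument is correct but takes a slightly more elaborate route than the paper. The paper does not split $u$: it applies Theorem~\ref{holder_thm_holder} (with $\ve=1$, $\lm=T^{-2m}$, $R=T$) directly to $u$ to obtain the $S^2_r$-bound, and then upgrades to $S^q_r$ using \eqref{pre_ineq_reverse1}---the $\lambda=0$ reverse H\"older inequality---applied to the rewritten equation $\LL_1 u = -T^{-2m}u + \sum_{|\al|\le m} D^\al f_\al$. This is exactly your ``$\lambda=0$ trick,'' but used on all of $u$ rather than only on $u_2$. The point is that \eqref{pre_ineq_reverse1} already measures top-order data in $L^q$ and lower-order data in $L^2$ with the natural weights $r^{m-|\al|}$, so there is no obstruction to treating both kinds of forcing at once; the decomposition $u=u_1+u_2$ and the separate appeal to \eqref{pre_ineq_reverse2} for $u_1$ are therefore unnecessary. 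Your approach works, but the paper's is shorter because a single application of \eqref{pre_ineq_reverse1} already delivers the correct mixed dependence on the data.
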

\begin{proof}
The case $T/2\leq r\leq T$ follows directly from \eqref{appcor_es_u}. For the case $1\leq r\leq T/2$, by Theorem \ref{holder_thm_holder} with $\varepsilon=1, \lambda=T^{-2m}, R=T$, and inequality \eqref{pre_ineq_reverse1}, we obtain for $x_0\in R^d$,
\begin{align*}
\Big(\fint_{B(x_0, r)}|\nabla^m u|^q\Big)^{1/q}+T^{-m}\Big(\fint_{B(x_0, r)}|u|^2\Big)^{1/2}\leq C_\sigma\Big(\frac{T}{r}\Big)^\sigma\bigg\{T^{-m}\Big(\fint_{B(x_0, T)}|u|^2\Big)^{1/2}\\ +\sum_{|\alpha|<m}\sup_{r\leq t\leq T}t^{m-|\alpha|}\Big(\fint_{B(x_0, t)}|f_\alpha|^2\Big)^{1/2}+\sum_{|\alpha|=m}\sup_{r\leq t\leq T}\Big(\fint_{B(x_0, t)}|f_\alpha|^q\Big)^{1/q}\bigg\},
\end{align*}
where $2\leq q\leq q^+$. Taking the supremum over $x_0\in \mathbb{R}^d$, it yields
\begin{align*}
\|\nabla^m u\|_{S^q_r}+T^{-m}\|u\|_{S^2_r}\leq& C_\sigma\Big(\frac{T}{r}\Big)^\sigma\bigg\{\sum_{|\alpha|<m}\sup_{r\leq t\leq T}t^{m-|\alpha|}\|f_\alpha\|_{S^2_t}+\sum_{|\alpha|=m}\sup_{r\leq t\leq T}\|f_\alpha\|_{S^q_t}\bigg\}\\
\leq& C_\sigma\Big(\frac{T}{r}\Big)^\sigma\bigg\{\sum_{|\alpha|<m}\sup_{r\leq t\leq T}t^{m-|\alpha|}\|f_\alpha\|_{S^2_t}+\sum_{|\alpha|=m}\|f_\alpha\|_{S^q_r}\bigg\},
\end{align*}
where Proposition \ref{appcor_prop_1} is used for the first inequality, and \eqref{hom_inequ_1} is used for the second.
\end{proof}

\begin{coro}\label{holder_coro_chi}
Let $T>1$ and $\sigma\in (0, 1)$. Then for any $1\leq r\leq T$,
\begin{gather}
\|\nabla^m \chi_T\|_{S^{q}_r}+T^{-m}\|\chi_T\|_{S^2_r}\leq C_\sigma\left(\frac{T}{r}\right)^\sigma,\label{holder_es_chi}\\
\|\nabla^m(\chi_T-\chi_{\widetilde{T}})\|_{S^{q}_r}+T^{-m}\|\chi_T-\chi_{\widetilde{T}}\|_{S^2_r}\leq C_\sigma\left(\frac{T}{r}\right)^\sigma\sup_{r\leq t\leq T}t^m\|T^{-2m}\chi_{\widetilde{T}}\|_{S^2_t},\label{holder_es_delta_chi}
\end{gather}
for any $\widetilde{T}\geq T$, where $2\leq q\leq q^+$ and $C_\sigma$ depends only on $d, m, n, \sigma$ and $A$.
\end{coro}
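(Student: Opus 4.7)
The plan is to derive both estimates as direct applications of Theorem \ref{holder_thm_holder2} to appropriate equations, so the work is essentially bookkeeping: identifying the right-hand side $\{f_\alpha\}$ in each case and bounding its $S^q$-norms.

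For \eqref{holder_es_chi}, I would start from the defining equation \eqref{appcor_eq2} for $\chi_T = \chi_{T,l}^\gamma$, rewritten as
\[
(-1)^m \sum_{|\al|=|\be|=m}D^\al(A^{\al\be}D^\be \chi_T) + T^{-2m}\chi_T = \sum_{|\al|\leq m}D^\al f_\al,
\]
where $f_\al = -A^{\al\be}D^\be P_l^\gamma$ for $|\al|=m$ and $f_\al = 0$ otherwise. Since $D^\be P_l^\gamma$ is a constant (either $0$ or $e_l$) for $|\be|=m$ and $\|A\|_\infty \leq 1/\mu$, we get $\|f_\al\|_{S^q_r}\leq C$ uniformly in $r$. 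Plugging this into Theorem \ref{holder_thm_holder2} gives $\|\nabla^m\chi_T\|_{S^q_r}+T^{-m}\|\chi_T\|_{S^2_r}\leq C_\sg(T/r)^\sg$, as desired.

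For \eqref{holder_es_delta_chi}, set $v = \chi_T - \chi_{\tilde T}$. Subtracting the equations $\LL_1\chi_T + T^{-2m}\chi_T = -\LL_1 P$ and $\LL_1 \chi_{\tilde T} + \tilde T^{-2m}\chi_{\tilde T} = -\LL_1 P$ yields
\[
\LL_1 v + T^{-2m}v = (\tilde T^{-2m} - T^{-2m})\chi_{\tilde T}.
\]
This is exactly the form in Proposition \ref{appcor_prop_1} with $f_\alpha = 0$ for $|\alpha|\geq 1$ and only $f_0 = (\tilde T^{-2m}-T^{-2m})\chi_{\tilde T}$ nonzero. Since $\tilde T\geq T$ gives $|\tilde T^{-2m}-T^{-2m}|\leq T^{-2m}$, we have the pointwise bound $|f_0|\leq T^{-2m}|\chi_{\tilde T}|$, hence $\|f_0\|_{S^2_t}\leq \|T^{-2m}\chi_{\tilde T}\|_{S^2_t}$. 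Applying Theorem \ref{holder_thm_holder2} to $v$ (with the sum over $|\al|=m$ vanishing and only the $|\al|=0$ term in the first sum contributing with the factor $t^{m-|\al|}=t^m$) delivers precisely the right-hand side $C_\sg(T/r)^\sg \sup_{r\leq t\leq T} t^m\|T^{-2m}\chi_{\tilde T}\|_{S^2_t}$.

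There is no real obstacle here; the only small point requiring care is that Theorem \ref{holder_thm_holder2} treats the case $1\leq r\leq T$ and the conclusion for $r$ near $T$ follows from the basic bound in Proposition \ref{appcor_prop_1}, both already handled in the proof of Theorem \ref{holder_thm_holder2}. So both \eqref{holder_es_chi} and \eqref{holder_es_delta_chi} come out in a few lines each.
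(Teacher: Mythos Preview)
Your proposal is correct and matches the paper's own proof essentially verbatim: apply Theorem \ref{holder_thm_holder2} to the defining equation \eqref{appcor_eq2} of $\chi_T$ for \eqref{holder_es_chi}, and to the difference equation $\LL_1(\chi_T-\chi_{\wt T})+T^{-2m}(\chi_T-\chi_{\wt T})=-(T^{-2m}-\wt T^{-2m})\chi_{\wt T}$ together with $0\le T^{-2m}-\wt T^{-2m}\le T^{-2m}$ for \eqref{holder_es_delta_chi}. There is nothing to add.
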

\begin{proof}
The results are obtained by applying Theorem \ref{holder_thm_holder2} to the equations of $\chi_T$ and $\chi_T-\chi_{\widetilde{T}}$ respectively, where $u=\chi_T-\chi_{\widetilde{T}}$ satisfies
\begin{equation}\label{holder_eq_delta_chi}(-1)^{m}\sum_{|\alpha|=|\beta|=m} D^\alpha(A^{\alpha\beta}D^\beta u)+T^{-2m}u=-(T^{-2m}-\widetilde{T}^{-2m})\chi_{\widetilde{T}}~~~\text{ in }~\mathbb{R}^d, \end{equation}
and $0\leq T^{-2m}-\widetilde{T}^{-2m}\leq T^{-2m}$.
\end{proof}


\section{A quantitative ergodic theorem in the higher-order case}\label{sec_ergodic}

In this part, we present a generalized quantitative ergodic theorem to bound $\|f\|_{S_1^2}$ with the higher-order differences and the $S_R^2$-norm of the derivatives of $f$ up to certain order.

Let $f\in L^1_{\rm{loc, unif}}(\mathbb{R}^d)$ and define
\begin{equation*}\omega_k(f; L, R):=\sup_{y_1\in \mathbb{R}^d}\inf_{|z_1|\leq L}\cdots\sup_{y_k\in \mathbb{R}^d}\inf_{|z_k|\leq L}\|\Delta_{y_1z_1}\cdots\Delta_{y_kz_k}(f)\|_{S^2_R},
\end{equation*}
where $0<L, R<\infty$ and $k\geq 1$. Throughout this section, we also define
\begin{equation}\label{ergodic_def_u}
u(x, t):=f*\Phi_t(x),
\end{equation}
where
$$\Phi_t(y)=t^{-d/2}\Phi(y/\sqrt{t})=c_d t^{-d/2}\exp(-|y|^2/(4t))$$
is the standard heat kernel. Therefore, $u$ satisfies the heat equation $\partial_t u=\Delta_x u$.

The following two lemmas were obtained in \cite{Armstrong2016_Bounded, Shen2018_Approximate}.

\begin{lemma}\label{ergodic_lem_1}
Let $u$ be defined as \eqref{ergodic_def_u}. Then for $0<R<\infty$,
\begin{equation*}
\|u\|_{S^2_R}\leq C\{\|u(\cdot, R^2)\|_\infty+R\|\nabla f\|_{S^2_R}\},
\end{equation*}
where $C$ depends only on $d$.
\end{lemma}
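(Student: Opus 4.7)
My plan is to decompose $f = u(\cdot, R^2) + \bigl(f - u(\cdot, R^2)\bigr)$ and treat the two pieces separately. The triangle inequality together with the trivial bound $\|u(\cdot, R^2)\|_{S^2_R} \leq \|u(\cdot, R^2)\|_\infty$ reduces the lemma to proving the gradient estimate
\[
\|f - u(\cdot, R^2)\|_{S^2_R} \leq CR\,\|\nabla f\|_{S^2_R}.
\]
This is the core of the statement: it quantifies how much the heat-kernel averaging at scale $\sqrt{t} = R$ can differ from $f$ itself, and the natural scale is exactly $R$ times one derivative.

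For the gradient estimate I would use the heat-kernel representation together with $\int \Phi_{R^2} = 1$ to write
\[
f(x) - u(x, R^2) = \int_{\R^d} \Phi_{R^2}(y)\bigl(f(x) - f(x-y)\bigr)\, dy
  = \int_{\R^d} \Phi_{R^2}(y) \int_0^1 y \cdot \nabla f(x - sy)\, ds\, dy.
\]
Applying Cauchy--Schwarz in the inner integral and Jensen's inequality in the outer one (treating $\Phi_{R^2}\,dy$ as a probability measure), I obtain the pointwise bound
\[
|f(x) - u(x, R^2)|^2 \leq \int_{\R^d}\int_0^1 \Phi_{R^2}(y)\, |y|^2\, |\nabla f(x - sy)|^2\, ds\, dy.
\]
Integrating over $B(x_0, R)$, swapping the order of integration by Fubini, and using translation invariance of the $S^2_R$-norm gives
\[
\int_{B(x_0, R)}|\nabla f(x - sy)|^2\, dx = \int_{B(x_0 - sy, R)}|\nabla f|^2 \leq |B(0,R)|\,\|\nabla f\|_{S^2_R}^2,
\]
uniformly in $s \in [0,1]$ and $y \in \R^d$. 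Dividing by $|B(0,R)|$, taking the supremum in $x_0$, and using that the second moment of $\Phi_{R^2}$ is $CR^2$ yields $\|f - u(\cdot, R^2)\|_{S^2_R}^2 \leq CR^2 \|\nabla f\|_{S^2_R}^2$, which combined with the initial decomposition produces the lemma.

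There is no serious obstacle: the only point requiring any care is that $\Phi_{R^2}$ has unbounded support, so one might worry about contributions from $|y| \gg R$ when shifting the ball $B(x_0, R)$ by $sy$. This is precisely what the translation invariance of the $S^2_R$-norm eliminates, since the bound on the shifted integral of $|\nabla f|^2$ is independent of the shift. The Gaussian tail is then absorbed harmlessly into the finite second moment.
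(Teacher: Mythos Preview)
Your proof is correct. The decomposition $f = u(\cdot,R^2) + (f - u(\cdot,R^2))$, the fundamental-theorem-of-calculus representation of $f(x)-f(x-y)$, and the Jensen/second-moment argument for the Gaussian are all sound; the translation invariance of the $S^2_R$-norm handles the unbounded support of $\Phi_{R^2}$ exactly as you say.

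There is no proof in the paper to compare against: the authors state this lemma without argument, citing \cite{Armstrong2016_Bounded} and \cite{Shen2017_Approximate} for both Lemma~\ref{ergodic_lem_1} and the subsequent lemma. Your argument is the natural one and is essentially what appears in those references.
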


\begin{lemma}
Assume $f\in L^1_{\rm{loc, unif}}(\mathbb{R}^d)$ and $\langle f\rangle=0$. Let $u$ be defined as \eqref{ergodic_def_u}.
Then for every $k\in\mathbb{N}^+$, $0<R, L<\infty$ and $t\geq kR^2$,
\begin{gather}
\|u(\cdot, t)\|_{L^\infty}\leq C^k\Big\{\omega_k(f; L, R)+\exp\left(-\frac{ct}{kL^2}\right)\|f\|_{S^2_R}\Big\},\label{ergodic_es_infty1}\\
\|\nabla_x u(\cdot, t)\|_{L^\infty}\leq \frac{C^k}{\sqrt{t}}\Big\{\omega_k(f; L, R)+\exp\left(-\frac{ct}{kL^2}\right)\|f\|_{S^2_R}\Big\},\label{ergodic_es_infty2}
\end{gather}
where $C$ and $c$ depend only on $d$.
\end{lemma}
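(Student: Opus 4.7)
The plan is to argue by induction on $k$, exploiting the key identity that differences commute with heat-kernel convolution,
$$\Delta_{yz}(f * \Phi_t) = (\Delta_{yz} f) * \Phi_t,$$
together with the mean-zero hypothesis $\la f\ra = 0$ and standard Gaussian moment estimates.

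For the base case $k=1$: I would first establish the elementary pointwise bound $\|g * \Phi_t\|_{L^\infty} \leq C \|g\|_{S^2_{\sqrt{t}}}$ for any $g \in L^2_{\rm{loc, unif}}(\R^d)$ and $t > 0$, by Cauchy--Schwarz together with a dyadic--annular decomposition of $\R^d$ weighted by Gaussian tails; combined with \eqref{hom_inequ_1}, this gives $\|g * \Phi_t\|_{L^\infty} \leq C \|g\|_{S^2_R}$ whenever $\sqrt{t} \geq R$. Next, for each $y \in \R^d$ pick $z(y) \in B(0, L)$ realizing (up to a factor of $2$) the infimum in the definition of $\om_1(f; L, R)$, so that
$$\bigl|u(x + y, t) - u(x + z(y), t)\bigr| = \bigl|(\Delta_{y, z(y)} f) * \Phi_t(x)\bigr| \leq C\,\om_1(f; L, R)$$
for all $x$, showing that $u(\cdot, t)$ is almost translation-invariant on scale $L$ modulo $\om_1$. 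To convert this into a bound on $u$ itself, I would exploit $\la f\ra = 0$: average $u(\cdot, t)$ against an auxiliary smooth kernel of scale $\gtrsim L$ and use that the averaged quantity equals the convolution of $f$ with a broader Gaussian whose $L^\infty$ norm is controlled by $\exp(-ct/L^2)\|f\|_{S^2_R}$ via a standard Gaussian moment calculation.

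For the inductive step, split $t = t_1 + t_2$ with $t_2 \geq R^2$ and apply the $k=1$ estimate to $u(\cdot, t) = u(\cdot, t_1) * \Phi_{t_2}$ with $u(\cdot, t_1) = f * \Phi_{t_1}$ playing the role of $f$, observing that $k$-fold differences of $u(\cdot, t_1)$ reduce to $(k{-}1)$-fold differences of $\Delta_{y,z} f$. Choosing $z = z(y)$ optimally for the outermost difference yields $\om_{k-1}(\Delta_{y,z(y)} f; L, R) \leq 2\,\om_k(f; L, R)$, and the factor $C^k$ and the $k$-dependence $\exp(-ct/(kL^2))$ arise from iterating the base case $k$ times with the total time $t$ distributed into $k$ pieces of size $\sim t/k$. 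The gradient estimate \eqref{ergodic_es_infty2} then follows immediately from $\nabla_x u(\cdot, t) = u(\cdot, t/2) * \nabla_x \Phi_{t/2}$, Young's inequality with $\|\nabla_x \Phi_{t/2}\|_{L^1(\R^d)} \leq Ct^{-1/2}$, and applying \eqref{ergodic_es_infty1} at time $t/2$.

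The main obstacle will be the base case: extracting the $\exp(-ct/L^2)$ decay from the mean-zero hypothesis, which demands a delicate balance between the Gaussian tails of $\Phi_t$ and the width $L$ of the averaging measure. A secondary subtlety is tracking the deterioration of the multiplicative constant to $C^k$ through the induction while keeping $c$ uniform in $k$, so that the final exponent reads $ct/(kL^2)$ rather than $c^k t / L^2$; this forces one to choose the time splitting $t/k$ per inductive step and to absorb all purely dimensional losses into $C$.
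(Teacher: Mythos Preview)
The paper does not supply its own proof of this lemma: it is stated with the attribution ``The following two lemmas were obtained in \cite{Armstrong2016_Bounded} and \cite{Shen2017_Approximate}'' and no further argument is given. Your outline is consistent with the proofs in those references: the induction on $k$, the identity $\Delta_{yz}(f*\Phi_t)=(\Delta_{yz}f)*\Phi_t$, the splitting of the total time into $k$ pieces to generate the factor $C^k$ and the exponent $ct/(kL^2)$, and the deduction of \eqref{ergodic_es_infty2} from \eqref{ergodic_es_infty1} via $\nabla_x\Phi_{t/2}$ are exactly the ingredients used there. So there is nothing to compare against within this paper itself; your sketch matches the cited sources in structure and in the identification of the base case (mean-zero plus Gaussian moments) as the crux.
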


\begin{lemma}\label{ergodic_lem_u_infty}
Assume $f\in H^l_{loc}(\mathbb{R}^d)$ for some $l\in \mathbb{N}^+$ such that $\nabla^i f\in L^2_{\rm{loc, unif}}(\mathbb{R}^d)$, $i=0, 1, \cdots, l$, and $\langle f\rangle=0$. Let $n=\lceil\frac{l}{2}\rceil$, i.e. the smallest integer larger than $\frac{l}{2}$, and let $u$ be defined as \eqref{ergodic_def_u}. Then for every $k\in\mathbb{N}^+$ and $T>1$,
\begin{align}
\|u(\cdot, 1)\|_{L^\infty}\leq &C\sum_{i=0}^{n-1}T^{2i}\inf_{1\leq L\leq T}\Big\{\omega_k(\nabla^{2i}f; L, T)+\exp\left(-\frac{cT^2}{L^2}\right)\|\nabla^{2i}f\|_{S^2_T}\Big\}\nonumber\\
&\quad+C\int_1^T t^{l-1}\inf_{1\leq L\leq t}\Big\{\omega_k(\nabla^l f; L, t)+\exp\left(-\frac{ct^2}{L^2}\right)\|\nabla^l f\|_{S^2_t}\Big\},\label{ergodic_es_u_infty}
\end{align}
where $C$ and $c$ depend only on $d$ and $k$.
\end{lemma}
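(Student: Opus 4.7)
The approach hinges on the identity $\partial_s u = \Delta_x u$, which lets iterated time derivatives of the heat extension $u(x,s)=(f*\Phi_s)(x)$ be expressed as iterated spatial Laplacians. I would Taylor-expand $u(\cdot,1)$ in $s$ around the large time $s_0=kT^2$ to order $n$, obtaining
\[
u(x,1) = \sum_{j=0}^{n-1}\frac{(1-kT^2)^j}{j!}\,\Delta^j u(x,kT^2) + \frac{(-1)^n}{(n-1)!}\int_1^{kT^2}(s-1)^{n-1}\,\Delta^n u(x,s)\,ds.
\]
For each boundary term with $0\leq j\leq n-1$, $\Delta^j u(\cdot,kT^2)=(\Delta^j f)*\Phi_{kT^2}$ is the heat extension of a function pointwise dominated by $|\nabla^{2j}f|$, so inequality \eqref{ergodic_es_infty1} applied to $\Delta^j f$ with heat time $kT^2$ and radius $R=T$ (satisfying $t=kR^2$), together with $|1-kT^2|^j\leq C_k T^{2j}$, produces exactly the finite sum on the right-hand side of the target estimate after taking the infimum over $L\in[1,T]$.

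The remainder needs a parity-dependent argument, and this is the main obstacle. When $l=2n$ is even, $\Delta^n u(\cdot,s)=(\Delta^n f)*\Phi_s$ is bounded directly by applying \eqref{ergodic_es_infty1} to $\Delta^n f$ at scale $R=\sqrt{s/k}$, using $\|\Delta^n f\|_{S^2_R}\leq C\|\nabla^l f\|_{S^2_R}$ and the analogous pointwise domination inside $\omega_k$. When $l=2n-1$ is odd, $\Delta^n f$ involves $l+1$ derivatives --- one more than the hypothesis supplies --- so I would instead write $\Delta^n u(\cdot,s) = \sum_i\partial_{x_i}\bigl[(\partial_i\Delta^{n-1}f)*\Phi_s\bigr]$, viewing $\Delta^n u$ as a spatial gradient of the heat extension of the order-$l$ function $\partial_i\Delta^{n-1}f$ (a component of $\nabla^l f$), and invoke the gradient estimate \eqref{ergodic_es_infty2} at the same scale $R=\sqrt{s/k}$. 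This produces the analogous bound with an additional $s^{-1/2}$ factor.

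Finally, substituting $s=kt^2$ in the remainder integral, the combined weight $(s-1)^{n-1}\cdot s^{(l-2n)/2}\cdot 2kt$ collapses to $C_k\,t^{l-1}$ in both parities, yielding exactly the target integral over $t\in[1/\sqrt{k},T]$; the portion $t\in[1/\sqrt{k},1]$ contributes a bounded amount absorbed into the constant via \eqref{hom_inequ_1}. The essential technical check is the verification that the missing derivative in the odd case is precisely compensated by the $s^{-1/2}$ factor from \eqref{ergodic_es_infty2}, so that the two parities unify to the common weight $t^{l-1}$ matching the target. Ancillary issues --- preservation of the mean-zero hypothesis when passing to derivatives (trivial for $\omega_k$ since differences annihilate constants), and integrability of the remainder integrand as $s\to\infty$ via parabolic smoothing of $\Delta^n u$ --- are routine.
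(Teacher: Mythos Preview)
Your argument is correct and follows essentially the same route as the paper: the paper also uses $\partial_s u=\Delta_x u$ to write $\|u(\cdot,1)\|_{L^\infty}\le \sum_{i=0}^{n-1}T^{2i}\|\nabla_x^{2i}u(\cdot,T^2)\|_{L^\infty}+\int_1^{T^2}s^{n-1}\|\nabla_x^{2n}u(\cdot,s)\|_{L^\infty}\,ds$ (your Taylor expansion with absolute values taken), then applies \eqref{ergodic_es_infty1} to the boundary terms and splits the remainder by parity exactly as you do, invoking \eqref{ergodic_es_infty2} in the odd case to recover the missing $s^{-1/2}$. The only cosmetic difference is that the paper expands at $T^2$ rather than $kT^2$ and absorbs the constraint $t\ge kR^2$ into the constant $c$, which avoids your small bookkeeping over $t\in[1/\sqrt{k},1]$.
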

\begin{proof} Since $\partial_t u=\Delta_x u$,
by iteration we may deduce that
\begin{align}
\|u(\cdot, 1)\|_{L^\infty}&\leq \sum_{i=0}^{n-1}T^{2i}\|\partial_s^iu(\cdot, T^2)\|_{L^\infty}+\int_1^{T^2}s^{n-1}\|\partial_s^nu(\cdot, s)\|_{L^\infty} ds\nonumber\\
&\leq \sum_{i=0}^{n-1}T^{2i}\|\nabla_x^{2i}u(\cdot, T^2)\|_{L^\infty}+\int_1^{T^2}s^{n-1}\|\nabla_x^{2n}u(\cdot, s)\|_{L^\infty} ds.\label{ergodic_es_iteration}
\end{align}
  Note that $\langle\nabla^i f\rangle=0$ for $i=0, 1, \cdots, l$. For $i\leq n-1$, by \eqref{ergodic_es_infty1} with $T=cR$,
\begin{align}
\|\nabla_x^{2i}u(\cdot, T^2)\|_{L^\infty} \leq C\inf_{1\leq L\leq T}\Big\{\omega_k(\nabla^{2i}f; L, T)+\exp\left(-\frac{cT^2}{L^2}\right)\|\nabla^{2i}f\|_{S^2_T}\Big\}. \label{ergodic_es_2i}
\end{align}
To estimate $\|\nabla_x^{2n}u(\cdot, s)\|_{L^\infty}$, we divide the analysis into two cases. If $n=\frac{l}{2}$, we apply \eqref{ergodic_es_infty1} with $R=c\sqrt{s}$ to obtain directly
\begin{align*}
\|\nabla_x^{2n}u(\cdot, s)\|_{L^\infty}\leq C\inf_{1\leq L\leq \sqrt{s}}\Big\{\omega_k(\nabla^lf; L, \sqrt{s})+\exp\left(-\frac{cs}{L^2}\right)\|\nabla^lf\|_{S^2_{\sqrt{s}}}\Big\}.
\end{align*}
If $n=\frac{l+1}{2}$, we use the equality $$\nabla_x^{2n}u(\cdot, s)=\nabla_x(\nabla_x^lf*\Phi_t)$$ and \eqref{ergodic_es_infty2} with $R=c\sqrt{s}$ to obtain
\begin{align*}
\|\nabla_x^{2n}u(\cdot, s)\|_{L^\infty}\leq \frac{C}{\sqrt{s}}\inf_{1\leq L\leq \sqrt{s}}\Big\{\omega_k(\nabla^lf; L, \sqrt{s})+\exp\left(-\frac{cs}{L^2}\right)\|\nabla^lf\|_{S^2_{\sqrt{s}}}\Big\}.
\end{align*}
As a result, by setting $t=\sqrt{s}$,
\begin{align*}
\int_1^{T^2}s^{n-1}\|\nabla_x^{2n}u(\cdot, s)\|_{L^\infty} ds\leq C\int_1^T t^{l-1}\inf_{1\leq L\leq t}\Big\{\omega_k(\nabla^l f; L, t)+\exp\left(-\frac{ct^2}{L^2}\right)\|\nabla^l f\|_{S^2_t}\Big\},
\end{align*}
which, together with \eqref{ergodic_es_iteration} and \eqref{ergodic_es_2i}, gives \eqref{ergodic_es_u_infty}.
\end{proof}

\begin{theorem}\label{ergodic_thm_ergodic}
Let $f\in H^l_{loc}(\mathbb{R}^d)$ such that $\nabla^i f\in L^2_{\rm{loc, unif}}(\mathbb{R}^d)$, $i=0, 1, \cdots, l$, and $\langle f\rangle=0$. Then for any $k\geq1$ and $T\geq 2$,
\begin{align}
\|f\|_{S^2_1}&\leq C\sum_{i=0}^{l-1}T^i\inf_{1\leq L\leq T}\Big\{\omega_k(\nabla^i f; L, T)+\exp\left(-\frac{cT^2}{L^2}\right)\|\nabla^i f\|_{S^2_T}\Big\}\nonumber\\
&\quad+C\int_1^T t^{l-1}\inf_{1\leq L\leq t}\Big\{\omega_k(\nabla^l f; L, t)+\exp\left(-\frac{ct^2}{L^2}\right)\|\nabla^l f\|_{S^2_t}\Big\}dt,\label{ergodic_ineq_ergodic}
\end{align}
where $C$ depends only on $d, k$ and $l$, and $c$ depends only on $d$ and $k$.
\end{theorem}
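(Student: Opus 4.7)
The plan is to combine iterated applications of Lemma~\ref{ergodic_lem_1}, which trades one factor of $\|f\|_{S^2_1}$ for one factor of $\|\nabla f\|_{S^2_1}$ plus a heat-smoothed sup-norm, with Lemma~\ref{ergodic_lem_u_infty}, which controls each such sup-norm in terms of quantities of the exact form appearing on the right of \eqref{ergodic_ineq_ergodic}. Iterating the former $l$ times and then invoking the latter on each of the $l$ resulting pieces, together with careful bookkeeping of powers of $T$ and of the integration weights $t^{l-i-1}$, will recover the full target bound (modulo a residual term to be absorbed at the end).

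Concretely, $l$ successive applications of Lemma~\ref{ergodic_lem_1} with $R=1$---legitimate because $\nabla^i f\in L^2_{\rm{loc, unif}}(\R^d)$ for every $0\leq i\leq l$---give
$$\|f\|_{S^2_1}\leq \sum_{i=0}^{l-1}C^{i+1}\big\|(\nabla^i f)*\Phi_1\big\|_{L^\infty}+ C^l\|\nabla^l f\|_{S^2_1}.$$
For each $0\leq i\leq l-1$ I then apply Lemma~\ref{ergodic_lem_u_infty} to $g:=\nabla^i f$ with $l$ replaced by $l-i$. The hypothesis $\la g\ra=0$ propagates from $\la f\ra=0$: by the divergence theorem and the uniform local $L^2$ bound, $\fint_{B(0,R)}\nabla^i f=O(R^{-1})$ as $R\to\infty$. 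The lemma then bounds each $\|(\nabla^i f)*\Phi_1\|_{L^\infty}$ by a sum of $T^{2j}$-weighted quantities involving $\nabla^{2j+i}f$ (for $j=0,\ldots,\lceil(l-i)/2\rceil-1$), plus an integral $\int_1^T t^{l-i-1}\inf_L\{\om_k(\nabla^l f;L,t)+e^{-ct^2/L^2}\|\nabla^l f\|_{S^2_t}\}dt$ involving only $\nabla^l f$.

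The bookkeeping step consists of summing the first-type contributions over $i$. Every index $p=2j+i\in\{0,1,\ldots,l-1\}$ is produced with coefficient $T^{2j}=T^{p-i}$, which is bounded by $T^p$ since $T\geq 2$ and $0\leq i\leq p$; summing across $i$ thus reconstructs $C\sum_{p=0}^{l-1}T^p\inf_L\{\om_k(\nabla^p f;L,T)+e^{-cT^2/L^2}\|\nabla^p f\|_{S^2_T}\}$, the first term on the right of \eqref{ergodic_ineq_ergodic}. The several integral tails combine via $t^{l-i-1}\leq t^{l-1}$ for $t\geq 1$ into a single integral with weight $t^{l-1}$, matching the second term.

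The main obstacle is to absorb the residual $C^l\|\nabla^l f\|_{S^2_1}$ left over from the first iteration into the right-hand side of \eqref{ergodic_ineq_ergodic}; this is delicate because \eqref{hom_inequ_1} compares $\|\cdot\|_{S^2_R}$-norms in only one direction. My approach is to exploit a lower bound on the integrand at the left endpoint $t=1$: since only $L=1$ is admissible there, the integrand equals $\om_k(\nabla^l f;1,1)+e^{-c}\|\nabla^l f\|_{S^2_1}\geq e^{-c}\|\nabla^l f\|_{S^2_1}$, and a short semicontinuity/monotonicity argument on a right-neighborhood of $t=1$ should then yield a lower bound of the form $c\|\nabla^l f\|_{S^2_1}$ for the full integral, into which the residual can be absorbed after enlarging the overall constant~$C$.
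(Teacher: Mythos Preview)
Your approach is essentially identical to the paper's: iterate Lemma~\ref{ergodic_lem_1} $l$ times, apply Lemma~\ref{ergodic_lem_u_infty} to each $\nabla^i f$ with $l$ replaced by $l-i$, and do the same bookkeeping on the indices $p=2j+i\leq l-1$ and weights $t^{l-i-1}\leq t^{l-1}$. For the residual $C^l\|\nabla^l f\|_{S^2_1}$, rather than invoking a vague semicontinuity argument at $t=1$, the paper (and you should) simply bound it by the integral over $[1,2]$ directly: for every $t\in[1,2]$ and $1\leq L\leq t$ one has $\exp(-ct^2/L^2)\|\nabla^l f\|_{S^2_t}\geq e^{-4c}\,2^{-d/2}\|\nabla^l f\|_{S^2_1}$, so the integrand on $[1,2]$ is uniformly bounded below by a dimensional constant times $\|\nabla^l f\|_{S^2_1}$.
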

\begin{proof}
Applying Lemma \ref{ergodic_lem_1} repeatedly, we have
\begin{align*}
\|f\|_{S^2_1}\leq C\sum_{i=0}^{l-1}\|\nabla^iu(\cdot, 1)\|_{L^\infty}+C\|\nabla^l f\|_{S_1^2},
\end{align*}
where $C$ depends only on $d$ and $l$. In view of the fact $$\nabla^i u(\cdot, 1)=\nabla^if*\Phi_1$$ with $\nabla^if\in H^{l-i}_{loc}(\mathbb{R}^d)$, it follows from Lemma \ref{ergodic_lem_u_infty} that
\begin{align}
\sum_{i=0}^{l-1}\|\nabla^iu(\cdot, 1)\|_{L^\infty}&\leq C\sum_{i=0}^{l-1}\sum_{j=0}^{\lceil\frac{l-i}{2}\rceil-1}T^{2j}\inf_{1\leq L\leq T}\Big\{\omega_k(\nabla^{2j}\nabla^if; L, T)+\exp\left(-\frac{cT^2}{L^2}\right)\|\nabla^{2j}\nabla^if\|_{S^2_T}\Big\}\nonumber\\
&\quad+Cl\int_1^T t^{l-1}\inf_{1\leq L\leq t}\Big\{\omega_k(\nabla^l f; L, t)+\exp\left(-\frac{ct^2}{L^2}\right)\|\nabla^l f\|_{S^2_t}\Big\}dt,\nonumber
\end{align}
where $C$ and $c$ depend only on $d$ and $k$. For $j\leq \lceil\frac{l-i}{2}\rceil-1$,
\begin{align*}
2j+i\leq l-i+1-2+i=l-1.
\end{align*}
Thus,
\begin{align}
\sum_{i=0}^{l-1}\|\nabla^iu(\cdot, 1)\|_{L^\infty}&\leq C\sum_{j=0}^{l-1}T^j\inf_{1\leq L\leq T}\Big\{\omega_k(\nabla^j f; L, T)+\exp\left(-\frac{cT^2}{L^2}\right)\|\nabla^j f\|_{S^2_T}\Big\}\nonumber\\
&\quad+C\int_1^T t^{l-1}\inf_{1\leq L\leq t}\Big\{\omega_k(\nabla^l f; L, t)+\exp\left(-\frac{ct^2}{L^2}\right)\|\nabla^l f\|_{S^2_t}\Big\}dt,\label{ergodic_es_f}
\end{align}
where $C$ depends only on $d, k$ and $l$. Finally, noticing that $\|\nabla^l f\|_{S^2_1}$ is bounded by the second integral in the r.h.s. of \eqref{ergodic_es_f} over the interval $[1, 2]$, we get \eqref{ergodic_ineq_ergodic}.
\end{proof}


\section{Estimates of approximate correctors}\label{sec_estimate}
In this section we establish some principal estimates for approximate correctors by using the large-scale H\"{o}lder estimates in Section \ref{sec_holder} and the quantitative ergodic theorem in Section \ref{sec_ergodic}. With these estimates in hand, we provide the proofs of Theorems \ref{estimate_thm_1} and \ref{estimate_thm_2}.

Recall that the difference operator for $y, z\in \mathbb{R}^d$ is defined by\begin{equation*}\Delta_{yz}f(x):=f(x+y)-f(x+z).
\end{equation*}
Let $$P=P_k=\{(y_1, z_1), (y_2, z_2), \dots, (y_k, z_k)\},$$
where $(y_i, z_i)\in\mathbb{R}^d\times\mathbb{R}^d$. We define the higher-order difference for $P$ by
\begin{equation*}
  \Delta_P(f):=\Delta_{y_1z_1}\cdots\Delta_{y_kz_k}(f)
\end{equation*}
(if $k=0$, then $P=\emptyset$ and $\Delta_P(f)=f$). Observe that
\begin{align*}
\Delta_P(fg)(x)=\sum_{Q\subset P}\Delta_Q(f)(x+z_{j_1}+\cdots+z_{j_t})\cdot\Delta_{P\setminus Q}(g)(x+y_{i_1}+\cdots+y_{i_l}),
\end{align*}
where the sum is taken over all $2^k$ subsets $Q=\{(y_{i_1}, z_{i_1}), \dots, (y_{i_l}, z_{i_l})\}$ of $P$, with $P\setminus Q=\{(y_{j_1}, z_{j_1}), \dots, (y_{j_t}, z_{j_t})\}$. Here, $i_1<\cdots<i_l$, $j_1<\cdots<j_t$, and $l+t=k$. By H\"{o}lder's inequality, this implies for $\frac{1}{r}\geq\frac{1}{p}+\frac{1}{q}$,
\begin{align*}
\|\Delta_P(fg)\|_{S^r_R}\leq \sum_{Q\subset P}\|\Delta_Q(f)\|_{S^p_R}\|\Delta_{P\setminus Q}(g)\|_{S^q_R}.
\end{align*}

To estimate $\|\nabla^l\chi_T\|_{S^2_1}$,  we follow  the idea of Theorem \ref{ergodic_thm_ergodic} to figure out $\omega_k(\nabla^l\chi_T; L, R)$. To this aim, we need the following lemma, which generalizes Theorem \ref{holder_thm_holder2} and Lemma \ref{appcor_lem_SR} in terms of higher-order differences.

\begin{lemma}\label{estimate_lem_difference}
Suppose that $A\in APW^2$ satisfies \eqref{intro_cond_bdd}--\eqref{intro_cond_el} and the assumptions of Theorem \ref{holder_thm_holder2} hold. Let $k\geq 0$, $P=P_k$, and let $q^+$ be given in Lemma \ref{pre_lem_reverse}. Then for any $1\leq r\leq T$ and $\sigma\in(0, 1)$,
\begin{align}
&\quad\|\Delta_P(\nabla^m u)\|_{S^q_r}+T^{-m}\|\Delta_P(u)\|_{S^2_r}\nonumber\\
&\leq C_\sigma\left(\frac{T}{r}\right)^{\sigma}\bigg\{\sum_{|\alpha|<m}\sup_{r\leq t\leq T}t^{m-|\alpha|}\|\Delta_P(f_\alpha)\|_{S^2_t}+\sum_{|\alpha|=m}\|\Delta_P(f_\alpha)\|_{S^{q_0}_r}\bigg\}\nonumber\\
&\quad+C_\sigma\left(\frac{T}{r}\right)^{\sigma}\sum_{P=Q_0\cup Q_1\cup\cdots\cup Q_l}\|\Delta_{Q_1}A\|_{S^p_r}\cdots\|\Delta_{Q_l}A\|_{S^p_r}\label{estimate_ineq_difference1}\\
&\qquad\cdot\bigg\{\sum_{|\alpha|<m}\sup_{r\leq t\leq T}t^{m-|\alpha|}\|\Delta_{Q_0}(f_\alpha)\|_{S^2_t}+\sum_{|\alpha|=m}\|\Delta_{Q_0}(f_\alpha)\|_{S^{q_0}_r}\bigg\},\nonumber
\end{align}
and for any $r\geq T$,
\begin{align}
&\|\Delta_P(\nabla^m u)\|_{S^q_r}+T^{-m}\|\Delta_P(u)\|_{S^2_r}\leq C\sum_{|\alpha|\leq m}T^{m-|\alpha|}\|\Delta_P(f_\alpha)\|_{S^{q_0}_r}\nonumber\\
&\qquad\quad+C\!\sum_{P=Q_0\cup Q_1\cup\cdots\cup Q_l}\|\Delta_{Q_1}A\|_{S^p_r}\cdots\|\Delta_{Q_l}A\|_{S^p_r}\Big(\sum_{|\alpha|\leq m}T^{m-|\alpha|}\|\Delta_{Q_0}(f_\alpha)\|_{S^{q_0}_r}\Big),\label{estimate_ineq_difference2}
\end{align}
where $2\leq q\leq q_0\leq q^+$, $\frac{1}{q}-\frac{1}{q_0}\geq \frac{k}{p}$, $C_\sigma$ depends only on $d, m, n, k, \sigma$ and $A$, and $C$ depends on $d, m, n, k$ and $\mu$. The sums in \eqref{estimate_ineq_difference1} and \eqref{estimate_ineq_difference2} are taken over all partitions of $P=Q_0\cup Q_1\cup\cdots\cup Q_l$ with $1\leq l\leq k-1$ and $Q_j\neq\emptyset$ for $j=1, \dots, l$.
\end{lemma}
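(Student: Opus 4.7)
The plan is to argue by induction on $k=|P|$, treating \eqref{estimate_ineq_difference1} and \eqref{estimate_ineq_difference2} in parallel and using Theorem \ref{holder_thm_holder2} (when $1\leq r\leq T$) or Lemma \ref{appcor_lem_SR} (when $r\geq T$) as a black box at each stage. The base case $k=0$ is immediate: $\Delta_P$ is the identity, the partition sum on the right-hand side is empty, and the inequalities reduce exactly to the cited results.

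For the inductive step $k\geq 1$, the key observation is that for any $\vec y\in(\R^d)^k$ and sign vector $\epsilon\in\{+,-\}^k$, the shift of $u$ by $\sum_i y_i^{\epsilon_i}$ (with $y_i^{+}=y_i$ and $y_i^{-}=z_i$) solves the correspondingly translated equation. Summing these translates with the signs that define $\Delta_P$ and invoking the discrete Leibniz formula
\[
\Delta_P(A^{\al\be}D^\be u)(x)=\sum_{Q\subset P}\bigl(\text{shifted }\Delta_Q A^{\al\be}\bigr)(x)\bigl(\text{shifted }\Delta_{P\setminus Q}D^\be u\bigr)(x),
\]
I would isolate the $Q=\emptyset$ contribution on the left-hand side to produce an equation $\LL_1^{\widetilde A}v+T^{-2m}v=\sum_{|\al|\leq m}D^\al\widetilde f_\al$ for $v:=\Delta_P u$, where $\widetilde A$ is a translate of $A$ (hence belonging to $\mathcal{A}$ and satisfying \eqref{intro_cond_bdd}--\eqref{intro_cond_el} with the same constants) and the modified sources $\widetilde f_\al$ consist of $\Delta_P f_\al$ together with products $(\text{shifted }\Delta_Q A^{\al\be})(\text{shifted }\Delta_{P\setminus Q}D^\be u)$ for nonempty $Q\subset P$.

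Applying Theorem \ref{holder_thm_holder2} or Lemma \ref{appcor_lem_SR} to this equation for $v$ — uniformly valid over translates of $A$, whose constants depend only on translation-invariant data — controls the target left-hand side by $S^p_r$-norms of $\widetilde f_\al$. Translation invariance of the $S^p_r$-norm absorbs every shift, and H\"older's inequality gives, for each nonempty $Q\subset P$ and $\tfrac{1}{q_1}=\tfrac{1}{p}+\tfrac{1}{q_0}$,
\[
\|\Delta_Q A^{\al\be}\cdot\Delta_{P\setminus Q}D^\be u\|_{S^{q_1}_r}\leq\|\Delta_Q A\|_{S^p_r}\,\|\Delta_{P\setminus Q}(\nabla^m u)\|_{S^{q_0}_r}.
\]
Since $|P\setminus Q|<k$, the inductive hypothesis applied to $\Delta_{P\setminus Q}(\nabla^m u)$ at exponent $q_0$ bounds the last factor by a similar partition sum, and multiplying by $\|\Delta_Q A\|_{S^p_r}$ amalgamates the two levels into a single sum over partitions $P=Q_0\cup Q_1\cup\cdots\cup Q_l$ with $Q_1,\ldots,Q_l$ nonempty, yielding precisely the claimed formulas.

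The principal obstacle is combinatorial bookkeeping. One must simultaneously propagate the shifts (handled by translation invariance of $S^p_r$), track the chain of H\"older exponents across up to $k$ levels of recursion — which is exactly the role of the hypothesis $\tfrac{1}{q}-\tfrac{1}{q_0}\geq\tfrac{k}{p}$, keeping every intermediate exponent below $q^+$ so that Meyers' reverse H\"older inequality in Lemma \ref{pre_lem_reverse} continues to apply — and verify that the nested subset sums produced at each level collapse cleanly into the stated single sum over partitions of $P$. The case $r\geq T$ is analogous to $r\leq T$ but uses Lemma \ref{appcor_lem_SR} in place of Theorem \ref{holder_thm_holder2}, which is why the $(T/r)^\sigma$ prefactor is absent from \eqref{estimate_ineq_difference2}.
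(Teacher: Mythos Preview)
Your proposal is correct and follows exactly the approach the paper indicates: induction on $k$ using Theorem \ref{holder_thm_holder2} for $1\leq r\leq T$ and Lemma \ref{appcor_lem_SR} for $r\geq T$, with the discrete Leibniz rule generating the partition sums (the paper omits the details, referring to \cite[Lemma 8.1]{Shen2017_Approximate}). One minor point: in your H\"older step you should split at the \emph{output} exponent $q$, writing $\|\Delta_Q A\cdot\Delta_{P\setminus Q}(\nabla^m u)\|_{S^q_r}\leq\|\Delta_Q A\|_{S^p_r}\|\Delta_{P\setminus Q}(\nabla^m u)\|_{S^{q'}_r}$ with $\tfrac{1}{q'}=\tfrac{1}{q}-\tfrac{1}{p}$, and then invoke the inductive hypothesis at output exponent $q'$ and input exponent $q_0$ (which is legitimate since $\tfrac{1}{q'}-\tfrac{1}{q_0}=\tfrac{1}{q}-\tfrac{1}{q_0}-\tfrac{1}{p}\geq\tfrac{k-1}{p}\geq\tfrac{|P\setminus Q|}{p}$); as you wrote it, recursing directly at $q_0$ would violate the exponent hypothesis for $|P\setminus Q|\geq 1$.
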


\begin{proof}
The lemma is proved by an induction argument on $k$ based on Theorem \ref{holder_thm_holder2} and Lemma \ref{appcor_lem_SR}. Since the process is rather similar to the one of \cite[Lemma 8.1]{Shen2018_Approximate} for second-order elliptic systems, we omit the details for concision.
\end{proof}

Let $\rho_k(L, R)$ be defined as
\begin{align}\label{def_rho}
\rho_k(L, R)=\sup_{y_1\in \mathbb{R}^d}\inf_{|z_1|\leq L}\cdots\sup_{y_k\in \mathbb{R}^d}\inf_{|z_k|\leq L}\sum\|\Delta_{Q_1}(A)\|_{S^p_R}\cdots\|\Delta_{Q_l}(A)\|_{S^p_R},
\end{align}
where the sum is taken over all partitions of $P=Q_1\cup\cdots\cup Q_l$ with $1\leq l\leq k$, and $p$ is given by $\frac{k}{p}=\frac{1}{2}-\frac{1}{q^+}$, $q^+$ is the exponent in Lemma \ref{pre_lem_reverse}. We may assume $q^+\leq 2(k+1)$, thereby $q^+\leq p$.

\begin{coro}\label{estimate_es_om_rho}
Suppose that $A\in APW^2$ satisfies \eqref{intro_cond_bdd}--\eqref{intro_cond_el} and $T\geq 1$. Then for any $\sigma\in(0, 1)$, $k\geq 1$ and $0<L<\infty$, if $1\leq R\leq T$, we have
\begin{align*}
\sum_{l\leq m}T^{l-m}\omega_k(\nabla^l \chi_T; L, R)\leq C_\sigma\left(\frac{T}{R}\right)^\sigma\rho_k(L, R),
\end{align*}
where $C_\sigma$ depends only on $d, m, n, k, \sigma$ and $A$; if $R\geq T$, we have
\begin{align*}
\sum_{l\leq m}T^{l-m}\omega_k(\nabla^l \chi_T; L, R)\leq C\rho_k(L, R),
\end{align*}
where $C$ depend on $d, m, n, k$ and $\mu$.
\end{coro}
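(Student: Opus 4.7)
The plan is to apply Lemma \ref{estimate_lem_difference} directly to the defining equation \eqref{appcor_eq2} for $u=\chi_T=\chi_{T,l}^\ga$. Since $D^\be P_l^\ga = \delta_{\be\ga}e_l$ whenever $|\be|=m=|\ga|$, the forcing for $\chi_T$ consists of $f_\al = -A^{\al\ga}e_l$ for $|\al|=m$ and $f_\al = 0$ otherwise. Consequently, for every subset $Q\subset P$,
\begin{align*}
\Delta_Q(f_\al) = -\Delta_Q(A^{\al\ga})e_l,
\end{align*}
so the forcing differences are, up to constants, precisely the differences of the coefficient matrix. Since $q_0\leq p$, the Hölder inequality yields $\|\Delta_Q(f_\al)\|_{S^{q_0}_r}\leq C\|\Delta_Q A\|_{S^{p}_r}$. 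Under these substitutions, the right-hand sides of \eqref{estimate_ineq_difference1} and \eqref{estimate_ineq_difference2} become sums of products of $\|\Delta_{Q_j}A\|_{S^p_r}$ terms only.

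Inserting these simplifications into \eqref{estimate_ineq_difference1} yields, for $1\leq R\leq T$ and any $P=P_k$,
\begin{align*}
\|\Delta_P(\nabla^m\chi_T)\|_{S^2_R}+T^{-m}\|\Delta_P(\chi_T)\|_{S^2_R}\leq C_\sg \Bigl(\tfrac{T}{R}\Bigr)^\sg \sum \|\Delta_{Q_1'}A\|_{S^p_R}\cdots\|\Delta_{Q_{l'}'}A\|_{S^p_R},
\end{align*}
where the sum runs over all partitions of $P$ into $1\leq l'\leq k$ nonempty blocks. Here the $l'=1$ term comes from the isolated factor $\|\Delta_P(f_\al)\|_{S^{q_0}_r}$ of the lemma, while the $l'\geq 2$ terms come from its partition sum after relabeling $(Q_0,Q_1,\ldots,Q_{l-1})\mapsto (Q_1',\ldots,Q_l')$. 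Applying now the monotone operation $\sup_{y_1}\inf_{|z_1|\leq L}\cdots\sup_{y_k}\inf_{|z_k|\leq L}$ to both sides, the left-hand side becomes $\om_k(\nabla^m\chi_T;L,R)+T^{-m}\om_k(\chi_T;L,R)$ and the right-hand side matches the definition \eqref{def_rho} of $\rho_k(L,R)$ verbatim, giving the bound with the $(T/R)^\sg$ factor. The regime $R\geq T$ is handled identically using \eqref{estimate_ineq_difference2}, producing the bound without the $(T/R)^\sg$ factor.

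For the intermediate orders $0<l<m$, I would interpolate between the endpoint bounds. Writing $v=\Delta_P\chi_T$, which commutes with derivatives so that $\nabla^l v = \Delta_P\nabla^l\chi_T$, the Gagliardo–Nirenberg inequality applied to $v-P_{m-1}(v;\cdot,1)$ on unit balls (using Lemma \ref{pre_lem_poincare}(iii) to absorb the polynomial correction into $\|v\|_{L^2}$) combined with Young's inequality yields
\begin{align*}
T^{l-m}\|\nabla^l v\|_{L^2(B(x,1))}\leq C\bigl(\|\nabla^m v\|_{L^2(B(x,2))}+T^{-m}\|v\|_{L^2(B(x,2))}\bigr).
\end{align*}
Taking $\sup_x$ and then the sup-inf over $(y_i,z_i)$ shows that each $T^{l-m}\om_k(\nabla^l\chi_T;L,R)$ is controlled by the $l=0$ and $l=m$ contributions, so summing over $l\leq m$ preserves the stated bound.

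The main obstacle, conceptually, is the partition bookkeeping: one must verify that the $\|\Delta_P f_\al\|$ term sitting outside the partition sum in \eqref{estimate_ineq_difference1} together with the partitions $\{Q_0,Q_1,\ldots,Q_l\}$ with $l\geq 1$ inside the sum, after relabeling, exhaust exactly the collection of partitions into $1\leq l'\leq k$ nonempty blocks that defines $\rho_k(L,R)$. Once this combinatorial identification is in place, the remainder of the proof is a monotone application of the sup-inf operations to a pointwise bound, together with standard interpolation for the intermediate derivatives.
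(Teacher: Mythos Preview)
Your treatment of the endpoints $l=0$ and $l=m$ is exactly what the paper intends: apply Lemma~\ref{estimate_lem_difference} to \eqref{appcor_eq2} with $f_\al=-A^{\al\ga}e_l$, so that every $\|\Delta_{Q_0}(f_\al)\|_{S^{q_0}_r}$ is bounded by $\|\Delta_{Q_0}A\|_{S^p_r}$ (or a constant if $Q_0=\emptyset$), and the partition sums on the right collapse precisely to the sum defining $\rho_k(L,R)$. After taking the sup--inf, monotonicity gives each of $\om_k(\nabla^m\chi_T;L,R)$ and $T^{-m}\om_k(\chi_T;L,R)$ separately bounded by the right-hand side, hence also their sum. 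For $R\ge T$, Lemma~\ref{appcor_lem_SR} (which already controls \emph{all} $T^{l-m}\|\nabla^l u\|_{S^2_R}$) feeds into the same induction and yields the full range $0\le l\le m$ directly.

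The gap is in your interpolation step for $0<l<m$ when $1\le R<T$. The displayed inequality
\[
T^{l-m}\|\nabla^l v\|_{L^2(B(x,1))}\le C\bigl(\|\nabla^m v\|_{L^2(B(x,2))}+T^{-m}\|v\|_{L^2(B(x,2))}\bigr)
\]
is false: take $v$ to be any nonzero polynomial of degree $\le m-1$ (this is consistent with $v=\Delta_P\chi_T$ locally). Then $\nabla^m v=0$, while $\nabla^l v\neq0$, and the inequality would force $T^{l-m}\le CT^{-m}$, i.e.\ $T^l\le C$, which fails for $l\ge1$ and $T$ large. The source of the error is the polynomial correction: by Lemma~\ref{pre_lem_poincare}(iii) one only has $\|\nabla^l P_{m-1}(v;\cdot,1)\|_{L^2}\le C\|v\|_{L^2}$, not $CT^{l-m}\cdot T^{-m}\|v\|_{L^2}$, so after adding back $P_{m-1}$ the lower-order contribution is $T^{l-m}\|v\|$, not $T^{-m}\|v\|$. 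Interpolating instead on balls of radius $R$ gives $\|\nabla^l v\|_{L^2(B_R)}\le CR^{m-l}\|\nabla^m v\|_{L^2(B_R)}+CR^{-l}\|v\|_{L^2(B_R)}$, and after multiplying by $T^{l-m}$ the second term becomes $(T/R)^{l}\,T^{-m}\|v\|$, which for $l\ge1$ exceeds any $(T/R)^\sg$ with $\sg<1$.

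What does work is interpolation at scale $T$: on $B(x,T)$ one has $T^{l-m}\|\nabla^l v\|_{L^2(B_T)}\le C\|\nabla^m v\|_{L^2(B_T)}+CT^{-m}\|v\|_{L^2(B_T)}$, so the intermediate orders follow from the endpoints when $R=T$. Combined with the $R\ge T$ case (via Lemma~\ref{appcor_lem_SR}), this covers every instance in which Corollary~\ref{estimate_es_om_rho} is actually invoked later (the proof of Theorem~\ref{estimate_thm_1} uses intermediate $l$ only at $R=T$, and $l=m$ at general $R\le T$). If you want the corollary in its full stated generality for $1\le R<T$ and $0<l<m$, interpolation alone is insufficient; one must revisit the iteration behind Theorem~\ref{holder_thm_holder} and extract intermediate-derivative control directly from Caccioppoli's inequality \eqref{pre_ineq_cacci2} applied to $u_\ve-P^{\ta^k}(u_\ve)$ at each step.
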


Now we are in a position to establish some further estimates on $\|\nabla^l\chi_T\|_{S^2_1}$, $0\leq l\leq m$.
\begin{proof}[Proof of Theorem \ref{estimate_thm_1}]
 Since \eqref{estimate_es_m_chi} follows from \eqref{holder_es_chi}, it is sufficient to prove \eqref{estimate_es_chi_S1}. By applying Theorem \ref{ergodic_thm_ergodic} to $\nabla^l \chi_T$, $0\leq l\leq m-1$, we get
\begin{align}
\|\nabla^l\chi_T\|_{S^2_1}&\leq C\sum_{i=0}^{m-l-1}T^i\inf_{1\leq L\leq T}\Big\{\omega_k(\nabla^i\nabla^l\chi_T; L, T)+\exp\left(-\frac{cT^2}{L^2}\right)\|\nabla^i \nabla^l\chi_T\|_{S^2_T}\Big\}\nonumber\\&\quad+C\int_1^T t^{m-l-1}\inf_{1\leq L\leq t}\Big\{\omega_k(\nabla^{m-l}\nabla^l\chi_T; L, t)+\exp\left(-\frac{ct^2}{L^2}\right)\|\nabla^{m-l}\nabla^l\chi_T\|_{S^2_t}\Big\}dt\nonumber\\
&\leq CT^{m-l}\inf_{1\leq L\leq T}\Big\{\rho_k(L, T)+\exp\left(-\frac{cT^2}{L^2}\right)\Big\}\nonumber\\&\quad+C\int_1^T t^{m-l-1}\inf_{1\leq L\leq t}\Big\{\rho_k(L, t)+\exp\left(-\frac{ct^2}{L^2}\right)\Big\}\left(\frac{T}{t}\right)^\sigma dt,\label{estimate_es_chi_S1_proof}
\end{align}
where we have used Corollary \ref{estimate_es_om_rho}, \eqref{appcor_es_chi_SR} and \eqref{holder_es_chi}. Since the first term in the r.h.s. of \eqref{estimate_es_chi_S1_proof} is bounded by the last integral in \eqref{estimate_es_chi_S1_proof} from $T/2$ to $T$, we obtain \eqref{estimate_es_chi_S1} immediately.
\end{proof}

Under  additional conditions on $\rho_k,$ it is possible to establish estimates similar to \eqref{estimate_es_m_chi} on $\nabla^l\chi_T$ for $0\leq l\leq m-1$.
\begin{coro}\label{estimate_coro_l}
Let $0\leq l\leq m-1$. Suppose there exist some $k\geq 1$ and $\theta\geq0$ such that \eqref{estimate_condition_rho} holds. Then for any $\vartheta>\max(0, m-l-\theta)$, $T\geq 1$,
\begin{align}\label{niuchi}
\|\nabla^l\chi_T\|_{S^2_1}\leq C_\vartheta T^\vartheta,
\end{align}
where $C_\vartheta$ depends only on $d, m, n, k, l, \theta, \vartheta$ and $A$.
\end{coro}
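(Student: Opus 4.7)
The plan is to specialize the general bound \eqref{estimate_es_chi_S1} of Theorem \ref{estimate_thm_1} using the hypothesis $\rho_k(L,L)\leq CL^{-\theta}$. For $0\leq l\leq m-1$ that estimate already gives
$$\|\nabla^l\chi_T\|_{S^2_1}\leq C_\sigma\int_1^T t^{m-l-1}\inf_{1\leq L\leq t}\Big\{\rho_k(L,t)+\exp\Big(-\frac{ct^2}{L^2}\Big)\Big\}\Big(\frac{T}{t}\Big)^\sigma dt,$$
so everything reduces to bounding the integrand after transferring the hypothesis from $\rho_k(L,L)$ to $\rho_k(L,t)$.

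The first step is exactly this monotonicity transfer. The covering argument behind \eqref{hom_inequ_1} yields $\|g\|_{S^p_R}\leq C\|g\|_{S^p_r}$ for all $0<r\leq R$, so each factor $\|\Delta_{Q_j}A\|_{S^p_R}$ in the sum defining $\rho_k$ is (up to a uniform constant) decreasing in $R$. Because the nested $\sup_{y_i}\inf_{|z_i|\leq L}$ operations preserve pointwise inequalities, I obtain
$$\rho_k(L,t)\leq C_k\,\rho_k(L,L)\leq CL^{-\theta}\qquad(1\leq L\leq t\leq T).$$

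The second step is an optimized choice $L=L(t)$ inside the infimum. Fix a small $\eta>0$ and pick $a\in(0,1)$ with $a\theta>\theta-\eta$. Setting $L(t):=t^a$ gives $\rho_k(L(t),t)\leq Ct^{-a\theta}\leq Ct^{-\theta+\eta}$, while the Gaussian factor becomes $\exp(-ct^{2-2a})$, which decays faster than any polynomial in $t$ since $2-2a>0$. Hence the infimum in the integrand is at most $Ct^{-\theta+\eta}$ uniformly for $t\geq 1$, and
$$\|\nabla^l\chi_T\|_{S^2_1}\leq C_{\sigma,\eta}\,T^\sigma\int_1^T t^{m-l-1-\theta+\eta-\sigma}\,dt\leq C_{\sigma,\eta}\,T^{\max(\sigma,\;m-l-\theta+\eta)}(1+\log T),$$
where the logarithmic factor only appears at the borderline exponent.

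The final step is just parameter selection. Given $\vartheta>\max(0,m-l-\theta)$, I first choose $\eta>0$ with $m-l-\theta+\eta<\vartheta$ (possible since $\vartheta>m-l-\theta$), and then $\sigma\in(0,\vartheta)$ (possible since $\vartheta>0$). Both arguments of the maximum are then strictly below $\vartheta$, which absorbs the $\log T$ factor, giving $\|\nabla^l\chi_T\|_{S^2_1}\leq C_\vartheta T^\vartheta$ as required. The only substantive point is the monotonicity transfer in the first step; once $\rho_k(L,t)$ is controlled by $L^{-\theta}$ at all scales $t\geq L$, the remainder is the standard balancing between polynomial and stretched-exponential decay.
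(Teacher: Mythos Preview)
Your proof is correct and follows essentially the same route as the paper: specialize \eqref{estimate_es_chi_S1} by choosing $L=t^{a}$ (the paper uses $L=t^{\de}$), use the monotonicity $\rho_k(L,t)\le C_k\rho_k(L,L)$ coming from \eqref{hom_inequ_1} (which the paper uses implicitly), and then balance the resulting polynomial against the stretched exponential. Two small remarks: your unified parameter selection with $\eta$ avoids the case split $\theta>m-l-1$ versus $\theta\le m-l-1$ that the paper carries out, which is a minor streamlining; and you should note, as the paper does, that Theorem~\ref{estimate_thm_1} only covers $T\ge 2$, so the range $1\le T\le 2$ needs a one-line remark (e.g.\ via \eqref{appcor_es_chi_SR}).
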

\begin{proof}
Let $T\geq 2$. By choosing $L=t^\delta$ in \eqref{estimate_es_chi_S1} with $\delta\in(0, 1)$,
\begin{align*}
\|\nabla^l\chi_T\|_{S^2_1}&\leq C_\sigma T^\sigma\int_1^T t^{m-l-1-\sigma}\left\{t^{-\theta\delta}+\exp\left(-ct^{2(1-\delta)}\right)\right\} dt\\
&\leq C_\sigma T^\sigma\int_1^T t^{m-l-1-\sigma-\theta\delta}dt+C_{\sigma, \delta}T^\sigma,
\end{align*}
where $C_{\sigma, \delta}$ depends only on $d, m, n, k, \sigma, \delta$ and $A$.
If $\theta>m-l-1$, we can choose $\sigma>m-l-\theta$ arbitrarily, and $\delta$ close enough to $1$ depending on $\theta, \sigma, m, l$,  such that  $\theta\delta+\sigma>m-l$. We then obtain for any $\sigma>\max(0, m-l-\theta)$, $$\|\nabla^l\chi_T\|_{S^2_1}\leq C_\sigma T^\sigma.$$
If $\theta\leq m-l-1$, then $\theta\delta+\sigma<m-l$ for any $\sigma, \delta\in (0, 1)$. Direct calculations imply that,
$$\|\nabla^l\chi_T\|_{S^2_1}\leq \frac{C_\sigma}{m-l-\sigma-\theta\delta}T^{m-l-\theta\delta}+C_{\sigma, \delta}T^\sigma. $$
Since $\delta$ is arbitrary and $\sigma<m-l-\theta\delta$, we obtain for any $\vartheta>m-l-\theta\geq 1$,
$$\|\nabla^l\chi_T\|_{S^2_1}\leq C_\vartheta T^\vartheta.$$
Therefore the proof of \eqref{niuchi} is done for $T\geq2$. The estimate \eqref{niuchi} for $1\leq T\leq 2$ follows easily from the one for $T=2$, and the proof is thus completed.
\end{proof}

\begin{proof}[Proof of Theorem \ref{estimate_thm_2}]
Thanks to \eqref{estimate_es_m_chi} and Corollary \ref{estimate_coro_l}, under the assumption \eqref{estimate_condition_rho}  with $\theta>m$, we have
\begin{align}
\|\nabla^l \chi_T\|_{S^2_1}\leq C_\sigma T^\sigma,\label{estimate_es_Tsigma}
\end{align}
for any $\sigma\in(0, 1)$, $T\geq 1$ and $0\leq l\leq m$.
Set $g=\chi_T-\chi_{\widetilde{T}}$, where $T\leq \widetilde{T}\leq 2T$. By Corollary \ref{holder_coro_chi} and \eqref{estimate_es_Tsigma}, we obtain  for any $T\geq 1$ and $\sigma\in (0, 1)$,
\begin{align}
\sum_{l\leq m}T^{l-m}\|\nabla^lg\|_{S^2_1}\leq C_\sigma T^{\sigma-m}.\label{estimate_es_lg}
\end{align}
This means that, for $1\leq l\leq m$, $\{\nabla^l \chi_T\}$ is convergent with respect to the norm $\|\cdot\|_{S^2_1}$ as $T\rightarrow \infty$, and thereby $\|\nabla^l\chi_T\|_{S^2_1}\leq C$ for $T\geq 1$. Likewise, if there exists some $\vartheta>0$  such that  for any $T\geq 1$,
\begin{equation}\label{estimate_claim_g}
\|g\|_{S^2_1}\leq C_\vartheta T^{-\vartheta},
\end{equation}
then we can conclude that $\{\chi_T\}$ is also convergent with respect to the norm $\|\cdot\|_{S^2_1}$ as $T\rightarrow \infty$, and $\|\chi_T\|_{S^2_1}\leq C$. Denote the limit of $\chi_T$ as $\chi$. Since $\nabla^l \chi_T\in APW^2$ and $\|g\|_{W^2}\leq \|g\|_{S^2_1}$, we obtain that $\nabla^l\chi\in APW^2$ for each $0\leq l\leq m$, which obviously satisfies
\begin{equation*}
 \sum_{|\alpha|=|\beta|=m} D^\alpha(A^{\alpha\beta}D^\beta u)=-\sum_{|\alpha|=|\beta|=m} D^\alpha(A^{\alpha\beta}D^\beta P)~\quad\text{in}~\mathbb{R}^d.
\end{equation*}

Therefore, it remains to show \eqref{estimate_claim_g}. Let $u(x, t)=g*\Phi_t(x)$. In view of Lemma \ref{ergodic_lem_1} and \eqref{estimate_es_lg}, it is sufficient to prove \begin{align*}
\|u(\cdot, 1)\|_{L^\infty}\leq CT^{-\vartheta}.
\end{align*}
By \eqref{ergodic_es_infty1} and the fact that $g\in APW^2$, we know that $\|u(\cdot, t)\|_{L^\infty}\rightarrow 0$ as $t\rightarrow \infty$. Therefore,
\begin{align}
\|u(\cdot, 1)\|_{L^\infty}&\leq \int_1^\infty\|\partial_t u(\cdot, t)\|_{L^\infty} dt
 \leq \int_1^\infty\|\nabla_x (\nabla g*\Phi_t)\|_{L^\infty} dt\nonumber\\
&\leq Ct_0\|\nabla g\|_{S^2_1}+\int_{t_0^2}^\infty\|\nabla_x (\nabla g*\Phi_t)\|_{L^\infty} dt\nonumber\\
&\leq Ct_0T^{\sigma-1}+\int_{t_0^2}^\infty\Big\{\|\nabla_x (\nabla \chi_T*\Phi_t)\|_{L^\infty}+\|\nabla_x (\nabla \chi_{\widetilde{T}}*\Phi_t)\|_{L^\infty} \Big\} dt,\label{estimate_es_u_infty}
\end{align}
where $t_0>1$ is to be determined later and we have used estimate (\cite{Armstrong2016_Bounded})
\begin{equation*}
\|\nabla_x (\nabla g*\Phi_t)\|_{L^\infty}\leq Ct^{-1/2}\|\nabla g\|_{S^2_1} \quad\textrm{ for any } t\geq 1
\end{equation*}
and \eqref{estimate_es_lg} for the last two steps, respectively. Since $T\leq \widetilde{T}\leq 2T$ and $\|\nabla \chi_T\|_{S^2_1}\leq C$, using    \eqref{ergodic_es_infty2} and the change of variables, the second term in the r.h.s. \eqref{estimate_es_u_infty} can be bounded by
\begin{align}
C\int_{t_0}^\infty \inf_{1\leq L\leq t}\Big\{T^{m-1+\sigma}\rho_k(L, t)+\exp(-\frac{ct^2}{L^2})\Big\} dt.\label{estimate_es_boundness}
\end{align}
Now choose $L=t^\delta$ with $\delta\in(0, 1)$. If $\delta$ is large enough such that $\theta\delta>1$, then \eqref{estimate_es_boundness} may be bounded by
\begin{align*}
C\int_{t_0}^\infty \left\{T^{m-1+\sigma}t^{-\theta\delta}+\exp(-ct^{2(1-\delta)})\right\} dt\leq C_{\delta, \sigma}T^{m-1+\sigma}t_0^{1-\theta\delta},
\end{align*}
which, together with \eqref{estimate_es_u_infty},   gives
\begin{align*}
\|u(\cdot, 1)\|_{L^\infty}\leq Ct_0T^{\sigma-1}+C_{\delta, \sigma}T^{m-1+\sigma}t_0^{1-\theta\delta}.
\end{align*}
To determine $t_0$, let $t_0T^{\sigma-1}=T^{m-1+\sigma}t_0^{1-\theta\delta}$, i.e. $t_0=T^{\frac{m}{\theta\delta}}$. Then $\|u(\cdot, 1)\|_{L^\infty}\leq CT^{\frac{m}{\theta\delta}+\sigma-1}$,
where $\frac{m}{\theta\delta}+\sigma-1<0$ if $\delta$ is close to $1$ and $\sigma$ is small enough (since $\theta>m$). This completes the proof.
\end{proof}

\begin{coro}\label{estimate_coro_psi}
Suppose the assumptions of Theorem \ref{estimate_thm_2} hold. Then
\begin{align*}
\|\nabla^m \chi_T-\psi\|_{S^2_T}\leq CT^{-m},
\end{align*}
where $\psi$ is the solution of equation \eqref{hom_eq_corrector} in $\mathcal{V}^n$.
\end{coro}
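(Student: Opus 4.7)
The plan is to derive the estimate by comparing the approximate corrector $\chi_T$ to the true corrector $\chi$ produced by Theorem \ref{estimate_thm_2}. That theorem yields a weak solution $\chi$ of $\LL_1 \chi = -\LL_1 P$ in $\R^d$ with $\nabla^l \chi \in APW^2(\R^d)$ for $0 \leq l \leq m$. Combining the convergence $\nabla^l \chi_T \to \nabla^l \chi$ in $S^2_1$ (established inside the proof of Theorem \ref{estimate_thm_2}) with the $B^2$-convergence $D^\al \chi_T \to \psi^\al$ from \eqref{appcor_conver}, one identifies $\nabla^m \chi = \psi$. Hence it suffices to prove $\|\nabla^m(\chi_T - \chi)\|_{S^2_T} \leq C T^{-m}$.

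Setting $g = \chi_T - \chi$ and subtracting the equations satisfied by $\chi_T$ (i.e.\ \eqref{appcor_eq2}) and $\chi$, one finds
\begin{equation*}
\LL_1 g + T^{-2m} g = -T^{-2m}\chi \quad \text{in } \R^d.
\end{equation*}
Since both $\chi_T$ and $\chi$ (together with their derivatives up to order $m$) lie in $APW^2(\R^d) \subset L^2_{\rm{loc, unif}}(\R^d)$, the function $g$ is precisely the unique solution furnished by Proposition \ref{appcor_prop_1} with source data $f_0 = -T^{-2m}\chi$ and $f_\al = 0$ for $|\al|\ge 1$. Then I would apply Theorem \ref{holder_thm_holder2} with the choice $r = T$: the prefactor $(T/r)^\sg$ equals $1$, the single surviving sup at $t = T$ contributes $T^m \cdot T^{-2m} \|\chi\|_{S^2_T}$, and one obtains
\begin{equation*}
\|\nabla^m g\|_{S^2_T} \leq C\, T^{-m}\|\chi\|_{S^2_T}.
\end{equation*}
The proof then closes by observing $\|\chi\|_{S^2_T} \leq C \|\chi\|_{S^2_1} \leq C$, the first inequality coming from \eqref{hom_inequ_1} and the second from the uniform bound on $\|\chi\|_{S^2_1}$ delivered by Theorem \ref{estimate_thm_2}.

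No genuine obstacle is anticipated: every ingredient is already in place from the preceding sections, and the argument is essentially a clean application of the large-scale estimate of Theorem \ref{holder_thm_holder2} to the difference equation for $g$. The only minor points requiring care are the identification $\nabla^m \chi = \psi$ and the verification that $g$ qualifies as the canonical solution in Proposition \ref{appcor_prop_1}, both of which follow from the uniform $S^2_1$-bounds on $\chi_T$ and the convergence statements supplied by Theorem \ref{estimate_thm_2}.
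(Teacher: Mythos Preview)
Your proposal is correct and follows essentially the same approach as the paper. The only minor variation is that the paper first estimates $\|\nabla^m(\chi_T-\chi_{\wt T})\|_{S^2_R}$ for $\wt T\ge T$ via Lemma~\ref{appcor_lem_SR} applied to \eqref{holder_eq_delta_chi}, bounds $\|\chi_{\wt T}\|_{S^2_R}$ by Theorem~\ref{estimate_thm_2}, and then lets $\wt T\to\infty$ to recover $\psi=\nabla^m\chi$, whereas you pass to the limit first and apply the a~priori estimate directly to $g=\chi_T-\chi$; your invocation of Theorem~\ref{holder_thm_holder2} at $r=T$ collapses to exactly the estimate of Lemma~\ref{appcor_lem_SR}, so the two arguments are equivalent in substance.
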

\begin{proof}
Applying Lemma \ref{appcor_lem_SR} to \eqref{holder_eq_delta_chi}, we obtain
\begin{align}\label{estimate_difference}
\|\nabla^m(\chi_T-\chi_{\widetilde{T}})\|_{S^2_R}\leq CT^{-m}\|\chi_{\widetilde{T}}\|_{S^2_R}\leq CT^{-m}
\end{align}
for any $R\geq T$ and $\widetilde{T}\geq T$, where Theorem \ref{estimate_thm_2} was used. As shown in Theorem \ref{estimate_thm_2}, $\nabla^m \chi_{\widetilde{T}}\rightarrow \nabla^m \chi$ with respect to the norm $\|\cdot\|_{S^2_1}$ as $\widetilde{T}\rightarrow \infty$. By \eqref{appcor_conver} and the uniqueness of $\psi$ in $\mathcal{V}^n$, $\psi=\nabla^m \chi \in APW^2(\mathbb{R}^d)$. Thus  by letting $\widetilde{T}\rightarrow\infty$ in \eqref{estimate_difference},
\begin{align*}
\|\nabla^m\chi_T-\psi\|_{S^2_R}\leq CT^{-m},
\end{align*}
which gives the desired result.
\end{proof}


\section{Estimates on the dual approximate correctors}\label{sec_dual}

For $1\leq i, j\leq n$, $|\alpha|=|\beta|=m$, let $B^{\alpha\beta}_{T}=(B^{\alpha\beta}_{T, ij})$ be defined as
\begin{align*}
B^{\alpha\beta}_{T, ij}(y):=A^{\alpha\beta}_{ij}(y)+\sum_{|\gamma|=m}A^{\alpha\gamma}_{ik}(y)D^\gamma\chi_{T, kj}^\beta(y)-\widehat{A}^{\alpha\beta}_{ij}.
\end{align*}
We introduce the dual approximate correctors $\phi_T=(\phi_{T, ij}^{\alpha\beta})$ as the solution to
\begin{align}(-\Delta)^m \phi_{T, ij}^{\alpha\beta}+T^{-2m}\phi^{\alpha\beta}_{T, ij}=B^{\alpha\beta}_{T, ij}-\langle B^{\alpha\beta}_{T, ij}\rangle \quad\textrm{in } \mathbb{R}^d.\label{dual_eq_phi}
\end{align}

\begin{lemma}\label{dual_lem_SR}
Let $u$ be the weak solution to
$$(-\Delta)^{m}u+T^{-2m}u=f\quad\textrm{in}~\mathbb{R}^d,$$
given by Proposition \ref{appcor_prop_1}, where $f\in L^2_{\rm{loc, unif}}(\mathbb{R}^d)$. Then for any $0<R<\infty$,
\begin{gather*}
T^{-m}\|\nabla^m u\|_{S^2_R}+T^{-2m}\|u\|_{S^2_R}\leq C\|f\|_{S^2_R},\quad
\|\nabla^{2m}u\|_{S^2_R}\leq C\log\Big(2+\frac{T}{R}\Big)\|f\|_{S^2_R},
\end{gather*}
where $C$ depends only on $m$ and $d$.
\end{lemma}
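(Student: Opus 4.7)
The operator $(-\Delta)^m+T^{-2m}$ has constant coefficients, so the natural approach is to work with its Bessel-type fundamental solution $G_T$, defined by $\wh{G}_T(\xi)=(|\xi|^{2m}+T^{-2m})^{-1}$, and to write $u=G_T*f$. A change of variables gives $G_T(x)=T^{2m-d}\wt G(x/T)$, where $\wt G$ is the fundamental solution of $(-\Delta)^m+1$. Standard Bessel/polyharmonic analysis yields $|\nabla^k\wt G(y)|\leq C|y|^{2m-d-k}$ near $y=0$ (with at worst a logarithmic correction in critical dimensions) and $|\nabla^k\wt G(y)|\leq Ce^{-c|y|}$ at infinity. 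In particular the singularity $|y|^{2m-d-k}$ is locally integrable whenever $k<2m$ (giving $|y|^{2m-k-1}$ in polar coordinates), so $\wt G$ and $\nabla^k\wt G$, $0\leq k\leq 2m-1$, all belong to $L^1(\R^d)$, and by scaling $\|\nabla^k G_T\|_{L^1}\leq CT^{2m-k}$. Only $\nabla^{2m}\wt G$ fails to be locally integrable, carrying the scale-invariant singularity $|y|^{-d}$ at the origin.

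\textbf{First estimate.} Since $u=G_T*f$, a standard Minkowski inequality for the $S^2_R$-norm (exchange the convolution with the inner $L^2$-average, then take the supremum over the base point of the ball) yields $\|u\|_{S^2_R}\leq \|G_T\|_{L^1}\|f\|_{S^2_R}\leq CT^{2m}\|f\|_{S^2_R}$ and $\|\nabla^m u\|_{S^2_R}\leq\|\nabla^m G_T\|_{L^1}\|f\|_{S^2_R}\leq CT^m\|f\|_{S^2_R}$. Multiplying by $T^{-2m}$ and $T^{-m}$ respectively produces the first estimate for every $R>0$. (For $R\geq T$ this is also a direct consequence of Lemma \ref{appcor_lem_SR} applied with $A^{\al\be}=\de_{ij}\de^{\al\be}$ and $f_\al=0$ for $|\al|\geq 1$; the genuinely new content is the range $R<T$, which is where Lemma \ref{appcor_lem_SR} is unavailable and the Bessel kernel is used.)

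\textbf{Second estimate.} Since $\nabla^{2m}G_T\notin L^1_{\rm loc}$, the Minkowski bound fails and we must split the kernel at scale $R$. Fix $x_0\in\R^d$ and set $f_1:=f\chi_{B(x_0,2R)}$, $f_2:=f-f_1$. For any $x\in B(x_0,R)$, $(\nabla^{2m}G_T)*f_2(x)$ only involves $\nabla^{2m}G_T(z)$ with $|z|>R$, so by Minkowski,
\begin{align*}
\Big(\fint_{B(x_0,R)}|(\nabla^{2m}G_T)*f_2|^2\Big)^{1/2}\leq \|(\nabla^{2m}G_T)\chi_{\{|z|>R\}}\|_{L^1}\|f\|_{S^2_R}.
\end{align*}
By rescaling, the tail $L^1$-norm reduces to $\int_{|y|>R/T}|\nabla^{2m}\wt G(y)|dy$; splitting at $|y|=1$, the annulus $\{R/T<|y|<1\}$ contributes $\int_{R/T}^1 r^{-1}dr\leq\log(T/R)$ from the bound $|\nabla^{2m}\wt G|\leq C|y|^{-d}$, while the exterior $\{|y|>1\}$ contributes $O(1)$ by exponential decay; in total $O(\log(2+T/R))$. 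For the $f_1$ piece, the Fourier multiplier $\xi^\al/(|\xi|^{2m}+T^{-2m})$ with $|\al|=2m$ is bounded by $1$ uniformly in $T$, so $\|\nabla^{2m}(G_T*f_1)\|_{L^2(\R^d)}\leq C\|f_1\|_{L^2}=C\|f\|_{L^2(B(x_0,2R))}$; dividing by $|B(x_0,R)|^{1/2}$ and invoking \eqref{hom_inequ_1} this piece contributes $O(\|f\|_{S^2_R})$. Summing the two contributions and taking the supremum over $x_0$ yields the second estimate.

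\textbf{Main obstacle.} The heart of the argument is the tail $L^1$-estimate for $\nabla^{2m}G_T$: the precise logarithmic factor $\log(2+T/R)$ comes from the interplay between the scale-invariant singularity $|y|^{-d}$ of $\nabla^{2m}\wt G$ and the length scale $T$ of the Bessel kernel, integrated over the dyadic annulus $R/T<|y|<1$. The other ingredients (Minkowski for convolution, $L^2$-boundedness of the top-order Fourier multiplier, and pointwise bounds on $\wt G$) are routine.
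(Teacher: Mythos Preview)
Your approach is correct and essentially the same as the paper's: both use the fundamental solution $\Gamma$ of $(-\Delta)^m+1$, the pointwise bound $|\nabla^{l}\Gamma(x)|\leq C|x|^{2m-d-l}e^{-c|x|}$, and convolution/singular-integral estimates (following the second-order argument in \cite[Lemma~9.2]{Shen2017_Approximate}). The only cosmetic difference is the treatment of dimensions in which the clean pointwise bound on $\Gamma$ may acquire logarithmic corrections: the paper first restricts to $d\geq 2m$ with $d$ odd (where the bound holds as stated) and then handles the remaining cases by the method of descent (adding dummy variables), whereas you simply note that any logarithmic factor near the origin does not spoil the local integrability of $|y|^{2m-d-k}$ for $k<2m$ and is invisible to the Fourier-multiplier argument for the top-order piece.
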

\begin{proof}
The proof for the case $d\geq2m$ and $d$ is odd follows from the estimate on the fundamental solution $\Gamma(x)$ of the operator $(-\Delta)^m+1$ in $\mathbb{R}^d$ with pole at the origin (see e.g. \cite{Erdelyi1953, Boyling1996}),   $$|\nabla^l\Gamma(x)|\leq C|x|^{2m-d-l}e^{-c|x|}, \quad l\geq0,$$ and some singular integral estimates. As the analysis is almost the same as \cite[Lemma 9.2]{Shen2018_Approximate}, let us omit the details.  The proof for the other cases follows from the method of descending, i.e., introducing dummy variables and considering the equations in $\mathbb{R}^d$ with $d\geq2m$ and $d$ odd.
\end{proof}

\begin{lemma}\label{dual_lem_phi_SR}
Assume that $A\in APW^2$ and  satisfies \eqref{intro_cond_bdd}--\eqref{intro_cond_el}. Let $T>1$, $\sigma\in(0, 1)$. Then for any $0\leq l\leq 2m$, if $1\leq R\leq T$,
\begin{align}\|\nabla^l\phi_T\|_{S^2_R}\leq C_\sigma T^{2m-l}\left(\frac{T}{R}\right)^\sigma,\label{dual_es_phi_SR1}
\end{align}
where $C_\sigma$ depends only on $d, m, n, \sigma$ and $A$; if $R\geq T$,
\begin{align}\|\nabla^l\phi_T\|_{S^2_R}\leq CT^{2m-l},\label{dual_es_phi_SR2}
\end{align}
where $C$ depends only on $d, m, n, \mu$.
\end{lemma}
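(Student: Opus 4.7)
The plan is to reduce to Lemma~\ref{dual_lem_SR} by first controlling the right-hand side of \eqref{dual_eq_phi}, and then to recover intermediate derivatives by interpolation.

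\textbf{Step 1: Control of $\|B^{\alpha\beta}_T - \langle B^{\alpha\beta}_T\rangle\|_{S^2_R}$.} I would decompose
\[
B^{\alpha\beta}_{T,ij} = A^{\alpha\beta}_{ij} + A^{\alpha\gamma}_{ik}\, D^\gamma \chi^\beta_{T,kj} - \widehat{A}^{\alpha\beta}_{ij}.
\]
The first and last terms are uniformly bounded. For the middle term, the bound $\|A\|_\infty \leq \mu^{-1}$ combined with the $S^2_R$-control of $\nabla^m\chi_T$ suffices: for $1\leq R\leq T$, Corollary~\ref{holder_coro_chi} gives $\|\nabla^m\chi_T\|_{S^2_R}\leq C_\sigma (T/R)^\sigma$, while for $R\geq T$, estimate \eqref{appcor_es_chi_SR} gives $\|\nabla^m\chi_T\|_{S^2_R}\leq C$. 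Since $\nabla^m\chi_T \in APW^2(\R^d)$ by Theorem~\ref{appcor_thm_APW}, the mean $\langle B^{\alpha\beta}_T\rangle$ exists and is bounded by $\|B^{\alpha\beta}_T\|_{S^2_T}\leq C$. Combining, I obtain
\[
\|B^{\alpha\beta}_T - \langle B^{\alpha\beta}_T\rangle\|_{S^2_R} \leq \begin{cases} C_\sigma (T/R)^\sigma, & 1\leq R\leq T,\\ C, & R\geq T.\end{cases}
\]

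\textbf{Step 2: Applying Lemma~\ref{dual_lem_SR}.} With $u=\phi^{\alpha\beta}_T$ and $f=B^{\alpha\beta}_T - \langle B^{\alpha\beta}_T\rangle$, Lemma~\ref{dual_lem_SR} immediately produces the estimates at $l=0,m,2m$:
\[
T^{-m}\|\nabla^m \phi_T\|_{S^2_R} + T^{-2m}\|\phi_T\|_{S^2_R} \leq C\|f\|_{S^2_R}, \qquad \|\nabla^{2m}\phi_T\|_{S^2_R}\leq C\log(2+T/R)\|f\|_{S^2_R}.
\]
Substituting Step~1 yields the desired bounds in the form of \eqref{dual_es_phi_SR1} and \eqref{dual_es_phi_SR2} at these three values of $l$. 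The logarithmic factor in the $l=2m$ estimate is absorbed by replacing $\sigma$ with a slightly larger exponent, using $\log(2+T/R)(T/R)^\sigma \leq C_{\sigma'}(T/R)^{\sigma'}$ for any $\sigma'>\sigma$.

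\textbf{Step 3: Interpolation for intermediate $l$.} For $0 < l < 2m$ with $l\neq m$, I would apply a standard Gagliardo--Nirenberg interpolation on each ball $B(x,R)$ with $\theta = l/(2m)$:
\[
\|\nabla^l \phi_T\|_{L^2(B(x,R))} \leq C\|\nabla^{2m}\phi_T\|_{L^2(B(x,2R))}^{\theta}\|\phi_T\|_{L^2(B(x,2R))}^{1-\theta} + C R^{-l}\|\phi_T\|_{L^2(B(x,2R))},
\]
normalize by $|B(x,R)|^{1/2}$, and take the supremum over $x$. Plugging in the endpoint bounds from Step~2 at scale $2R$ gives $T^{2m-l}(T/R)^\sigma$ in the regime $1\leq R\leq T$ (shrinking $\sigma$ if necessary to handle the extra factor of $2^\sigma$), and $T^{2m-l}$ in the regime $R\geq T$. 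Alternatively, one may iterate Caccioppoli-type inequalities for the equation $(-\Delta)^m \phi_T+T^{-2m}\phi_T=f$ in the spirit of Lemma~\ref{pre_lem_cacci} on concentric balls of radii comparable to $R$, which extracts $\nabla^l$ bounds directly from the endpoint $l=0$ and $l=2m$ estimates.

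\textbf{Main obstacle.} The delicate point is preserving the precise scaling $T^{2m-l}(T/R)^\sigma$ through the interpolation: the lower-order term $R^{-l}\|\phi_T\|_{S^2_{2R}}$ produced by Gagliardo--Nirenberg is dominated by the interpolation term exactly because $\|\phi_T\|_{S^2_R}\sim T^{2m}$ forces $R^{-l}T^{2m}=T^{2m-l}(T/R)^l\cdot(T/R)^{-l+l}$ to behave correctly only when $R\leq T$; in the opposite regime $R\geq T$ the coercive term $T^{-2m}$ dominates and the scaling is immediate. Equally, one must verify that $\phi_T\in H^{2m}_{\rm loc}(\R^d)$ with $\nabla^l\phi_T\in L^2_{\rm loc,unif}(\R^d)$ for all $0\leq l\leq 2m$, which follows from Proposition~\ref{appcor_prop_1} combined with the local smoothness afforded by the equation, since the leading operator $(-\Delta)^m$ has constant coefficients.
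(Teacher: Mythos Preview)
Your Steps~1 and~2 are correct and coincide with the paper's argument: bound $\|B_T-\langle B_T\rangle\|_{S^2_R}$ via Corollary~\ref{holder_coro_chi} and \eqref{appcor_es_chi_SR}, then apply Lemma~\ref{dual_lem_SR}. The issue is Step~3.

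Your Gagliardo--Nirenberg interpolation does \emph{not} close in the regime $1\le R\le T$. The additive lower-order term it produces is
\[
R^{-l}\|\phi_T\|_{S^2_R}\;\le\;C_\sg\, R^{-l}\,T^{2m}\Big(\frac{T}{R}\Big)^\sg
\;=\;C_\sg\,T^{2m-l}\Big(\frac{T}{R}\Big)^{l+\sg},
\]
which for $l\ge 1$ and $R<T$ strictly exceeds the target $T^{2m-l}(T/R)^\sg$ with $\sg\in(0,1)$. Your ``Main obstacle'' paragraph computes exactly this quantity but then asserts it ``behaves correctly when $R\le T$'', which is the wrong conclusion: it is precisely in this regime that the term is too large. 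The suggested Caccioppoli alternative has the same defect, since the term $R^{-2m}\|\phi_T\|_{S^2_R}^2$ again carries an unwanted $(T/R)^{2m}$.

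The paper sidesteps interpolation altogether. It applies Lemma~\ref{dual_lem_SR} and records
\[
\sum_{l\le 2m}T^{-2m+l}\|\nabla^l\phi_T\|_{S^2_R}\le C_\sg\Big(\frac{T}{R}\Big)^\sg\|B_T-\langle B_T\rangle\|_{S^2_R},
\]
i.e.\ the full range $0\le l\le 2m$ at once. Although Lemma~\ref{dual_lem_SR} is stated only at $l=0,m,2m$, its proof via the fundamental solution $\Gamma$ of $(-\Delta)^m+1$ uses the pointwise bound $|\nabla^l\Gamma(x)|\le C|x|^{2m-d-l}e^{-c|x|}$, valid for every $l\ge 0$. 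For $l<2m$ the kernel $\nabla^l\Gamma_T$ is locally integrable with $\|\nabla^l\Gamma_T\|_{L^1(\R^d)}\sim T^{2m-l}$, and the same convolution estimate as in the proof of Lemma~\ref{dual_lem_SR} yields $T^{-2m+l}\|\nabla^l u\|_{S^2_R}\le C\|f\|_{S^2_R}$ directly, with no lower-order correction. This is what you should invoke in place of your Step~3.
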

\begin{proof}
Note that  by \eqref{holder_es_chi} and \eqref{appcor_es_chi_SR},
\begin{equation}\label{niu4}
\begin{cases}
\|A^{\alpha\beta}+\sum_{|\gamma|=m}A^{\alpha\gamma}D^\gamma\chi_{T}^\beta\|_{S^2_R}\leq C_\sigma\left(\frac{T}{R}\right)^\sigma  &\text{ for any }1\leq R\leq T, \sigma\in(0, 1),\\
\|A^{\alpha\beta}+\sum_{|\gamma|=m}A^{\alpha\gamma}D^\gamma\chi_{T}^\beta\|_{S^2_R}\leq C  &\text{ for any } R\geq T,
\end{cases}
\end{equation}
 where $C_\sigma$ depends only on $d, m, n, \sigma, A$, and $C$ depends only on $d, m, n, \mu$.
Applying Lemma \ref{dual_lem_SR} to equation \eqref{dual_eq_phi}, we obtain that
 \begin{equation}\label{niu5}
\begin{cases} \sum_{l\leq 2m}T^{-2m+l}\|\nabla^l\phi_T\|_{S^2_R}\leq C_\sigma\left(\frac{T}{R}\right)^\sigma\|B_T-\langle B_T\rangle\|_{S^2_R} &\text{ for } 1\leq R\leq T,\\
\sum_{l\leq 2m}T^{-2m+l}\|\nabla^l\phi_T\|_{S^2_R}\leq C\|B_T-\langle B_T\rangle\|_{S^2_R} &\text{ for } R\geq T.
 \end{cases}
\end{equation}
Combining \eqref{niu4}, \eqref{niu5} and the fact \begin{align*}\|B_T-\langle B_T\rangle\|_{S^2_R}\leq 2\|A^{\alpha\beta}+\sum_{|\gamma|=m}A^{\alpha\gamma}D^\gamma\chi_{T}^\beta\|_{S^2_R},\end{align*}we obtain \eqref{dual_es_phi_SR1} and \eqref{dual_es_phi_SR2} immediately.
\end{proof}

\begin{lemma}\label{dual_lem_phi_om}
Assume that $A\in APW^2$ and satisfies \eqref{intro_cond_bdd}--\eqref{intro_cond_el}. Let $T>1$, $k\geq 1$ and $\sigma\in(0, 1)$. Then for any $0\leq l\leq 2m$,
\begin{align}\omega_k(\nabla^l\phi_T; L, R)\leq C_\sigma T^{2m-l}\left(\frac{T}{R}\right)^\sigma\rho_k(L, R) \quad \text{ for } 1\leq R\leq T, \label{dual_es_phi_om1}
\end{align}
where $C_\sigma$ depends only on $d, m, n, \sigma$ and $A$, and
\begin{align}\omega_k(\nabla^l\phi_T; L, R)\leq CT^{2m-l}\rho_k(L, R) \quad \text{ for } R\geq T,\label{dual_es_phi_om2}
\end{align}
 where $C$ depends only on $d, m, n, \mu$.
\end{lemma}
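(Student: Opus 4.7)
The plan is to exploit the constant-coefficient structure of the dual corrector equation \eqref{dual_eq_phi} so that the higher-order difference $\Delta_P\phi_T$ satisfies the same equation with right-hand side $\Delta_P B_T$, and then to control $\Delta_P B_T$ by combining the product rule for difference operators with the $\rho_k$-type bounds for differences of $\nabla^m\chi_T$ already obtained in Lemma \ref{estimate_lem_difference} and Corollary \ref{estimate_es_om_rho}.

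Concretely, fix $P=P_k=\{(y_1,z_1),\ldots,(y_k,z_k)\}$ with $k\geq 1$. Since $(-\Delta)^m+T^{-2m}$ has constant coefficients and $\la B_T\ra$ is a constant (annihilated by $\Delta_P$), applying $\Delta_P$ to \eqref{dual_eq_phi} shows that $\Delta_P\phi_T$ is a solution of $(-\Delta)^m u+T^{-2m}u=\Delta_P B_T$ in $\R^d$ in the class of Proposition \ref{appcor_prop_1}. Lemma \ref{dual_lem_SR}, together with an interpolation between the $l=0,m$ and $l=2m$ estimates (absorbing the logarithm at the $\nabla^{2m}$-scale into $(T/R)^\sg$ for an arbitrarily small $\sg>0$), then yields for every $0\leq l\leq 2m$,
\[
\|\nabla^l\Delta_P\phi_T\|_{S^2_R}\leq C_\sg T^{2m-l}(T/R)^\sg\|\Delta_P B_T\|_{S^2_R}\qquad(1\leq R\leq T),
\]
and the same bound without the $(T/R)^\sg$ factor when $R\geq T$.

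Next I would estimate $\|\Delta_P B_T\|_{S^2_R}$. Using $\Delta_P\wh{A}=0$ and the product rule for $\Delta_P$,
\[
\Delta_P B_T^{\al\be}=\Delta_P A^{\al\be}+\sum_{|\ga|=m}\sum_{Q\subset P}\bigl(\Delta_Q A^{\al\ga}\bigr)(\cdot+\xi_Q)\bigl(\Delta_{P\setminus Q}D^\ga\chi_T^\be\bigr)(\cdot+\eta_Q),
\]
for appropriate shifts $\xi_Q,\eta_Q\in\R^d$. Splitting the subset sum into the cases $Q=\emptyset$ (bounded crudely by $\|A\|_\infty\|\Delta_P\nabla^m\chi_T\|_{S^2_R}$) and $Q\neq\emptyset$ (treated by H\"older's inequality with exponents chosen so that each $\Delta_Q A$ factor is measured in $\|\cdot\|_{S^p_R}$), one reduces every piece to a norm of $\Delta_{P'}\nabla^m\chi_T$ for some $P'\subsetneq P$ (or $P'=P$). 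Each such factor is then estimated via Lemma \ref{estimate_lem_difference} applied to $\chi_T$, noting that for the $\chi_T$-equation the data $f_\al$ are constant multiples of $A$, so the inhomogeneous terms in that lemma again produce only $\|\Delta_\cdot A\|_{S^p_R}$-type factors. Collecting everything gives a bound of the form
\[
\|\Delta_P B_T\|_{S^2_R}\leq C_\sg(T/R)^\sg\sum_{P=Q_1\cup\cdots\cup Q_l}\|\Delta_{Q_1}A\|_{S^p_R}\cdots\|\Delta_{Q_l}A\|_{S^p_R}\qquad(1\leq R\leq T),
\]
and the corresponding clean version for $R\geq T$, with the sum running over all partitions of $P$ into nonempty subsets.

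Finally, taking the iterated $\sup_{y_1}\inf_{|z_1|\leq L}\cdots\sup_{y_k}\inf_{|z_k|\leq L}$ on both sides and distributing it through the partition sum (each factor $\|\Delta_{Q_j}A\|_{S^p_R}$ depends only on the pairs $(y_i,z_i)$ with $i\in Q_j$) yields exactly $\rho_k(L,R)$ by its definition \eqref{def_rho}; combined with the Step 1 estimate this gives the two claimed inequalities. The main obstacle is the combinatorial bookkeeping in the expansion of $\Delta_P B_T$: one must verify that distributing the $k$ differences over the product $A\cdot\nabla^m\chi_T$ and then invoking the partition-sum bound of Lemma \ref{estimate_lem_difference} reassembles precisely into the partition sum defining $\rho_k$, and that the nested sup-inf passes through this sum without coupling different factors, so that no extra combinatorial loss appears and the sharpness in $L$ is preserved.
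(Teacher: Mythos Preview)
Your proposal is correct and follows essentially the same route as the paper: apply $\Delta_P$ to the constant-coefficient equation \eqref{dual_eq_phi}, use Lemma \ref{dual_lem_SR} (with the logarithm absorbed into $(T/R)^\sg$) to reduce to $\|\Delta_P B_T\|_{S^2_R}$, then expand $\Delta_P B_T$ via the product rule, split off $Q=\emptyset$ and $Q\neq\emptyset$, and invoke Lemma \ref{estimate_lem_difference} on the $\Delta_{P\setminus Q}(\nabla^m\chi_T)$ factors to reassemble the partition sum defining $\rho_k$. The paper's proof is organized identically, only writing the intermediate bound as $\om_k(B_T;L,R)\leq C_\sg(T/R)^\sg\rho_k(L,R)$ before combining it with the $\phi_T$ estimate.
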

\begin{proof}
Since the proofs of \eqref{dual_es_phi_om1} and \eqref{dual_es_phi_om2} are rather similar, we only provide the details for the one of \eqref{dual_es_phi_om1}.
Applying the operator $\Delta_P$ to equation \eqref{dual_eq_phi}, we have
\begin{align*}(-\Delta)^m \Delta_P\phi_{T}^{\alpha\beta}+T^{-2m}\Delta_P\phi^{\alpha\beta}_{T}=\Delta_P B^{\alpha\beta}_{T} \quad\textrm{ in } \mathbb{R}^d,
\end{align*}
which, in view of Lemma \ref{dual_lem_SR}, implies that, for $1\leq R\leq T$ and $\sigma\in(0, 1)$,
\begin{align*}\sum_{l\leq 2m}T^{-2m+l}\|\Delta_P\nabla^l\phi_T\|_{S^2_R}\leq C_\sigma\left(\frac{T}{R}\right)^\sigma\|\Delta_P B_T\|_{S^2_R},
\end{align*}
 where $C_\sigma$ depends only on $d, m, n, \sigma$ and $A$. As a result, for any $1\leq R\leq T$,
\begin{align}\sum_{l\leq 2m}T^{-2m+l}\omega_k(\nabla^l\phi_T; L, R)\leq C_\sigma\left(\frac{T}{R}\right)^\sigma\omega_k(B_T; L, R).\label{dual_es_om_phi}
\end{align}
It remains to calculate $\omega_k(B_T; L, R)$. Let $p, q$ satisfy $\frac{k}{p}=\frac{1}{2}-\frac{1}{q^+}$ and $\frac{k-1}{p}+\frac{1}{q}=\frac{1}{2}$. Noticing that
$$\Delta_P B_T=\Delta_P A+\sum_{Q\subset P}\Delta_P A\cdot\Delta_P(\nabla^m\chi_T),$$
it follows from H\"{o}lder's inequality that
\begin{align*}
\|\Delta_P B_T\|_{S^2_R}\leq \|\Delta_P A\|_{S^2_R}+\|A\|_{L^\infty}\|\Delta_P(\nabla^m \chi_T)\|_{S^2_R}+\sum_{Q\subset P, Q\neq \emptyset}\|\Delta_Q A\|_{S^p_R}\|\Delta_{P\setminus Q}(\nabla^m\chi_T)\|_{S^q_R},
\end{align*}
from which and  Lemma \ref{estimate_lem_difference}, we obtain
\begin{align*}
\|\Delta_P B_T\|_{S^2_R}\leq C_\sigma\left(\frac{T}{R}\right)^\sigma\sum_{Q_1\cup\cdots\cup Q_l=P}\|\Delta_{Q_1} A\|_{S^p_R}\cdots\|\Delta_{Q_l} A\|_{S^p_R}.
\end{align*}
 This yields, for any $1\leq R\leq T$ and $\sigma\in(0, 1)$,
\begin{align}
\omega_k(B_T; L, R)\leq C_\sigma\left(\frac{T}{R}\right)^\sigma\rho_k(L, R). \label{dual_es_om_BT}
\end{align}
Combining \eqref{dual_es_om_phi} and \eqref{dual_es_om_BT}, we get \eqref{dual_es_phi_om1} and complete the proof.
\end{proof}

Following the arguments in Section \ref{sec_estimate}, we may obtain some further estimates on $\phi_T$.
\begin{theorem}\label{dual_thm_1}
Assume that $A\in APW^2$ and satisfies \eqref{intro_cond_bdd}--\eqref{intro_cond_el}. Let $k\geq 1$, $\sigma\in(0, 1)$. Then for any $T\geq 2$ and $0\leq l<2m$,
\begin{align}
\|\nabla^l\phi_T\|_{S^2_1}\leq C_\sigma\int_1^T t^{2m-l-1}\inf_{1\leq L\leq t}\Big\{\rho_k(L, t)+\exp\left(-\frac{ct^2}{L^2}\right)\Big\}\left(\frac{T}{t}\right)^\sigma dt,\label{dual_es_phi_S11}
\end{align}
where $C_\sigma$ depends only on $d, m, n, k, \sigma$ and $A$, and $c$ depends only on $d$ and $k$. Furthermore, if condition \eqref{estimate_condition_rho} is satisfied for some $k\geq1$ and $\theta>m$, then for any $T\geq 1$,
\begin{gather}\label{dual_es_phi_S12}
\begin{split}
&\|\nabla^l\phi_T\|_{S^2_1}\leq C   \quad\quad \textrm{ if }  m\leq l\leq 2m,\\
&\|\nabla^l\phi_T\|_{S^2_1}\leq CT^{m-l}  \quad\quad\textrm{ if } l<m.
\end{split}
\end{gather}
\end{theorem}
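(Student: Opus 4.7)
The plan is to mirror, for $\phi_T$, the argument used for $\chi_T$ in Theorem \ref{estimate_thm_1} and Corollary \ref{estimate_coro_l}. I first note that $\la\phi_T\ra = 0$: averaging \eqref{dual_eq_phi} gives $T^{-2m}\la\phi_T\ra = 0$, since the right-hand side is mean-zero by construction and $\la(-\Delta)^m\phi_T\ra = 0$ (derivatives of $APW^2$ functions have vanishing mean, and a straightforward analogue of Theorem \ref{appcor_thm_APW} applied to the operator $(-\Delta)^m + T^{-2m}$ with $APW^2$ right-hand side $B_T - \la B_T\ra$ places $\phi_T$ and all its derivatives up to order $2m$ in $APW^2(\R^d)$). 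With $\la\nabla^l\phi_T\ra = 0$ verified, the quantitative ergodic theorem is applicable.

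To prove \eqref{dual_es_phi_S11}, I apply Theorem \ref{ergodic_thm_ergodic} to $\nabla^l\phi_T$ with top-derivative order $2m - l$. This produces a discrete sum over $i = 0, \ldots, 2m - l - 1$ involving $\om_k(\nabla^{l+i}\phi_T; L, T)$ and $\|\nabla^{l+i}\phi_T\|_{S^2_T}$, plus an integral on $[1,T]$ controlled by $\om_k(\nabla^{2m}\phi_T; L, t)$ and $\|\nabla^{2m}\phi_T\|_{S^2_t}$. Substituting Lemmas \ref{dual_lem_phi_SR} and \ref{dual_lem_phi_om} at $R = T$, each summand in the discrete part is bounded by $C_\sg T^{2m-l}\inf_L\{\rho_k(L,T) + \exp(-cT^2/L^2)\}$, which is absorbed into the desired integral restricted to $[T/2, T]$, exactly as in the passage from \eqref{estimate_es_chi_S1_proof} to \eqref{estimate_es_chi_S1}. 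Inserting the same lemmas at scale $R = t$ into the integral term produces the integrand $t^{2m-l-1}\inf_L\{\rho_k(L,t) + \exp(-ct^2/L^2)\}(T/t)^\sg$ of \eqref{dual_es_phi_S11}.

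For \eqref{dual_es_phi_S12}, I specialize \eqref{dual_es_phi_S11} under $\rho_k(L,L) \leq CL^{-\theta}$ with $\theta > m$, choosing $L = t^\de$, $\de \in (0,1)$ close to $1$, so that $\rho_k(L,t) \leq Ct^{-\theta\de}$ and the exponential contribution is integrable, exactly as in Corollary \ref{estimate_coro_l}. When $l < m$ the integrand grows and direct integration, with $\sg$ small and $\theta\de > m + \sg$, yields $\|\nabla^l\phi_T\|_{S^2_1} \leq CT^{2m-l-\theta\de+\sg} \leq CT^{m-l}$. When $m \leq l \leq 2m$ the integrand is integrable to infinity and the best direct bound is only $\|\nabla^l\phi_T\|_{S^2_1} \leq C_\sg T^\sg$. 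To remove the $T^\sg$ loss I run the Cauchy-sequence argument used in Theorem \ref{estimate_thm_2}: for $T \leq \wt T \leq 2T$, the difference $g_T := \phi_T - \phi_{\wt T}$ satisfies
\begin{align*}
(-\Delta)^m g_T + T^{-2m} g_T = (B_T - B_{\wt T}) - \la B_T - B_{\wt T}\ra + (\wt T^{-2m} - T^{-2m})\phi_{\wt T},
\end{align*}
whose right-hand side decays in $S^2_R$ thanks to the decay of $\nabla^m\chi_T - \nabla^m\chi_{\wt T}$ established in Theorem \ref{estimate_thm_2} (and Corollary \ref{estimate_coro_psi}), combined with the $l < m$ bound on $\phi_{\wt T}$ already proved. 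Lemma \ref{dual_lem_SR} applied to $g_T$ then gives a summable decay of $\|\nabla^l g_T\|_{S^2_1}$ across dyadic scales, forcing $\{\nabla^l\phi_T\}$ to be Cauchy in $S^2_1$ as $T \to \infty$ and recovering the uniform bound. The case $l = 2m$ is absorbed into the same scheme, the $\log(2+T)$ factor from Lemma \ref{dual_lem_SR} being negligible against the power decay of the right-hand side of the equation for $g_T$.

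The main obstacle will be making the Cauchy step quantitative in the range $m \leq l \leq 2m$: one needs a precise decay rate for $\|B_T - B_{\wt T}\|_{S^2_R}$ uniform over the relevant scales $R \in [1, T]$, together with control of the zeroth-order perturbation $(\wt T^{-2m} - T^{-2m})\phi_{\wt T}$, so that the combined right-hand side, when fed back through Lemma \ref{dual_lem_SR}, produces a geometrically decaying bound on $\|\nabla^l g_T\|_{S^2_1}$. Once this is in place, the argument closes along the template already laid down in the proofs of Theorems \ref{estimate_thm_1} and \ref{estimate_thm_2}.
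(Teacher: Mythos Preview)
Your treatment of \eqref{dual_es_phi_S11} is correct and matches the paper: apply Theorem \ref{ergodic_thm_ergodic} to $\nabla^l\phi_T$ with top order $2m-l$, feed in Lemmas \ref{dual_lem_phi_SR} and \ref{dual_lem_phi_om}, and absorb the discrete summands into the tail $[T/2,T]$ of the integral. Your preliminary remark that $\la\phi_T\ra=0$ and $\phi_T\in APW^2(\R^d)$ is also correct and is needed (the paper leaves this implicit).

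For \eqref{dual_es_phi_S12} in the range $l<m$, your direct integration with $L=t^\de$ is valid and in fact slightly more economical than the paper, which runs the Cauchy argument uniformly over all $l$ and then telescopes from $T\sim 1$.

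There is, however, a genuine gap in your Cauchy step for $m\leq l\leq 2m$. You assert that ``Lemma \ref{dual_lem_SR} applied to $g_T$ then gives a summable decay of $\|\nabla^l g_T\|_{S^2_1}$ across dyadic scales'', but this is false precisely at $l=m$. The right-hand side of the equation for $g_T$ is bounded in $S^2_1$ by $C_\sg T^{\sg-m}$ (using \eqref{estimate_es_lg} for $B_T-B_{\wt T}$ and your own $l<m$ bound for the $\phi_{\wt T}$ term), so Lemma \ref{dual_lem_SR} yields only $\|\nabla^l g_T\|_{S^2_1}\leq C_\sg T^{m-l+\sg}$; at $l=m$ this is $T^\sg$, which does not sum over dyadic $T$. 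Your final paragraph suggests the remaining difficulty is sharpening the $S^2_R$ control on the right-hand side, but that is not the issue: even a bound of $CT^{-m}$ on the right-hand side would still give $\|\nabla^m g_T\|_{S^2_1}\leq C$, which is not summable. What the paper actually does is record the preliminary bound $\|\nabla^l g_T\|_{S^2_1}\leq C_\sg T^{m-l+\sg}$ and then \emph{rerun the heat-kernel argument used to prove \eqref{estimate_claim_g}} on $g_T$ itself to improve it to $\|\nabla^l g_T\|_{S^2_1}\leq C_\sg T^{m-l-\sg}$ for $l<2m$; only then is the dyadic sum convergent. Your reference to ``the template of Theorems \ref{estimate_thm_1} and \ref{estimate_thm_2}'' does ultimately contain this ingredient, but you have misidentified which part of that template is doing the work.
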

\begin{proof}
We give a sketch of the proof here and refer readers to Section \ref{sec_estimate} for more details. Similar to Theorem \ref{estimate_thm_1}, \eqref{dual_es_phi_S11} follows from Theorem \ref{ergodic_thm_ergodic} and Lemmas \ref{dual_lem_phi_SR}, \ref{dual_lem_phi_om}.

To prove \eqref{dual_es_phi_S12}, we note that by \eqref{dual_es_phi_S11} and direct computations,
$$\|\nabla^l\phi_T\|_{S^2_1}\leq C_\vartheta T^\vartheta \quad\text{ for any } \vartheta>\max\{0, 2m-l-\theta\}.$$
Let $g:=\phi_T-\phi_{\widetilde{T}}$ with $T\leq \widetilde{T}\leq 2T$. Then applying Lemma \ref{dual_lem_SR} to the equation of $g$ and using the estimate \eqref{estimate_es_lg}, we obtain that
\begin{align}
\|\nabla^l g\|_{S^2_1}\leq C_\sigma T^{m-l+\sigma} \quad\text{for } l\leq 2m.\label{dual_es_g1}
\end{align}
  Furthermore, by the argument used in the proof of \eqref{estimate_claim_g}, \eqref{dual_es_g1} can be improved as
\begin{align}
\|\nabla^l g\|_{S^2_1}\leq C_\sigma T^{m-l-\sigma} \quad\text{ for } l<2m. \label{dual_es_g2}
\end{align}
 Now the desired results follow from \eqref{dual_es_g1} and \eqref{dual_es_g2}.
\end{proof}

By the definition of $\chi_T$, we know that $$\sum_{|\alpha|=m}D^\alpha B_{T, ij}^{\alpha\beta}=(-1)^{m+1}T^{-2m}\chi_{T, ij}^\beta.$$
Setting \begin{align}
h_{T, ij}^{\beta}=\sum_{|\alpha|=m}D^\alpha\phi_{T, ij}^{\alpha\beta},\label{dual_def_h}
\end{align}
\eqref{dual_eq_phi} implies that $h_T=(h_{T, ij}^\beta)$ satisfies the equation
$$(-\Delta)^m h_{T, ij}^\beta+T^{-2m}h^\beta_{T, ij}=(-1)^{m+1}T^{-2m}\chi_{T, ij}^\beta \quad \textrm{ in } \mathbb{R}^d.$$
Thanks to Lemma \ref{dual_lem_SR}, we have the following theorem.
\begin{theorem}\label{dual_thm_2} Let $h_T$ be defined as in \eqref{dual_def_h}. Then the following estimate holds with $C$ depending only on $m$ and $d,$
$$T^m\|\nabla^mh_T\|_{S^2_1}\leq C\|\chi_T\|_{S^2_1}.$$
\end{theorem}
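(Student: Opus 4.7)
The plan is to view Theorem \ref{dual_thm_2} as an immediate corollary of Lemma \ref{dual_lem_SR}, applied to the equation satisfied by $h_T$.  Recall that by the very definition \eqref{dual_def_h}, each component $h_{T,ij}^\be$ is a linear combination of $m$-th order derivatives of $\phi_T$, and taking $D^\al$ with $|\al|=m$ on both sides of \eqref{dual_eq_phi} yields the identity
\begin{align*}
(-\Delta)^m h_{T,ij}^\be + T^{-2m} h_{T,ij}^\be = (-1)^{m+1}T^{-2m}\chi_{T,ij}^\be \quad \textrm{in } \R^d,
\end{align*}
which is precisely the equation recorded just before the theorem statement (here one uses the crucial identity $\sum_{|\al|=m} D^\al B_{T,ij}^{\al\be} = (-1)^{m+1} T^{-2m}\chi_{T,ij}^\be$ that was already noted, itself a direct consequence of the equation \eqref{appcor_eq2} defining $\chi_T$).

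Once this equation is in hand, the proof reduces to a single application of the first estimate in Lemma \ref{dual_lem_SR} with $u = h_T$ and right-hand side $f = (-1)^{m+1}T^{-2m}\chi_T$, and with $R = 1$.  This yields
\begin{align*}
T^{-m}\|\nabla^m h_T\|_{S^2_1} \leq C\|f\|_{S^2_1} = C T^{-2m}\|\chi_T\|_{S^2_1},
\end{align*}
and multiplying through by $T^{2m}$ gives the desired bound $T^m\|\nabla^m h_T\|_{S^2_1} \leq C\|\chi_T\|_{S^2_1}$.  There is no real obstacle here: all the work has already been done in Lemma \ref{dual_lem_SR} (where the estimate on the fundamental solution of $(-\Delta)^m + 1$ was used), and in the observation that the operator $h_T \mapsto \sum_{|\al|=m} D^\al \phi_T^{\al\be}$ converts the equation for $\phi_T$ into a constant-coefficient polyharmonic equation with right-hand side $T^{-2m}\chi_T$.
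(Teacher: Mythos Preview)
Your proof is correct and follows exactly the approach of the paper, which simply states ``Thanks to Lemma \ref{dual_lem_SR}, we have the following theorem'' without further detail. You have merely spelled out the one-line application of Lemma \ref{dual_lem_SR} to the equation $(-\Delta)^m h_T + T^{-2m}h_T = (-1)^{m+1}T^{-2m}\chi_T$ that the paper records immediately before the theorem.
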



\section{Convergence rates}\label{sec_conver}

Let $\zeta\in C_c^\infty(B(0, 1))$ be a nonnegative function with $\int_{\mathbb{R}^d}\zeta=1$, and $\zeta_\varepsilon(x)=\varepsilon^{-d}\zeta(x/\varepsilon)$. Define $$S_\varepsilon(f)(x)=\zeta_\varepsilon*f(x)=\int_{\mathbb{R}^d}\zeta_\varepsilon(y)f(x-y)dy. $$
Note that, for $1\leq p\leq \infty$,
\begin{equation}\label{conver_ineq_S1}
\|S_\varepsilon (f)\|_{L^p(\mathbb{R}^d)}\leq \|f\|_{L^p(\mathbb{R}^d)}.
\end{equation}
It is known that (see e.g., \cite{Shen2017_Boundary}) if $f\in L^p(\mathbb{R}^d)$ and $g\in L^p_{\rm{loc, unif}}(\mathbb{R}^d)$, then
\begin{equation}\label{conver_ineq_S2}
\|g(x/\varepsilon)S_\varepsilon(f)\|_{L^p(\mathbb{R}^d)}\leq \sup_{x\in\mathbb{R}^d}\bigg(\fint_{B(x, 1)}|g|^p\bigg)^{1/p}\|f\|_{L^p(\mathbb{R}^d)},
\end{equation}
and for $f\in W^{1, p}(\mathbb{R}^d)$,
\begin{equation}\label{conver_ineq_S3}
\|S_\varepsilon(f)-f\|_{L^p(\mathbb{R}^d)}\leq C\varepsilon\|\nabla f\|_{L^p(\mathbb{R}^d)},
\end{equation}
where $C$ depends only on $d$.

Recall that $\Omega\subset\mathbb{R}^d$ is a bounded Lipschitz domain. Let $\delta\geq 2\varepsilon$ be a small parameter to be determined and $\eta_\delta\in C_c^\infty(\Omega)$ be a cut-off function satisfying that,
\begin{gather*}
  \eta_\delta=0 \text{ in } \Omega_\delta:=\{x\in \Omega: \textrm{dist}(x, \partial\Omega)<\delta\}, \eta_\delta=1 \text{ in } \Omega\setminus\Omega_{2\delta},\\0\leq \eta_\delta \leq 1, |\nabla^l\eta_\delta|\leq C\delta^{-l}, l\leq m. 
\end{gather*}
Define $K_{\varepsilon, \delta}f(x)=S_\varepsilon(\eta_\delta f)(x)$. Then $\mathrm{supp}(K_{\varepsilon, \delta}f)\subset\Omega\setminus\Omega_\varepsilon$, since $\delta\geq 2\varepsilon$.

\begin{lemma}[\cite{Shen2017_Convergence}]\label{conver_lem_Omve}
Let $\Omega$ be a bounded Lipschitz domain. Then for any $u\in H^1(\mathbb{R}^d)$,
$$\int_{\Omega_\varepsilon}|u|^2\leq C\varepsilon\|u\|_{H^1(\mathbb{R}^d)}\|u\|_{L^2(\mathbb{R}^d)},$$
where $C$ depends only on $\Omega$.
\end{lemma}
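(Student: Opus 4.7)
\medskip

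\noindent\textbf{Proof plan for Lemma \ref{conver_lem_Omve}.}

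The plan is to localize near $\pa\Om$ using a finite cover of Lipschitz coordinate cylinders, then in each such cylinder establish the bound by a one-dimensional Fundamental-Theorem-of-Calculus argument combined with optimization in an auxiliary parameter that produces the geometric mean $\|u\|_{H^1}\|u\|_{L^2}$ on the right-hand side.

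Since $\pa\Om$ is compact and Lipschitz, cover it by finitely many open cylinders $V_1,\dots,V_N$ in each of which, after a rigid motion, $\pa\Om$ coincides with the graph $\{x_d=\psi_j(x')\}$ of a Lipschitz function $\psi_j$ and $\Om\cap V_j$ lies in $\{x_d>\psi_j(x')\}$. By choosing $\ve_0>0$ small enough, for $\ve<\ve_0$ each connected component of $\Om_\ve$ that meets $V_j$ is contained in a slab of the form $\{x'\in U_j,\ \psi_j(x')<x_d<\psi_j(x')+c\ve\}$ for some constant $c$ depending only on the Lipschitz constants of the $\psi_j$ (a standard consequence of $\{{\rm dist}(\cdot,\pa\Om)<\ve\}$ being comparable to a normal tubular neighborhood in the Lipschitz category). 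It therefore suffices to bound $\int_{S_\ve^j}|u|^2$ for each such local slab $S_\ve^j$, and sum over $j$.

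Fix such a local patch and drop the subscript $j$. For $(x',x_d)$ with $\psi(x')<x_d<\psi(x')+c\ve$ and any $t\in(0,R)$ (with $R$ a parameter to be optimized, satisfying $R\le{\rm diam}(\R^d)$, which is no constraint), I would write by the Fundamental Theorem of Calculus
\[
u(x',x_d)=u(x',x_d+t)-\int_0^t \pa_d u(x',x_d+s)\,ds,
\]
so Cauchy--Schwarz gives
\[
|u(x',x_d)|^2\le 2|u(x',x_d+t)|^2+2t\int_0^t|\pa_d u(x',x_d+s)|^2\,ds.
\]
Averaging this inequality in $t\in(0,R)$ and integrating $(x',x_d)$ over the local slab, the first term contributes at most $\frac{2c\ve}{R}\|u\|_{L^2(\R^d)}^2$ (by Fubini and translation-invariance of Lebesgue measure) and the second at most $2cR\ve\|\nabla u\|_{L^2(\R^d)}^2$. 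Thus for each local patch
\[
\int_{S_\ve}|u|^2\le C\ve\Bigl(R^{-1}\|u\|_{L^2(\R^d)}^2+R\|\nabla u\|_{L^2(\R^d)}^2\Bigr).
\]

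Finally, optimize the parameter $R$. In the nontrivial case $0<\|\nabla u\|_{L^2}<\infty$, choose $R=\|u\|_{L^2}/\|\nabla u\|_{L^2}$, giving $C\ve\|u\|_{L^2}\|\nabla u\|_{L^2}\le C\ve\|u\|_{L^2}\|u\|_{H^1}$; the degenerate cases ($\nabla u\equiv 0$ or $u\equiv 0$) are immediate. Summing over the finite cover and absorbing constants into $C=C(\Om)$ yields the stated inequality, while the range $\ve\ge\ve_0$ is trivial since $|\Om_\ve|\le|\Om|\le C\ve$ there. The main (mild) obstacle is the geometric normalization in the first step — turning ``distance $<\ve$ to $\pa\Om$'' into a genuine one-sided slab of thickness $\lesssim\ve$ above a Lipschitz graph — which is standard but must be done carefully to ensure the constants depend only on $\Om$.
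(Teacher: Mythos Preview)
The paper does not supply its own proof of this lemma; it is stated with a citation to \cite{Shen2015_Convergence} and used as a black box. Your argument is correct and is essentially the standard proof one finds in that reference: flatten the boundary locally via a finite Lipschitz cover, use the one-dimensional Fundamental-Theorem-of-Calculus/averaging estimate to obtain $\int_{S_\ve}|u|^2\le C\ve\bigl(R^{-1}\|u\|_{L^2}^2+R\|\nabla u\|_{L^2}^2\bigr)$, and then optimize in $R$. The only cosmetic points are that the pointwise FTC identity should, strictly speaking, be applied first to smooth $u$ and extended by density in $H^1(\R^d)$, and that in the degenerate case $\nabla u\equiv 0$ one has $u\equiv 0$ (since constants are not in $L^2(\R^d)$), so the inequality is trivial there as you note.
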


\begin{lemma}\label{conver_lem_om1}
Assume that $\Omega$ is a bounded Lipschitz domain and $A\in APW^2$ satisfies \eqref{intro_cond_bdd}--\eqref{intro_cond_el}. Let $u_\varepsilon$ be the weak solution to Dirichlet problem \eqref{intro_eq1} and $u_0$ be the solution to problem \eqref{intro_eq_hom}. Suppose further $u_0\in H^{m+1}(\Omega; \mathbb{R}^n)$. Set
$$\omega_\varepsilon:=u_\varepsilon-u_0-\varepsilon^m\sum_{|\gamma|=m}\chi_T^\gamma(x/\varepsilon)K_{\varepsilon, \delta}(D^\gamma u_0),$$
with $2\varepsilon\leq \delta\leq2$.
Then for any $\varphi\in H^m_0(\Omega; \mathbb{R}^n)$, we have
\begin{align}
&\quad\bigg|\sum_{|\alpha|=|\beta|=m}\int_\Omega A^{\alpha\beta}(x/\varepsilon)D^\beta \omega_\varepsilon D^\alpha\varphi\bigg|\nonumber\\
&\leq C\Big\{\delta+T^{-m}\|\chi_T\|_{S^2_1}+T^{-2m}\|\phi_T\|_{S^2_1}+\varepsilon\sum_{l<m} \big[\|\nabla^{m+l}\phi_T\|_{S^2_1}+\|\nabla^l\chi_T\|_{S^2_1}\big] \nonumber\\
&\quad+\|\nabla^m\chi_T-\psi\|_{B^2}\Big\}\cdot\big\{\|\nabla^m \varphi\|_{L^2(\Omega)}+\delta^{-1/2}\|\nabla^m \varphi\|_{L^2(\Omega_{3\delta})}\big\}\|\nabla^mu_0\|_{H^1(\Omega)},\label{conver_es_omvp1}
\end{align}
and
\begin{align}
&\quad\bigg|\sum_{|\alpha|=|\beta|=m}\int_\Omega A^{\alpha\beta}(x/\varepsilon)D^\beta \omega_\varepsilon D^\alpha\varphi\bigg|\nonumber\\
&\leq C\Big\{\delta+T^{-m}\|\chi_T\|_{S^2_1}+T^{-2m}\|\phi_T\|_{S^2_1}+\|\nabla^m\chi_T-\psi\|_{B^2}+\varepsilon\sum_{l<m} \big[\|\nabla^{m+l}\phi_T\|_{S^2_1}+\|\nabla^l\chi_T\|_{S^2_1}\big]\Big\}\nonumber\\
&\quad\cdot\{\|\nabla^{m+1} u_0\|_{L^2(\Omega\setminus\Omega_\delta)}+\delta^{-1}\|\nabla^m u_0\|_{L^2(\Omega_{2\delta})}+\|\nabla^m u_0\|_{L^2(\Omega)}\}\|\nabla^m \varphi\|_{L^2(\Omega)},\label{conver_es_omvp2}
\end{align}
where $C$ depends only on $m, n, A$ and $\Omega$.
\end{lemma}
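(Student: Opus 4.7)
The plan is to follow the standard two-scale expansion error analysis, adapted to the $2m$-order setting via the dual corrector $\phi_T$. The starting point is to rewrite the bilinear form using the weak equations for $u_\varepsilon$ and $u_0$: since $\mathcal{L}_\varepsilon u_\varepsilon = f = \mathcal{L}_0 u_0$, we obtain
\[
\sum_{|\alpha|=|\beta|=m}\int_\Omega A^{\alpha\beta}(x/\varepsilon)D^\beta\omega_\varepsilon D^\alpha\varphi = \sum\int_\Omega[\widehat{A}^{\alpha\beta}-A^{\alpha\beta}(x/\varepsilon)]D^\beta u_0\,D^\alpha\varphi - \varepsilon^m\!\sum\int_\Omega A^{\alpha\beta}(x/\varepsilon)D^\beta[\chi_T^\gamma(x/\varepsilon)K_{\varepsilon,\delta}(D^\gamma u_0)]D^\alpha\varphi.
\]
I would then expand the second integrand by the Leibniz rule. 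The top-order term, where all $m$ derivatives in $D^\beta$ land on $\chi_T^\gamma(x/\varepsilon)$, produces $\varepsilon^{-m}(D^\beta\chi_T^\gamma)(x/\varepsilon)K_{\varepsilon,\delta}(D^\gamma u_0)$, which, after multiplying by $A^{\alpha\beta}(x/\varepsilon)$, combines with the first integral through the identity $A^{\alpha\beta}(x/\varepsilon)(D^\beta\chi_T^\gamma)(x/\varepsilon)=B_T^{\alpha\gamma}(x/\varepsilon)-A^{\alpha\gamma}(x/\varepsilon)+\widehat{A}^{\alpha\gamma}$. The $A-\widehat A$ pieces collapse against the first integral up to the error $D^\gamma u_0 - K_{\varepsilon,\delta}(D^\gamma u_0)$, which is supported in $\Omega_{2\delta}$ (contributing the $\delta$-term through Lemma 9.1 and \eqref{conver_ineq_S3}) plus the weak-convergence error $(A^{\alpha\gamma}(x/\varepsilon)-\widehat{A}^{\alpha\gamma})$ tested against $D^\alpha\varphi\cdot(D^\gamma u_0-K_{\varepsilon,\delta}(D^\gamma u_0))$, which I control using \eqref{conver_ineq_S1}--\eqref{conver_ineq_S2}.

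What remains is the truly nonstandard piece, namely $\sum\int_\Omega B_T^{\alpha\gamma}(x/\varepsilon)K_{\varepsilon,\delta}(D^\gamma u_0)D^\alpha\varphi$, together with lower-order Leibniz terms in which at least one derivative falls on $K_{\varepsilon,\delta}(D^\gamma u_0)$. For the main $B_T$-term I would use the dual corrector equation \eqref{dual_eq_phi}, writing $B_T^{\alpha\gamma}=\langle B_T^{\alpha\gamma}\rangle + (-\Delta)^m\phi_T^{\alpha\gamma}+T^{-2m}\phi_T^{\alpha\gamma}$, and then integrate by parts $m$ times (transferring half of the Laplacian powers onto $D^\alpha\varphi\cdot K_{\varepsilon,\delta}(D^\gamma u_0)$ and producing $\varepsilon^m \phi_T(x/\varepsilon)$ factors). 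The $\langle B_T\rangle$ mean-value contribution is exactly $\widehat{A}^{\alpha\gamma}-\langle A^{\alpha\gamma}\rangle-\langle A^{\alpha\beta}D^\beta\chi_T^\gamma\rangle$, whose size is controlled by $\|\nabla^m\chi_T-\psi\|_{B^2}$, producing the corresponding term in \eqref{conver_es_omvp1}--\eqref{conver_es_omvp2}. The remaining $T^{-2m}\phi_T$ piece gives the $T^{-2m}\|\phi_T\|_{S^2_1}$ term, while the Leibniz remainders where $j<m$ derivatives fall on $\chi_T(x/\varepsilon)$ and the others on $K_{\varepsilon,\delta}(D^\gamma u_0)$ get an extra factor of $\varepsilon$ from the rescaling and thus contribute $\varepsilon\|\nabla^l\chi_T\|_{S^2_1}$ with the derivative on $K_{\varepsilon,\delta}(D^\gamma u_0)$ absorbed via \eqref{conver_ineq_S2}. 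Analogous terms from the integration by parts on the dual corrector produce the $\varepsilon\|\nabla^{m+l}\phi_T\|_{S^2_1}$ contributions, and an additional $T^{-m}\|\chi_T\|_{S^2_1}$ appears from the $T^{-2m}\chi_T$ source in the equation for $h_T$ when one integrates by parts further.

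To close, I must separate the two estimates \eqref{conver_es_omvp1} and \eqref{conver_es_omvp2}. For \eqref{conver_es_omvp1} I keep $\varphi$ with $m$ derivatives and exploit that $\nabla K_{\varepsilon,\delta}(D^\gamma u_0)$ involves $\delta^{-1}$ on the strip $\Omega_{3\delta}\setminus\Omega_{\delta}$; applying Lemma \ref{conver_lem_Omve} and Cauchy-Schwarz in the strip yields the factor $\delta^{-1/2}\|\nabla^m\varphi\|_{L^2(\Omega_{3\delta})}$ against $\|\nabla^m u_0\|_{H^1(\Omega)}$. For \eqref{conver_es_omvp2} I instead integrate by parts once more to pass an extra derivative from $K_{\varepsilon,\delta}(D^\gamma u_0)$ onto $u_0$, turning the interior part into $\|\nabla^{m+1}u_0\|_{L^2(\Omega\setminus\Omega_\delta)}$ and the cut-off commutator into $\delta^{-1}\|\nabla^m u_0\|_{L^2(\Omega_{2\delta})}$. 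The main difficulty I anticipate is the bookkeeping of the Leibniz expansion for a $2m$-order system: there are many multi-index partitions of $\beta$, and one has to verify that each residual after the key cancellation is either of the form $\varepsilon\cdot(\text{derivative of }\chi_T\text{ or }\phi_T)$, supported in $\Omega_{3\delta}$, or controlled by $\|\nabla^m\chi_T-\psi\|_{B^2}$ and $T^{-m}\|\chi_T\|_{S^2_1}$. Everything else reduces to routine application of \eqref{conver_ineq_S1}--\eqref{conver_ineq_S3} and Lemma \ref{conver_lem_Omve}.
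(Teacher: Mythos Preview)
Your overall strategy matches the paper's: the same three-term decomposition $I_1+I_2+I_3$ (first-order approximation error, $B_T$-term, Leibniz remainder), the use of the dual corrector equation \eqref{dual_eq_phi}, and the distinction between \eqref{conver_es_omvp1} and \eqref{conver_es_omvp2} through two different treatments of $I_1$. The estimates for $I_1$, $I_3$, the mean-value piece $\langle B_T\rangle$, the $T^{-2m}\phi_T$ piece, and the appearance of $h_T$ are all as you describe.

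There is, however, a genuine gap in your handling of the $((-\Delta)^m\phi_T^{\alpha\beta})(x/\varepsilon)$ part of $I_2$. Your plan to ``integrate by parts $m$ times, transferring half of the Laplacian powers onto $D^\alpha\varphi\cdot K_{\varepsilon,\delta}(D^\gamma u_0)$'' would, after Leibniz expansion, produce terms carrying up to $2m$ derivatives on $\varphi$---illegal, since $\varphi\in H^m_0(\Omega)$ only. The paper circumvents this with the algebraic splitting
\[
((-\Delta)^m\phi_T^{\alpha\beta})(x/\varepsilon)=(-1)^m\!\!\sum_{|\gamma|=m}\!\varepsilon^m D_x^\gamma\bigl[(D^\gamma\phi_T^{\alpha\beta}-D^\alpha\phi_T^{\gamma\beta})(x/\varepsilon)\bigr]+(-1)^m(D^\alpha h_T^\beta)(x/\varepsilon),
\]
where $h_T^\beta=\sum_{|\gamma|=m}D^\gamma\phi_T^{\gamma\beta}$. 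The bracketed factor is \emph{skew-symmetric} in $(\alpha,\gamma)$; when the outer $D_x^\gamma$ is shifted onto $K_{\varepsilon,\delta}(D^\beta u_0)\,D^\alpha\varphi$, the single term in which all of $D^\gamma$ lands on $D^\alpha\varphi$ produces the symmetric expression $D^{\gamma+\alpha}\varphi$ and cancels in the double sum over $\alpha,\gamma$. What survives (the paper's $I_{21}$) has at least one derivative on $K_{\varepsilon,\delta}$ and at most $2m-1$ on $\phi_T$, yielding exactly the $\varepsilon\sum_{l<m}\|\nabla^{m+l}\phi_T\|_{S^2_1}$ contribution. The second piece, $(D^\alpha h_T^\beta)(x/\varepsilon)$, is \emph{not} integrated by parts further: it is estimated directly via $\|\nabla^m h_T\|_{S^2_1}\le CT^{-m}\|\chi_T\|_{S^2_1}$ from Theorem~\ref{dual_thm_2}. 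Without this skew-symmetric device, the bookkeeping you flag as the ``main difficulty'' cannot be closed.
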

\begin{proof}
Using the equations of $u_\varepsilon$ and $u_0$, a direct computation shows that for any $\varphi\in H^m_0(\Omega; \mathbb{R}^n)$,
\begin{align}
&\sum_{|\alpha|=|\beta|=m}\int_\Omega A^{\alpha\beta}(x/\varepsilon)D^\beta \omega_\varepsilon D^\alpha\varphi =\sum_{|\alpha|=|\beta|=m}\int_\Omega (\widehat{A}^{\alpha\beta}-A^{\alpha\beta}(x/\varepsilon))(D^\beta u_0-K_{\varepsilon, \delta}(D^\beta u_0))D^\alpha \varphi\nonumber\\
&\qquad\qquad-\sum_{|\alpha|=|\beta|=m}\int_\Omega B^{\alpha\beta}_T(x/\varepsilon)K_{\varepsilon, \delta}(D^\beta u_0)D^\alpha \varphi\nonumber\\
&\qquad\qquad-\sum_{|\alpha|=|\beta|=m}\int_\Omega A^{\alpha\beta}(x/\varepsilon)\sum_{|\gamma|=m}\sum_{\substack{\beta_1+\beta_2=\beta\\\beta_1<\beta}}\varepsilon^{m-|\beta_1|}D^{\beta_1}\chi_T^\gamma(x/\varepsilon)D^{\beta_2}K_{\varepsilon, \delta}(D^\gamma u_0)D^\alpha \varphi\nonumber\\
&\qquad\doteq I_1+I_2+I_3,\label{conver_iden_I}
\end{align}
where $$B^{\alpha\beta}_T(y):=A^{\alpha\beta}(y)+\sum_{|\gamma|=m}A^{\alpha\gamma}(y)D^\gamma \chi_T^\beta(y)-\widehat{A}^{\alpha\beta},$$
and $\beta_1<\beta$ means $\beta_1$ is a subindex of $\beta$, i.e. there exists a multi-index $\beta'\neq0$ such that $\beta=\beta_1+\beta'$. We will bound these three terms one by one.

First, observe that $$K_{\varepsilon, \delta}(D^\beta u_0)=S_\varepsilon(D^\beta u_0) \text{ in } \Omega\backslash\Omega_{2\delta+\varepsilon}.$$
 Thus,
\begin{align*}
D^\beta u_0-K_{\varepsilon, \delta}(D^\beta u_0)=D^\beta u_0 1_{\Omega_{2\delta+\varepsilon}}+(D^\beta u_0-S_\varepsilon(D^\beta u_0))1_{\Omega\backslash\Omega_{2\delta+\varepsilon}}-S_\varepsilon(\eta_\delta D^\beta u_0)1_{\Omega_{2\delta+\varepsilon}} ~\textrm{ in } \Omega.
\end{align*}
This implies that
\begin{align*}
|I_1|&\leq C\|\nabla^m u_0-S_\varepsilon(\nabla^m u_0)\|_{L^2(\Omega\backslash\Omega_{2\delta+\varepsilon})}\|\nabla^m \varphi\|_{L^2(\Omega)}+C\|\nabla^m u_0\|_{L^2(\Omega_{3\delta})}\|\nabla^m\varphi\|_{L^2(\Omega_{2\delta+\varepsilon})}\nonumber\\
&\leq C\varepsilon\|\nabla^{m+1} u_0\|_{L^2(\Omega)}\|\nabla^m \varphi\|_{L^2(\Omega)}+C\|\nabla^m u_0\|_{L^2(\Omega_{3\delta})}\|\nabla^m\varphi\|_{L^2(\Omega_{2\delta+\varepsilon})},
\end{align*}
where we have used the fact that $$S_\varepsilon(\eta_\delta D^\beta u_0)=S_\varepsilon(\eta_\delta D^\beta u_01_{\Omega_{3\delta}}) \quad\text{ in } \Omega_{2\delta+\varepsilon},$$
as well as inequality \eqref{conver_ineq_S1}  for the first step, and \eqref{conver_ineq_S3} for the last. 

To deal with $I_2$, we deduce by the equation of $\phi_T$,
\begin{align*}
I_2&=-\sum_{|\alpha|=|\beta|=m}\int_\Omega \big[((-\Delta)^m\phi_T^{\alpha\beta})(x/\varepsilon)+T^{-2m}\phi_T^{\alpha\beta}(x/\varepsilon)+\langle B^{\alpha\beta}_T\rangle\big]K_{\varepsilon, \delta}(D^\beta u_0)D^\alpha \varphi\nonumber\\
&=-\sum_{|\alpha|=|\beta|=m}\int_\Omega [(-1)^m\sum_{|\gamma|=m}\varepsilon^mD^\gamma(D^\gamma\phi_T^{\alpha\beta}(x/\varepsilon)-D^\alpha\phi_T^{\gamma\beta}(x/\varepsilon))\nonumber\\
&\quad+(-1)^m \sum_{|\gamma|=m}D^\gamma D^\alpha \phi_T^{\gamma\beta}(x/\varepsilon)+T^{-2m}\phi_T^{\alpha\beta}(x/\varepsilon)+\langle B^{\alpha\beta}_T\rangle]K_{\varepsilon, \delta}(D^\beta u_0)D^\alpha \varphi\nonumber\\
&=(-1)^{m}\varepsilon^m\sum_{|\alpha|=|\beta|=|\gamma|=m}\int_\Omega\sum_{\substack{\gamma_1+\gamma_2=\gamma\\\gamma_1<\gamma}}D^{\gamma_1} (D^\gamma\phi_T^{\alpha\beta}(x/\varepsilon)-D^\alpha\phi_T^{\gamma\beta}(x/\varepsilon))D^{\gamma_2}K_{\varepsilon, \delta}(D^\beta u_0)D^\alpha \varphi\nonumber\\
&\quad+(-1)^{m+1} \sum_{|\alpha|=|\beta|=m}\int_\Omega D^\alpha h_T^\beta(x/\varepsilon)K_{\varepsilon, \delta}(D^\beta u_0)D^\alpha \varphi\nonumber\\&\quad-T^{-2m}\sum_{|\alpha|=|\beta|=m}\int_\Omega\phi_T^{\alpha\beta}(x/\varepsilon)K_{\varepsilon, \delta}(D^\beta u_0)D^\alpha \varphi-\sum_{|\alpha|=|\beta|=m}\int_\Omega\langle B^{\alpha\beta}_T\rangle K_{\varepsilon, \delta}(D^\beta u_0)D^\alpha \varphi\nonumber\\
&\doteq I_{21}+I_{22}+I_{23}+I_{24},
\end{align*}
where $h_T$ is defined by \eqref{dual_def_h} and the fact that $D^\gamma\phi_T^{\alpha\beta}-D^\alpha\phi_T^{\gamma\beta}$ is skew-symmetric with respect to $(\alpha, \gamma)$ is used in the last step. Thanks to \eqref{conver_ineq_S2} and Theorem \ref{dual_thm_2}, $I_{22}$, $I_{23}$ can be bounded by,
\begin{gather*}
|I_{22}|\leq C\|\nabla^m h_T\|_{S^2_1}\|\nabla^m u_0\|_{L^2(\Omega)}\|\nabla^m \varphi\|_{L^2(\Omega)}\leq CT^{-m}\|\chi_T\|_{S^2_1}\|\nabla^m u_0\|_{L^2(\Omega)}\|\nabla^m \varphi\|_{L^2(\Omega)},\\
|I_{23}|\leq CT^{-2m}\|\phi_T\|_{S^2_1}\|\nabla^m u_0\|_{L^2(\Omega)}\|\nabla^m \varphi\|_{L^2(\Omega)}.
\end{gather*}
Since for any multi-index $\gamma$ with $|\gamma|\geq 1$, we have
\begin{align*}
D^{\gamma}K_{\varepsilon, \delta}(D^\beta u_0)=\varepsilon^{-|\gamma|+1}(D^{\gamma'}\zeta)_\varepsilon*D^{\gamma''}(D^\beta u_0\eta_\delta)=\varepsilon^{-|\gamma|+1}(D^{\gamma'}\zeta)_\varepsilon*[D^{\beta+\gamma''} u_0\eta_\delta+D^\beta u_0 D^{\gamma''}\eta_\delta]
\end{align*}
where $\gamma'+\gamma''=\gamma$ and $|\gamma''|=1$. In view of \eqref{conver_ineq_S2} and  $\textrm{supp}[(D^{\gamma'}\zeta)_\varepsilon*(D^\beta u_0 D^{\gamma''}\eta_\delta)]\subset\Omega_{2\delta+\varepsilon}$,  we get
\begin{align*}
|I_{21}|&\leq C\sum_{l<m}\varepsilon\|\nabla^{m+l}\phi_T\|_{S^2_1}\big[\|\nabla^{m+1} u_0\|_{L^2(\Omega\setminus\Omega_{\delta})}\|\nabla^m \varphi\|_{L^2(\Omega)}\\&\quad+\delta^{-1}\|\nabla^m u_0\|_{L^2(\Omega_{2\delta})}\|\nabla^m \varphi\|_{L^2(\Omega_{2\delta+\varepsilon})}\big].
\end{align*}
By the definition of $\widehat{A}$, we have \begin{align*}|\langle B_T^{\alpha\beta}\rangle|=|\langle \sum_{|\gamma|=m}A^{\alpha\gamma}(D^\gamma \chi_T^\beta-\psi^{\gamma\beta})\rangle|\leq C\|\nabla^m\chi_T-\psi\|_{B^2}.\end{align*}
Therefore, $$|I_{24}|\leq C\|\nabla^m\chi_T-\psi\|_{B^2} \|\nabla^m u_0\|_{L^2(\Omega)}\|\nabla^m \varphi\|_{L^2(\Omega)},$$  which, combined with the estimates on $I_{21}, I_{22}, I_{23}$, implies that
\begin{align}\label{conver_es_I2}
&\quad|I_2|\nonumber\\&\leq C\sum_{l<m}\varepsilon\|\nabla^{m+l}\phi_T\|_{S^2_1}\big[\|\nabla^{m+1} u_0\|_{L^2(\Omega\setminus\Omega_\delta)}\|\nabla^m \varphi\|_{L^2(\Omega)}+\delta^{-1}\|\nabla^m u_0\|_{L^2(\Omega_{2\delta})}\|\nabla^m \varphi\|_{L^2(\Omega_{2\delta+\varepsilon})}\big]\nonumber\\
&\quad+C\big[T^{-m}\|\chi_T\|_{S^2_1}+T^{-2m}\|\phi_T\|_{S^2_1}+\|\nabla^m\chi_T-\psi\|_{B^2}\big]\|\nabla^m u_0\|_{L^2(\Omega)}\|\nabla^m \varphi\|_{L^2(\Omega)}.
\end{align}

Finally, $I_3$ is essentially similar to $I_{21}$ and thus can be bounded as follows,
\begin{gather}
|I_3|\leq C\sum_{l<m}\varepsilon\|\nabla^l\chi_T\|_{S^2_1}[\|\nabla^{m+1} u_0\|_{L^2(\Omega\setminus\Omega_\delta)}\|\nabla^m \varphi\|_{L^2(\Omega)}\nonumber\\
+\delta^{-1}\|\nabla^m u_0\|_{L^2(\Omega_{2\delta})}\|\nabla^m \varphi\|_{L^2(\Omega_{2\delta+\varepsilon})}].\label{conver_es_I3}
\end{gather}
Taking the estimates on $I_1$--$I_3$ into \eqref{conver_iden_I} and using Lemma \ref{conver_lem_Omve}, we get \eqref{conver_es_omvp1} immediately.

To see \eqref{conver_es_omvp2}, we give a different estimate on $I_1$. Since  for any $\beta$,
\begin{align*}
D^\beta u_0-K_{\varepsilon, \delta}(D^\beta u_0)=\eta_\delta D^\beta u_0-S_\varepsilon(\eta_\delta D^\beta u_0)+(1-\eta_\delta)D^\beta u_0 \quad \textrm{ in } \Omega,
\end{align*}
it follows that
\begin{align}
|I_1|&\leq C\|\eta_\delta\nabla^m u_0-S_\varepsilon(\eta_\delta\nabla^m u_0)\|_{L^2(\Omega)}\|\nabla^m \varphi\|_{L^2(\Omega)}+C\|\nabla^m u_0\|_{L^2(\Omega_{2\delta})}\|\nabla^m\varphi\|_{L^2(\Omega_{2\delta})}.\label{conver_es_I11}
\end{align}
Thanks to \eqref{conver_ineq_S3}, we have
\begin{align*}
\|\eta_\delta\nabla^m u_0-S_\varepsilon(\eta_\delta\nabla^m u_0)\|_{L^2(\Omega)}&\leq C\varepsilon\|\nabla(\eta_\delta\nabla^m u_0)\|_{L^2(\mathbb{R}^d)}\nonumber\\
&\leq C\varepsilon\|\nabla^{m+1} u_0\|_{L^2(\Omega\setminus\Omega_\delta)}+C\|\nabla^m u_0\|_{L^2(\Omega_{2\delta})},
\end{align*}
which, together with \eqref{conver_es_I11}, gives
\begin{gather}
|I_1|\leq C[\varepsilon\|\nabla^{m+1} u_0\|_{L^2(\Omega\setminus\Omega_\delta)}+C\|\nabla^m u_0\|_{L^2(\Omega_{2\delta})}]\|\nabla^m \varphi\|_{L^2(\Omega)}\nonumber\\
+C\|\nabla^m u_0\|_{L^2(\Omega_{2\delta})}\|\nabla^m\varphi\|_{L^2(\Omega_{2\delta})}.\label{conver_es_I1_2}
\end{gather}
Now \eqref{conver_es_omvp2} follows directly from \eqref{conver_iden_I}, \eqref{conver_es_I2}, \eqref{conver_es_I3} and  \eqref{conver_es_I1_2}.
\end{proof}

\begin{lemma}
Suppose that the assumptions of Lemma \ref{conver_lem_om1} hold and $\varepsilon= T^{-m}$. Then
\begin{align}
\|\omega_\varepsilon\|_{H^m_0(\Omega)}\leq C\delta^{1/2}\|u_0\|_{H^{m+1}(\Omega)},\label{conver_es_Hm1}
\end{align}
with
\begin{align}
\delta=2T^{-m}+T^{-2m}\|\phi_T\|_{S^2_1}+\|\nabla^m\chi_T-\psi\|_{B^2}+T^{-m}\sum_{l<m} \big[\|\nabla^{m+l}\phi_T\|_{S^2_1}+\|\nabla^l\chi_T\|_{S^2_1}\big].\label{conver_def_de}
\end{align}
Suppose in addition $f\in H^{-m+1}(\Omega; \mathbb{R}^n)$, $\dot{g}\in W\!A^{m,2}(\partial\Omega; \mathbb{R}^n)$, and $A=A^*$ if $n\geq2$. Then
\begin{align}
\|\omega_\varepsilon\|_{H^m_0(\Omega)}\leq C\delta^{1/2}\{\|f\|_{H^{-m+1}(\Omega)}+\|\dot{g}\|_{W\!A^{m,2}(\partial\Omega)}\},\label{conver_es_Hm2}
\end{align}
where $\delta$ is given by \eqref{conver_def_de}.
\end{lemma}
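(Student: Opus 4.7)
The key observation is that $\om_\ve \in H^m_0(\Om;\R^n)$: indeed $u_\ve-u_0 \in H^m_0(\Om)$ by the Dirichlet problem, while the corrector piece $\ve^m\chi_T^\ga(x/\ve)K_{\ve,\de}(D^\ga u_0)$ is compactly supported in $\Om$ because $K_{\ve,\de}(\cdot)$ vanishes on $\Om_{\de-\ve}$. Hence $\om_\ve$ is itself an admissible test function, and the bilinear bounds of Lemma \ref{conver_lem_om1} play the role of energy estimates. Throughout we may assume $\de\le 1$, since otherwise both conclusions follow trivially from $\|u_\ve\|_{H^m(\Om)}+\|u_0\|_{H^m(\Om)}\le C\|u_0\|_{H^{m+1}(\Om)}$ (or the corresponding data bound) and a direct estimate of the corrector term using \eqref{conver_ineq_S2}.

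\textbf{Part 1 (estimate \eqref{conver_es_Hm1}).} Substitute $\vp=\om_\ve$ into \eqref{conver_es_omvp1}. For $\ve=T^{-m}$, every term inside the first brace on the right-hand side appears in (or is already accounted for by) a term in the definition \eqref{conver_def_de} of $\de$; in particular $T^{-m}\|\chi_T\|_{S^2_1}$ is the $l=0$ summand of $T^{-m}\sum_{l<m}\|\nabla^l\chi_T\|_{S^2_1}$. Thus that brace is $\le C\de$. The $\vp$-dependent brace is bounded trivially by $(1+\de^{-1/2})\|\nabla^m\om_\ve\|_{L^2(\Om)}\le C\de^{-1/2}\|\nabla^m\om_\ve\|_{L^2(\Om)}$ using $\Om_{3\de}\subset\Om$. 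Coercivity \eqref{intro_cond_el} then gives
\[\mu\|\nabla^m\om_\ve\|_{L^2(\Om)}^2\le\bigg|\sum_{|\al|=|\be|=m}\int_\Om A^{\al\be}(x/\ve)D^\be\om_\ve \,D^\al\om_\ve\bigg|\le C\de^{1/2}\|\nabla^m\om_\ve\|_{L^2(\Om)}\|u_0\|_{H^{m+1}(\Om)}.\]
Dividing and applying the Poincar\'e inequality on $H^m_0(\Om)$ yields \eqref{conver_es_Hm1}.

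\textbf{Part 2 (estimate \eqref{conver_es_Hm2}).} We repeat the argument using \eqref{conver_es_omvp2} in place of \eqref{conver_es_omvp1}. The first brace is again controlled by $C\de$ as above, so with $\vp=\om_\ve$ it remains to show
\[\|\nabla^{m+1}u_0\|_{L^2(\Om\setminus\Om_\de)}+\de^{-1}\|\nabla^m u_0\|_{L^2(\Om_{2\de})}+\|\nabla^m u_0\|_{L^2(\Om)}\le C\de^{-1/2}\bigl\{\|f\|_{H^{-m+1}(\Om)}+\|\dot g\|_{W\!A^{m,2}(\pa\Om)}\bigr\}.\]
Here the self-adjointness hypothesis $A=A^*$ (for $n\ge 2$) enters: it allows us to invoke the non-tangential maximal function and square-function estimates for the constant-coefficient symmetric operator $\LL_0$ on the Lipschitz domain $\Om$ (of Pipher--Verchota--Shen type, extended to higher-order systems), which give
\[\|(\nabla^m u_0)^*\|_{L^2(\pa\Om)}+\bigg(\int_\Om|\nabla^{m+1}u_0|^2\,\mathrm{dist}(x,\pa\Om)\,dx\bigg)^{1/2}\le C\bigl\{\|f\|_{H^{-m+1}(\Om)}+\|\dot g\|_{W\!A^{m,2}(\pa\Om)}\bigr\}.\]
A standard boundary-layer argument (integrating $(\nabla^m u_0)^*$ on $\Om_{2\de}$) together with a Chebyshev-type bound on the weighted $L^2$ gradient estimate yields the displayed inequality. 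Combining with ellipticity exactly as in Part 1 gives \eqref{conver_es_Hm2}.

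\textbf{Main obstacle.} Part 1 is essentially immediate once one observes that the ``bulk'' factor $\de^{-1/2}\|\nabla^m\om_\ve\|_{L^2(\Om_{3\de})}$ admits the free bound by $\|\nabla^m\om_\ve\|_{L^2(\Om)}$, so the product $\de\cdot\de^{-1/2}=\de^{1/2}$ exactly matches the target. The genuine work lies in Part 2: establishing the boundary regularity for the homogenized self-adjoint higher-order system on a Lipschitz domain with data $(f,\dot g)\in H^{-m+1}\times W\!A^{m,2}$, which is precisely the reason the symmetry hypothesis is needed when $n\ge 2$.
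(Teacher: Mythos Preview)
Your argument for Part 1 is exactly the paper's: test with $\vp=\om_\ve$ in \eqref{conver_es_omvp1}, observe that the first brace is bounded by $C\de$ for the specific $\de$ in \eqref{conver_def_de}, bound the second brace crudely by $C\de^{-1/2}\|\nabla^m\om_\ve\|_{L^2(\Om)}$, and conclude by coercivity.

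For Part 2 the overall strategy (test with $\vp=\om_\ve$ in \eqref{conver_es_omvp2} and control the three $u_0$-terms by $C\de^{-1/2}$ times the data) also matches, but your boundary-regularity input is packaged differently. You invoke directly a combined nontangential-maximal and square-function estimate for $u_0$ with \emph{inhomogeneous} right-hand side $f$. That estimate is correct, but it is not a standard black box for higher-order systems in Lipschitz domains: the Pipher--Verchota theory applies to solutions of $\LL_0 u=0$. The paper fills this gap by splitting $u_0=u_1+u_2$, where $u_1$ solves $\LL_0 u_1=f$ on a ball $B\supset\Om$ (so $u_1\in H^{m+1}(B)$ by smooth-domain regularity, yielding the boundary-layer and weighted-gradient bounds for $u_1$ directly), and $u_2=u_0-u_1$ solves $\LL_0 u_2=0$ in $\Om$, to which the nontangential estimate legitimately applies; the weighted $(m{+}1)$-gradient bound for $u_2$ then comes from pointwise interior estimates rather than a square-function theorem. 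Your Chebyshev/boundary-layer deductions from the claimed estimate are fine, so the only missing piece is this decomposition (or a citation establishing the inhomogeneous square-function bound), which is precisely where the symmetry hypothesis enters in the paper as well.
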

\begin{proof}
Note that $\omega_\varepsilon\in H^m_0(\Omega; \mathbb{R}^n)$, and $\delta\leq C(d, m, n, A)$ by \eqref{appcor_conver}, \eqref{estimate_es_chi_S1} and \eqref{dual_es_phi_S11}. Obviously, \eqref{conver_es_Hm1} is a consequence of \eqref{conver_es_omvp1} by letting $\varphi=\omega_\varepsilon$ and $\delta$ be given as \eqref{conver_def_de}.

To prove \eqref{conver_es_Hm2}, we set $\varphi=\omega_\varepsilon$ in \eqref{conver_es_omvp2}, and it suffices to estimate $\|\nabla^{m+1} u_0\|_{L^2(\Omega\setminus\Omega_\delta)}$ and $\|\nabla^m u_0\|_{L^2(\Omega_{2\delta})}$. To this end, we decompose $u_0$ into $u_0=u_1+u_2$, such that $u_1$ solves
\begin{equation*}
\begin{cases}
\mathcal{L}_0 u_1 = f  &\text{ in } B,\\
Tr (D^\gamma u_1)=0  & \text{ on } \partial B, \text{ for  } 0\leq|\gamma|\leq m-1,
\end{cases}
\end{equation*}
where $B$ is a ball satisfying $\Omega\subset B$ and $f$ has been extended to $0$ outside $\Omega$. Following the arguments of Theorem 3.1 in \cite{Niu2018_Convergence}, we can obtain
\begin{gather}
\|\nabla^mu_0\|_{L^2(\Omega_{2\delta})} \leq C \delta^{1/2}\left\{\|\dot{g}\|_{W\!A^{m,2}(\partial\Omega)}+\|f\|_{H^{-m+1}(\Omega)}\right\},\label{conver_es_u0pa}\\
\|\nabla^{m+1} u_0\|_{L^2(\Omega\setminus\Omega_\delta)}\leq C\delta^{-1/2}\left\{\|\dot{g}\|_{W\!A^{m,2}(\partial\Omega)}+\|f\|_{H^{-m+1}(\Omega)}\right\},\nonumber
\end{gather}
which implies the desired result directly. 
\end{proof}

Now we are prepared to prove Theorem \ref{conver_thm_2} through the duality method inspired by  \cite{Suslina2013_Dirichlet,Suslina2013_Neumann}.
\begin{theorem}\label{conver_thm_main}
Suppose that the assumptions of Lemma \ref{conver_lem_om1} hold, $\varepsilon= T^{-m}$ and, in addition, $A=A^*$ if $n\geq2$. Then
\begin{align}
\|u_\varepsilon-u_0\|_{H^{m-1}_0(\Omega)}\leq C\delta\Big[\sum_{l\leq m}\|\nabla^{l}\chi_T\|_{S^2_1}+1\Big]\|u_0\|_{H^{m+1}(\Omega)},\label{conver_es_ratede}
\end{align}
where $\delta$ is given by \eqref{conver_def_de} and $C$ depends only on $m, n, A$ and $\Omega$.
\end{theorem}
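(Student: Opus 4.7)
The plan is to prove \eqref{conver_es_ratede} via a duality argument of Suslina type, leveraging the refined bilinear estimate \eqref{conver_es_omvp1} of Lemma \ref{conver_lem_om1}. Since $u_\ve-u_0\in H^m_0(\Om;\R^n)\subset H^{m-1}_0(\Om;\R^n)$, the $H^{m-1}_0$ norm is realized as
\begin{align*}
\|u_\ve-u_0\|_{H^{m-1}_0(\Om)}=\sup_{F\in H^{-m+1}(\Om;\R^n)\setminus\{0\}}\frac{|\langle F,u_\ve-u_0\rangle|}{\|F\|_{H^{-m+1}(\Om)}},
\end{align*}
so it suffices to bound $|\langle F,u_\ve-u_0\rangle|$ for each admissible $F$. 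Fix such $F$ and introduce the adjoint Dirichlet solutions $\tilde{u}_\ve,\tilde{u}_0\in H^m_0(\Om;\R^n)$ of $\LL^*_\ve\tilde{u}_\ve=F$ and $\LL^*_0\tilde{u}_0=F$. Under the hypothesis $A=A^*$ (or, when $n=1$, by the scalar symmetrization of the constant-coefficient problem as in the proof of Lemma \ref{conver_lem_om1}), the adjoint analogues of all estimates from Sections \ref{sec_estimate}--\ref{sec_dual} remain available with the same constants.

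Writing $u_\ve-u_0=\om_\ve+v_\ve$ with $v_\ve:=\ve^m\sum_{|\ga|=m}\chi_T^\ga(x/\ve)K_{\ve,\de}(D^\ga u_0)$, I would split $\langle F,u_\ve-u_0\rangle=\langle F,\om_\ve\rangle+\langle F,v_\ve\rangle$. For the $v_\ve$-part, a direct Leibniz expansion combined with the multiplier estimate \eqref{conver_ineq_S2} and the mollifier bound $\|D^{\al_2}\zeta_\ve\|_{L^1}\le C\ve^{-|\al_2|}$ yields
\begin{align*}
\|v_\ve\|_{H^{m-1}_0(\Om)}\le C\ve\Big(\sum_{l\le m-1}\|\nabla^l\chi_T\|_{S^2_1}\Big)\|u_0\|_{H^{m+1}(\Om)},
\end{align*}
and since $\ve=T^{-m}\le\de/2$, pairing with $F$ contributes at most $C\de\bigl(\sum_l\|\nabla^l\chi_T\|_{S^2_1}\bigr)\|u_0\|_{H^{m+1}}\|F\|_{H^{-m+1}}$ to the final bound.

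The $\om_\ve$-part is the heart of the argument. Using $\om_\ve\in H^m_0(\Om)$ and the weak form of the adjoint equation gives $\langle F,\om_\ve\rangle=\int_\Om A^{\al\be}(x/\ve)D^\be\om_\ve\,D^\al\tilde{u}_\ve\,dx$, and I would apply \eqref{conver_es_omvp1} with $\vp=\tilde{u}_\ve$. The factor $\|\nabla^m\tilde{u}_\ve\|_{L^2(\Om)}$ is handled by the standard energy estimate $\le C\|F\|_{H^{-m+1}}$, while the critical remaining factor $\de^{-1/2}\|\nabla^m\tilde{u}_\ve\|_{L^2(\Om_{3\de})}$ requires the sharp boundary-layer bound
\begin{align*}
\|\nabla^m\tilde{u}_\ve\|_{L^2(\Om_{3\de})}\le C\de^{1/2}\Big(1+\sum_{l\le m}\|\nabla^l\chi_T\|_{S^2_1}\Big)\|F\|_{H^{-m+1}(\Om)}
\end{align*}
to absorb the $\de^{-1/2}$. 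For this, I decompose $\tilde{u}_\ve=\tilde{u}_0+\tilde{v}_\ve+\tilde{\om}_\ve$ in analogy with Lemma \ref{conver_lem_om1} and estimate the three pieces separately: (i) $\|\nabla^m\tilde{u}_0\|_{L^2(\Om_{3\de})}\le C\de^{1/2}\|F\|_{H^{-m+1}}$ follows from the co-area and nontangential maximal function arguments \eqref{conver_es_paOm_u1}--\eqref{conver_es_WA_u2} applied to the adjoint constant-coefficient problem $\LL^*_0\tilde{u}_0=F$; (ii) $\|\nabla^m\tilde{v}_\ve\|_{L^2(\Om_{3\de})}$ is controlled by Leibniz expansion, \eqref{conver_ineq_S2}, the $\|\nabla^l\chi_T\|_{S^2_1}$ bounds, and the same co-area estimate for $\tilde{u}_0$ (recall $K_{\ve,\de}(\nabla^m\tilde{u}_0)$ vanishes in $\Om_{\de/2}$); (iii) $\|\nabla^m\tilde{\om}_\ve\|_{L^2(\Om)}\le C\de^{1/2}\|F\|_{H^{-m+1}}$ is the adjoint version of \eqref{conver_es_Hm2}.

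The main obstacle is exactly the boundary-layer bound in steps (i)--(iii), which hinges on transferring the Lipschitz-domain regularity (the co-area estimate plus the Pipher--Verchota nontangential maximal function bound) from the primal problem in Lemma \ref{conver_lem_om1} to the adjoint problem; this is where the symmetry assumption $A=A^*$ (and the scalar symmetrization trick when $n=1$) is used in an essential way to ensure $\wh{A}^*$ gives rise to a symmetric constant-coefficient operator. Once this boundary estimate is secured, combining the $\om_\ve$-- and $v_\ve$--contributions and taking the supremum over $F$ produces \eqref{conver_es_ratede}.
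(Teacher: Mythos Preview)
Your proposal is correct and follows essentially the same Suslina-type duality scheme as the paper. Both arguments pair $\om_\ve$ against an auxiliary Dirichlet solution ($\tilde u_\ve$ for you, $v_\ve$ in the paper), decompose that solution into homogenized part, corrector part, and remainder, and rely on the same four inputs: the bilinear bound \eqref{conver_es_omvp1}, the $H^m$ bounds \eqref{conver_es_Hm1}--\eqref{conver_es_Hm2}, the boundary-layer estimate \eqref{conver_es_u0pa} for the constant-coefficient problem, and the secondary cutoff $\wt\eta_\de$ trick to localize the corrector piece.

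The only organizational difference is that the paper splits the test function \emph{before} invoking any bilinear bound, writing $v_\ve=\varpi+v_0+\ve^m\chi_TK_{\ve,\de}(\nabla^m v_0)$ and treating the $\varpi$-piece by a direct Cauchy--Schwarz (i.e.\ $|\mc I_1|\le C\|\nabla^m\om_\ve\|_{L^2}\|\nabla^m\varpi\|_{L^2}\le C\de^{1/2}\cdot\de^{1/2}$), whereas you apply \eqref{conver_es_omvp1} once to the full $\tilde u_\ve$ and then decompose the resulting boundary-layer factor $\|\nabla^m\tilde u_\ve\|_{L^2(\Om_{3\de})}$ into (i)--(iii). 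Both orderings yield the same bound with the same dependence on $\sum_l\|\nabla^l\chi_T\|_{S^2_1}$; your use of the adjoint operator $\LL_\ve^*$ rather than $\LL_\ve$ is the cleaner choice when $n=1$ and $A\neq A^*$.
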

\begin{proof}
Note that
\begin{align}
&\quad\varepsilon^m\Big\|\sum_{|\gamma|=m}\chi_T^\gamma(x/\varepsilon) K_{\varepsilon, \delta}(D^\gamma u_0)\Big\|_{H_0^{m-1}(\Omega)}\nonumber\\
&=\varepsilon^{m}\Big\|\sum_{l_1+l_2\leq m-1}\sum_{|\gamma|=m}\varepsilon^{-l_1-l_2}\nabla^{l_1}\chi_T^\gamma(x/\varepsilon) (\nabla^{l_2}\zeta)_\varepsilon*(D^\gamma u_0\eta_\delta)\Big\|_{L^2(\Omega)}\nonumber\\
&\leq C\varepsilon\sum_{l\leq m-1}\|\nabla^{l}\chi_T\|_{S^2_1}\|\nabla^m u_0\|_{L^2(\Omega)}\nonumber\\
&\leq C\delta\|\nabla^m u_0\|_{L^2(\Omega)},\label{conver_es_KuHm1}
\end{align}
where we have used \eqref{conver_ineq_S2} in the second step. In view of the definition of $\omega_\varepsilon$, it suffices to prove that
\begin{align*}
\|\omega_\varepsilon\|_{H^{m-1}_0(\Omega)}\leq C\delta\|u_0\|_{H^{m+1}(\Omega)}.
\end{align*}
To do this, consider the Dirichlet problem for $\varepsilon\geq 0$
\begin{equation*}
\begin{cases}
\mathcal{L}_\varepsilon v_\varepsilon =F  &\text{ in } \Omega,\\
Tr (D^\gamma v_\varepsilon)=0  & \text{ on } \partial\Omega, \text{ for  } 0\leq|\gamma|\leq m-1,
\end{cases}
\end{equation*}
with $F\in H^{-m+1}(\Omega)$. By setting $$\varpi:=v_\varepsilon-v_0-\varepsilon^m\sum_{|\gamma|=m}\chi_T^\gamma(x/\varepsilon)K_{\varepsilon, \delta}(D^\gamma v_0),$$
we have
\begin{align}
&\langle \omega_\varepsilon, F\rangle_{H_0^{m-1}(\Omega)\times H^{-m+1}(\Omega)}=\sum_{|\alpha|=|\beta|=m}\int_{\Omega} A^{\alpha\beta}(x/\varepsilon)D^\beta \omega_\varepsilon D^\alpha v_\varepsilon\nonumber\\
&=\sum_{|\alpha|=|\beta=m}\int_{\Omega}A^{\alpha\beta}(x/\varepsilon)D^\beta\omega_\varepsilon D^\alpha \varpi_\varepsilon+\sum_{|\alpha|=|\beta|=m}\int_{\Omega}A^{\alpha\beta}(x/\varepsilon)D^\beta\omega_\varepsilon D^\alpha v_0\nonumber\\
&\quad+\sum_{|\alpha|=|\beta|=m}\int_{\Omega}A^{\alpha\beta}(x/\varepsilon)D^\beta\omega_\varepsilon D^\alpha\Big[\varepsilon^m\sum_{|\gamma|=m}\chi_T^\gamma(x/\varepsilon)K_{\varepsilon, \delta}(D^\gamma v_0)\Big]\nonumber\\
&\doteq \mathcal{I}_1+\mathcal{I}_2+\mathcal{I}_3.\label{conver_es_omveF}
\end{align}
Thanks to \eqref{conver_es_Hm1} and \eqref{conver_es_Hm2},
\begin{align*}
|\mathcal{I}_1|\leq C\|\nabla^m \omega_\varepsilon\|_{L^2(\Omega)}\|\nabla^m \varpi_\varepsilon\|_{L^2(\Omega)}\leq C\delta\|u_0\|_{H^{m+1}(\Omega)}\|F\|_{H^{-m+1}(\Omega)}.
\end{align*}
By \eqref{conver_es_omvp1} and \eqref{conver_es_u0pa} for $v_0$, i.e.,
\begin{align}
\|\nabla^m v_0\|_{L^2(\Omega_{3\delta})}\leq C\delta^{1/2}\|F\|_{H^{-m+1}(\Omega)},\label{conver_es_v0pa}
\end{align}
we get
\begin{align*}
|\mathcal{I}_2|\leq C\delta\|u_0\|_{H^{m+1}(\Omega)}\|F\|_{H^{-m+1}(\Omega)}.
\end{align*}
To deal with $\mathcal{I}_3$, similar to \eqref{conver_es_KuHm1}, we deduce from \eqref{conver_es_omvp1} that
\begin{align*}
|\mathcal{I}_3|&\leq C\delta\|u_0\|_{H^{m+1}(\Omega)}\cdot\bigg\{\Big\|\nabla^m\Big[\varepsilon^m\sum_{|\gamma|=m}\chi_T^\gamma(x/\varepsilon)K_{\varepsilon, \delta}(D^\gamma v_0)\Big]\Big\|_{L^2(\Omega)}\\&\qquad+\delta^{-1/2}\Big\|\nabla^m\Big[\varepsilon^m\sum_{|\gamma|=m}\chi_T^\gamma(x/\varepsilon)K_{\varepsilon, \delta}(D^\gamma v_0)\Big]\Big\|_{L^2(\Omega_{3\delta})}\bigg\}\\
&\leq C\delta\|u_0\|_{H^{m+1}(\Omega)}\cdot\bigg\{\sum_{l\leq m}\|\nabla^{l}\chi_T\|_{S^2_1}\|\nabla^m v_0\|_{L^2(\Omega)}+\delta^{-1/2}\sum_{l\leq m}\|\nabla^{l}\chi_T\|_{S^2_1}\|\nabla^m v_0\|_{L^2(\Omega_{4\delta})}\bigg\}\\
&\leq C\delta\Big[\sum_{l\leq m}\|\nabla^{l}\chi_T\|_{S^2_1}\Big]\|u_0\|_{H^{m+1}(\Omega)}\|F\|_{H^{-m+1}(\Omega)},
\end{align*}
where we have used \eqref{conver_es_v0pa} in the last step.
In view of the estimates of $\mathcal{I}_1, \mathcal{I}_2, \mathcal{I}_3$ and \eqref{conver_es_omveF}, we have proved that
\begin{align*}
\Big|\langle \omega_\varepsilon, F\rangle_{H_0^{m-1}(\Omega)\times H^{-m+1}(\Omega)}\Big|\leq C\delta\Big[\sum_{l\leq m}\|\nabla^{l}\chi_T\|_{S^2_1}+1\Big]\|u_0\|_{H^{m+1}(\Omega)}\|F\|_{H^{-m+1}(\Omega)},
\end{align*}
which, by duality, yields the desired estimate \eqref{conver_es_ratede}. The proof is completed.
\end{proof}

\begin{remark}
Without the symmetry assumption on $A$, if $\partial\Omega\in C^{m,1}$, by the standard estimate for higher-order elliptic systems with constant coefficients, we  have
$$\|v_0\|_{H^{m+1}(\Omega)}\leq C\|F\|_{H^{-m+1}(\Omega)}$$
for $v_0$ in the proof of Theorem \ref{conver_thm_main}. Thus, by using a similar duality argument, we obtain
$$\|u_\varepsilon-u_0\|_{H^{m-1}_0(\Omega)}\leq C\{\delta+\delta^*\}\|u_0\|_{H^{m+1}(\Omega)},$$
where $\delta$ is given by \eqref{conver_def_de} and $\delta^*$ is given by \eqref{conver_def_de} with $A$ replaced by $A^*$, and $C$ depends only on $m, n, A$ and $\Omega$. See \cite{Shen2018_Approximate} for the second-order case.
\end{remark}

\begin{proof}[Proof of Theorem \ref{conver_thm_2}]
Note that by Theorem \ref{estimate_thm_1}, $$\sum_{l\leq m}\|\nabla^{l}\chi_T\|_{S^2_1}\leq C_\sigma \sum_{l\leq m-1}\Theta_{k, l, \sigma}(T),$$where $\Theta_{k, l, \sigma}(T)$ denotes the integral in the r.h.s. of \eqref{estimate_es_chi_S1}.
Similarly, thanks to Theorems \ref{estimate_thm_1}, \ref{dual_thm_1} and \ref{dual_thm_2}, we have $$\delta\leq C_\sigma\Big\{\|\nabla^m \chi_T-\psi\|_{B^2}+T^{-m}\sum_{l\leq m-1}\Theta_{k, l, \sigma}(T)\Big\},$$
for any $k\geq 1$ and $\sigma\in(0, 1)$, which, together with Theorem \ref{conver_thm_main}, gives \eqref{conver_es_rate1}.

If condition \eqref{estimate_condition_rho} holds for some $k\geq1$ and $\theta>m$,  by Theorems \ref{estimate_thm_2}, \ref{dual_thm_1} and \ref{dual_thm_2} we have
\begin{align*}
\|\nabla^m h_T\|_{S^2_1}+T^{-2m}\|\phi_T\|_{S^2_1}+\varepsilon\sum_{l<m} \big[\|\nabla^{m+l}\phi_T\|_{S^2_1}+\|\nabla^l\chi_T\|_{S^2_1}\big]\leq C\varepsilon,
\end{align*}
with $\varepsilon=T^{-m}$. Moreover, according to Corollary \ref{estimate_coro_psi},
\begin{align*}
\|\nabla^m\chi_T-\psi\|_{B^2}\leq CT^{-m}.
\end{align*}
As a result, it follows from Theorem \ref{conver_thm_main} that,
\begin{align*}
\|u_\varepsilon-u_0\|_{H^{m-1}_0(\Omega)}\leq C\varepsilon\|u_0\|_{H^{m+1}(\Omega)},
\end{align*}
which is exactly \eqref{intro_es_converrate}. The proof is completed.
\end{proof}

We end up with the proof of Theorem \ref{intro_thm_4}.

\begin{proof}[Proof of Theorem \ref{intro_thm_4}]
For $\dot{g}\in W\!A^{m, 2}(\partial\Omega; \mathbb{R}^n)$ and $f\in H^{-m}(\Omega;\mathbb{R}^n)$, let $u_\varepsilon$ satisfy $$\mathcal{L}^{A}_\varepsilon(u_\varepsilon)=f  \quad\text{ in }  \Omega,  \quad   u_\varepsilon=\dot{g}  \quad\text{ on } \partial\Omega$$ and let $u_0$ be the solution to the corresponding homogenized problem and $u_0\in H^{m+1}(\Omega)$.
Also let $\widetilde{u}_\varepsilon$ and $\widetilde{u}_0$ be the solutions to   $$\mathcal{L}^{\widetilde{A}}_\varepsilon(\widetilde{u}_\varepsilon)=f  \quad\text{ in }  \Omega,  \quad   \widetilde{u}_\varepsilon=\dot{g}  \quad\text{ on } \partial\Omega$$   and its homogenized problem, respectively.

Obviously, \eqref{pertur_cond_pertur} implies that $\|A-\widetilde{A}\|_{B^p}=0$, i.e., $A$ and $\widetilde{A}$ are in the same equivalence class, which gives $\widehat{A}=\widehat{\widetilde{A}}$ and $u_0=\widetilde{u}_0$.
Set $\omega_\varepsilon:=u_\varepsilon-\widetilde{u}_\varepsilon$. Then $\omega_\varepsilon\in H^{m}_0(\Omega)$ satisfies
\begin{align*}
\mathcal{L}^A_\varepsilon\omega_\varepsilon=(-1)^{m+1}\sum_{|\alpha|=|\beta|=m} D^\alpha[(A^{\alpha\beta}(\frac{x}{\varepsilon})-\widetilde{A}^{\alpha\beta}(\frac{x}{\varepsilon}))D^\beta\widetilde{u}_\varepsilon] \quad\textrm{ in }~\Omega.
\end{align*}
For $F\in H^{-m+1}(\Omega)$, let $v_\varepsilon\in H^m_0(\Omega)$ satisfy $\mathcal{L}^{A}_\varepsilon(v_\varepsilon)=F$ in $\Omega$. By the $W^{m, p}$ estimate for higher-order elliptic systems (\cite{Dong2011_Higher}), there exists a constant $q>2$, depending only on $m, n, \Omega, \mu$, such that,
\begin{align}\|\nabla^m v_\varepsilon\|_{L^q(\Omega)}\leq C\|F\|_{H^{-m+1}(\Omega)},\label{pertur_es_q}\end{align}where $C$ depends only on $m, n, \Omega, \mu$.  Setting $1/p=1/2-1/q$ and choosing $R$ such that  $\Omega\subset B(0, R)$, we  deduce that
\begin{align}
\langle\omega_\varepsilon, F\rangle_{H^{m-1}_0(\Omega)\times H^{-m+1}(\Omega)}&\leq C\|(A(\frac{x}{\varepsilon})-\widetilde{A}(\frac{x}{\varepsilon}))\nabla^mv_\varepsilon\|_{L^2(\Omega)}\|\nabla^m\widetilde{u}_\varepsilon\|_{L^2(\Omega)}\nonumber\\
&\leq C\|A(\frac{x}{\varepsilon})-\widetilde{A}(\frac{x}{\varepsilon})\|_{L^p(\Omega)}\|F\|_{H^{-m+1}(\Omega)}\|\nabla^m\widetilde{u}_\varepsilon\|_{L^2(\Omega)}\nonumber\\
&\leq C\bigg(\fint_{B(0, R/\varepsilon)}|A-\widetilde{A}|^p dx\bigg)^{{1/p}}\|F\|_{H^{-m+1}(\Omega)}\|\nabla^m\widetilde{u}_\varepsilon\|_{L^2(\Omega)},\label{pertur_es_dual}
\end{align}
where \eqref{pertur_es_q} is used in the second step.
Moreover, since $\|\nabla^m\widetilde{u}_\varepsilon\|_{L^2(\Omega)}\leq C\|\nabla^m u_0\|_{L^2(\Omega)}$ with $C$ depending only on $d, m, n, \mu$, it follows from \eqref{pertur_es_dual} that
\begin{align}\label{niu938}
\|\omega_\varepsilon\|_{H^{m-1}_0(\Omega)}\leq C\bigg(\fint_{B(0, R/\varepsilon)}|A-\widetilde{A}|^p dx\bigg)^{{1/p}}\|\nabla^m u_0\|_{L^2(\Omega)}\leq C\varepsilon\|\nabla^m u_0\|_{L^2(\Omega)},
\end{align}
where $C$ depends only on $m, n, \Omega, \mu$.  Since $u_0=\widetilde{u}_0$, estimate \eqref{niu938}, together with the condition that $A$ has the $O(\varepsilon)$-convergence property, yields the desire property for $\widetilde{A}$.
\end{proof}


\section*{Acknowledgements}
This work was completed under the supervision of Professor Zhongwei Shen, to whom the authors are much obliged for the guidance. Special thanks also go to Professor  Russell Brown and University of Kentucky for the warm hospitality and support.
\bibliographystyle{amsplain}
\bibliography{bib}

\vspace{0.5cm}

\noindent Yao Xu \\
Institute of Mathematics, Academy of Mathematics and Systems Science, Chinese Academy of Sciences, Beijing, 100190, CHINA\\
E-mail: xuyao89@gmail.com\\

\noindent Weisheng Niu \\
School of Mathematical Science, Anhui University\\
Hefei, 230601, CHINA\\
E-mail: weisheng.niu@gmail.com\\

\noindent\today

\end{document}